\thanks{}
\theoremstyle{plain}
\newtheorem{Thm}{Theorem}[section]
\theoremstyle{definition}
\theoremstyle{plain}
\newtheorem{thm}[Thm]{Theorem}
\newtheorem{lem}[Thm]{Lemma}
\newtheorem{cor}[Thm]{Corollary}
\newtheorem{prop}[Thm]{Proposition}
\theoremstyle{definition}
\newtheorem{defn}[Thm]{Definition}
\newtheorem{eg}[Thm]{Example}
\newtheorem{rmk}[Thm]{Remark}
\newenvironment{customthm}[1]
{\innercustomthm}
{\endinnercustomthm}
\newcommand{\B}{B}
\newcommand{\A}{A}
\newcommand{\J}{J}
\newcommand{\K}{\mathcal{K}}
\newcommand{\D}{D}
\newcommand{\Ch}{D}
\newcommand{\Zh}{\mathcal{Z}}
\newcommand{\E}{E}
\newcommand{\Oh}{\mathcal{O}}
\newcommand{\T}{{\mathbb T}}
\newcommand{\R}{{\mathbb R}}
\newcommand{\N}{{\mathbb N}}
\newcommand{\Z}{{\mathbb Z}}
\newcommand{\C}{{\mathbb C}}
\newcommand{\Q}{{\mathbb Q}}
\newcommand{\la}{{\langle}}
\newcommand{\ra}{{\rangle}}
\newcommand{\aut}{\mathrm{Aut}}
\newcommand{\supp}{\mathrm{supp}}
\newcommand{\eps}{\varepsilon}
\numberwithin{equation}{section}
\newcommand{\id}{\mathrm{id}}
\newcommand{\halpha}{\widehat{\alpha}}
\newcommand{\calpha}{\widehat{\alpha}}
\newcommand{\tih}{\widetilde {h}}
\newcommand\set[1]{\left\{#1\right\}}  
\newcommand\mset[1]{\left\{\!\!\left\{#1\right\}\!\!\right\}}
\newcommand{\CA}[0]{\mathcal{A}} \newcommand{\CB}[0]{\mathcal{B}}
\newcommand{\CC}[0]{\mathcal{C}} \newcommand{\CD}[0]{\mathcal{D}}
\newcommand{\CG}[0]{\mathcal{G}} \newcommand{\CH}[0]{\mathcal{H}}
\newcommand{\CO}[0]{\mathcal{O}} 
\newcommand{\CQ}[0]{\mathcal{Q}} \newcommand{\CR}[0]{\mathcal{R}}
 \newcommand{\CT}[0]{\mathcal{T}}
\newcommand{\CU}[0]{\mathcal{U}}
\newcommand{\Ra}[0]{\Rightarrow}
\newcommand{\La}[0]{\Leftarrow}
\newcommand{\LRa}[0]{\Leftrightarrow}
\newcommand{\quer}[0]{\overline}
\newcommand{\eins}[0]{\mathbf{1}}			
\newcommand{\diag}[0]{\operatorname{diag}}
\newcommand{\ad}[0]{\operatorname{Ad}}
\newcommand{\ev}[0]{\operatorname{ev}}
\newcommand{\fin}[0]{{\subset\!\!\!\subset}}
\newcommand{\diam}[0]{\operatorname{diam}}
\newcommand{\Hom}[0]{\operatorname{Hom}}
\newcommand{\dst}[0]{\displaystyle}
\newcommand{\spp}[0]{\operatorname{supp}}
\newcommand{\lsc}[0]{\operatorname{Lsc}}
\newcommand{\del}[0]{\partial}
\newcommand{\GU}[0]{\CG^{(0)}}
\newtheorem{theorem}[Thm]{Theorem}
\theoremstyle{definition}
\numberwithin{equation}{Thm}
\title[Boundary actions]{Boundary actions of $\cat$ spaces and their $C^*$-algebras}	
\begin{document}
\global\long\def\floorstar#1{\lfloor#1\rfloor}
\global\long\def\ceilstar#1{\lceil#1\rceil}	

\global\long\def\B{B}
\global\long\def\A{A}
\global\long\def\J{J}
\global\long\def\K{\mathcal{K}}
\global\long\def\D{D}
\global\long\def\Ch{D}
\global\long\def\Zh{\mathcal{Z}}
\global\long\def\E{E}
\global\long\def\Oh{\mathcal{O}}

\global\long\def\T{{\mathbb{T}}}
\global\long\def\BR{{\mathbb{R}}}
\global\long\def\N{{\mathbb{N}}}
\global\long\def\Z{{\mathbb{Z}}}
\global\long\def\C{{\mathbb{C}}}
\global\long\def\Q{{\mathbb{Q}}}

\global\long\def\aut{\mathrm{Aut}}
\global\long\def\supp{\mathrm{supp}}

\global\long\def\eps{\varepsilon}

\global\long\def\id{\mathrm{id}}

\global\long\def\halpha{\widehat{\alpha}}
\global\long\def\calpha{\widehat{\alpha}}

\global\long\def\tih{\widetilde{h}}

\global\long\def\opFol{\operatorname{F{\o}l}}

\global\long\def\opRange{\operatorname{Range}}

\global\long\def\opIso{\operatorname{Iso}}

\global\long\def\dimnuc{\dim_{\operatorname{nuc}}}

\global\long\def\set#1{\left\{  #1\right\}  }


\global\long\def\mset#1{\left\{  \!\!\left\{  #1\right\}  \!\!\right\}  }

\global\long\def\Ra{\Rightarrow}
\global\long\def\La{\Leftarrow}
\global\long\def\LRa{\Leftrightarrow}

\global\long\def\quer{\overline{}}
\global\long\def\eins{\mathbf{1}}
\global\long\def\diag{\operatorname{diag}}
\global\long\def\ad{\operatorname{Ad}}
\global\long\def\ev{\operatorname{ev}}
\global\long\def\fin{{\subset\!\!\!\subset}}
\global\long\def\diam{\operatorname{diam}}
\global\long\def\Hom{\operatorname{Hom}}
\global\long\def\dst{{\displaystyle }}
\global\long\def\spp{\operatorname{supp}}
\global\long\def\spo{\operatorname{supp}_{o}}
\global\long\def\del{\partial}
\global\long\def\lsc{\operatorname{Lsc}}
\global\long\def\GU{\CG^{(0)}}
\global\long\def\HU{\CH^{(0)}}
\global\long\def\AU{\CA^{(0)}}
\global\long\def\BU{\CB^{(0)}}
\global\long\def\CUU{\CC^{(0)}}
\global\long\def\DU{\CD^{(0)}}
\global\long\def\QU{\CQ^{(0)}}
\global\long\def\TU{\CT^{(0)}}
\global\long\def\CUUU{\CC'{}^{(0)}}
\global\long\def\dom{\operatorname{dom}}
\global\long\def\ran{\operatorname{ran}}
\global\long\def\AUl{(\CA^{l})^{(0)}}
\global\long\def\BUl{(B^{l})^{(0)}}
\global\long\def\HUp{(\CH^{p})^{(0)}}
\global\long\def\sym{\operatorname{Sym}}
\global\long\def\stab{\operatorname{Stab}}
\newcommand{\cat}[0]{\operatorname{CAT}(0)}
\global\long\def\properlength{proper}
\global\long\def\deg{\operatorname{deg}}
\global\long\def\isom{\operatorname{Isom}}
\global\long\def\interior#1{#1^{\operatorname{o}}}
	\global\long\def\ln{\operatorname{ln}}

\author{Xin Ma}
\email{xma1@memphis.edu}
\address{Department of Mathematics,
	University of Memphis,
	Memphis, TN, 38152}

\subjclass[2010]{46L35, 37B05, 57S30, 20F36, 20F55, 20E08}
\keywords{Boundary actions, $\cat$ cube complexes, Bass-Serre theory, Pure infiniteness}

\date{Feb 27, 2022}

\author{Daxun Wang}
\email{daxunwan@buffalo.edu}
	\address{Department of Mathematics,
		Sate University of New York at Buffalo,
		Buffalo, NY, 14260}
\begin{abstract}
	In this paper, we study boundary actions of $\cat$ spaces from a point of view of topological dynamics and $C^*$-algebras. First, we investigate the actions of right-angled Coexter groups and right-angled Artin groups with finite defining graphs on the visual boundaries and the Nevo-Sageev boundaries of their natural assigned $\cat$ cube complexes. In particular, we establish (strongly) pure infiniteness results for reduced crossed product $C^*$-algebras of these actions through investigating the corresponding $\cat$ cube complexes and establishing necessary dynamical properties such as minimality, topological freeness and pure infiniteness of the actions. In addition, we study actions of fundamental groups of graphs of groups on the visual boundaries of their Bass-Serre trees. We show that the existence of repeatable paths essentially implies that the action is $2$-filling, from which, we also obtain a large class of unital Kirchberg algebras. Furthermore, our result also provides a new method in identifying $C^*$-simple generalized Baumslag-Solitar groups.  The examples of groups obtained from our method have $n$-paradoxical towers in the sense of \cite{G-G-K-N}. This class particularly contains non-degenerated free products, Baumslag-Solitar groups and fundamental groups of $n$-circles or wedge sums of $n$-circles. 
\end{abstract}
\maketitle

\section{Introduction}
Boundaries of certain $\cat$ spaces and group actions on them play important roles in the study of groups, geometry and topology. Motivating examples include the Gromov boundaries of hyperbolic spaces as well as hyperbolic groups acting on their Gromov boundaries. For $\cat$ spaces beyond the hyperbolic world, there are many boundaries with similar flavor that one may consider, as we will list below. Suppose a group $G$ acts on a $\cat$ space $X$ by isometry. Then under some natural assumptions, there is an induced topological action of $G$ on these boundaries. Dynamical properties on the boundaries have been proved to play a significant role in investigating many useful properties of the acting groups or the spaces themselves such as the Tits alternative,  Yu's Property A, a-T-amenability, and thus leads to many other applications in topology.

On the other hand, as one of our main motivation in the current paper, reduced crossed products of the form $C(X)\rtimes_r G$ arising from topological dynamical systems, say from $(X, G,
\alpha)$ for a countable discrete group $G$, a locally compact Hausdorff space $X$ and a continuous action
$\alpha$, have long been an important source of examples and motivation for the study of $C^\ast$-algebras. Our one goal in this paper is to continue the study of the first author in \cite{M2} and \cite{M} to investigate the pure infiniteness of reduced crossed product $C^*$-algebras. See also \cite{A-D}, \cite{L-S}, \cite{J-R}, \cite{R-S} and a very recent progress \cite{G-G-K-N} for more information in this direction.

\textit{Pure infiniteness} of a $C^*$-algebra, reflecting a kind of paradoxical nature, is an important regularity property of $C^*$-algebras. It has many characterizations (see \cite{K-Rord}, \cite{Kir-Rord} and \cite{P-R}) and plays an essential role in the celebrated classification theorem by Kirchberg and Phillips (see. e.g., \cite{Phillips}). On the other hand, beyond the classification theorem, the property of pure infiniteness, and its variant stronge pure infiniteness have their own interest to be studied as well.

Therefore, in this paper, we study boundary actions of certain $\cat$ spaces from a topological dynamical and operator algebraic viewpoints to determine when the reduced crossed product $C^*$-algebras of the boundary actions are purely infinite. This study will yield new and interesting examples belonging to the class of strongly purely infinite $C^*$-algebras. 

Our first motivating examples are  actions of certain non-amenable groups on the visual boundaries that have a strong paradoxical flavor. For example, as a generalization of the hyperbolic case, if a group $G$ acts on a proper $\cat$ space $X$ by isometry in a non-elementary way (see \cite{Ham}), then any rank-one element $g$ in $G$ performs the classical North-South dynamics on the visual boundary $\partial X$(see \cite{Ham}), i.e., there exist \textit{attracting} and \textit{repelling} fixed points of $g$ such that the positive powers of $g$ contract the whole boundary except the repelling fixed point into the attracting points. This strong contracting dynamics implies that the action of $G$ on $\partial X$ has dynamical comparison in the sense of \cite[Definition 3.2]{D} and has no $G$-invariant measures. This condition of comparison is also equivalent to the so-called pure infiniteness of the action in the sense of \cite[Definition 3.5]{M}. Moreover, it was shown in \cite{Ham} that such an action is also minimal. Then under the assumption that the action is topologically free, its reduced crossed product is simple and purely infinite. See e.g. \cite{M2}. 

Enlarging our scope, observe that many examples of purely infinite reduced crossed product appeared in the literature arise from boundary actions that have similar strong contracting dynamics. This implies that  it is worth investigating boundary actions of $\cat$ spaces in a more systematical way. The first step is to determine which boundary one should look at because there are a lot of candidates beyond the hyperbolic world. We enumerate several here and warn that this is not a complete list at all.  Let $X$ be a $\cat$ space, one may consider
\begin{enumerate}
	\item the visual boundary $\partial X$,
	\item the horofunction boundary,
	\item Contracting boundary or equivalently in the $\cat$ case, the Morse boundary (see \cite{CS1}, \cite{Cor} and \cite{C-M}).
	\item $\kappa$-Morse boundary (see \cite{Q-R}).
\end{enumerate} 
In this list, the visual boundary might be the most ``transparent'' compact Hausdorff boundary associated to a $\cat$ space. Similarly to the Gromov boundary, it contains  the equivalence class of geodesics that are almost in the same direction. The horofunction boundary are equivalent to the visual boundary. See \cite{B-H} for more information. However, the visual boundary is not a quasi-isometric invariant and that is one of the motivation why the boundaries in (3) and (4) above are invented. However, the topologies on contracting and ($\kappa$-)Morse boundaries are no longer compact if the space is not hyperbolic. See \cite[Theorem 10.1]{C-M}, and \cite[Propsoition 6.6]{Q-R} and \cite[Theorem 1.1]{C-C-S}. It also seems unknown whether there exists a non-hyperbolic $\cat$ space with locally compact contracting or $\kappa$-Morse boundary. Therefore, boundaries in (3) and (4) are out of our scope at this moment because topological spaces considered for $C^*$-algebras are usually assumed to be locally compact Hausdorff.

Nevertheless, If we consider additional structures on the $\cat$ spaces, e.g.  cube complexes, we have more boundaries at hands which are of combinatorial flavors,
\begin{enumerate}
	\item[(5)] Roller boundary $\CR(X)$ (see \cite{Roller}) and
	\item[(6)] Nevo-Sageev boundary $B(X)$ as a subset of $\CR(X)$ (see \cite{N-S}).
\end{enumerate}

We remark that the Roller boundary can be identified with the horofunction boundary of the $1$-skeleton $X^1$ of the complex $X$ with $\ell_1$-metric. Based on these discussions, we mainly consider the visual boundary, the Roller boundary and the Nevo-Sageev boundary in this paper and consider the actions of right-angled Coxeter groups (RACGs) and right-angled Artin groups (RAAGs) as well as the actions of fundamental groups of graph of groups on the visual boundaries of their Bass-Serre trees in this paper.

Our first contribution in this paper is to detect the structure of the reduced crossed products of actions of RACGs and RAAGs on the boundaries above based only on the information of their defining graphs. The main outcome are (strongly) pure infiniteness results for the reduced crossed products. To establish this, our main method is to verify necessary dynamical properties such as minimality, topological freeness and  pure infiniteness of the actions (see Theorem \ref{thm: pure inf main} below) as well as using the algebraic structure of tensor products. Note that the above dynamical conditions are in general not easy to establish. Nevertheless, actually as our another motivation to consider boundary actions, in some cases, our actions are boundary actions in the sense of Furstenberg and thus are topologically free provided that the acting groups are $C^*$-simple and  there exists points with amenable stabilizers (see \cite{B-K-K-O}).

As a result, we have the following main theorems on $\cat$ cube complexes. The involved notions will be introduced in next sections. Let $\Gamma=(V, E)$ be a finite simple graph. For simplicity, we denote by $G_\Gamma$ the RACG $W_\Gamma$ or the RAAG $A_\Gamma$ and $X_\Gamma$ the corresponding Davis complex $\Sigma_\Gamma$ or the universal cover of the Salvetti complex $ \tilde{S_\Gamma}$, respectively. There is a canonical way to write all $G_\Gamma$ into direct products of special subgroups by using joins of the defining graph $\Gamma$ with a form $G_\Gamma=G_{\Gamma_1}\times\dots G_{\Gamma_m}$. Similarly, using the joins of $\Gamma$, one can also decompose $X_\Gamma$ to be
$X_\Gamma=X_{\Gamma_1}\times\dots\times X_{\Gamma_m}$ in a corresponding way.
 \begin{customthm}{A}(Theorem \ref{thm: main two})
		Let $G_\Gamma\curvearrowright X_\Gamma$ where $X_\Gamma$ is essential and has at least one non-Euclidean irreducible factor $X_{\Gamma_i}$ in the decomposition above.  Suppose 
		\begin{enumerate}
			\item  $G_\Gamma=W_\Gamma$ has no special subgroup $D_\infty$ in the  RACG case;
			\item  $G_\Gamma=A_\Gamma$ has no special subgroup $\Z$  in the RAAG case.
		\end{enumerate}
		Then the reduced crossed product $A=C(B(X_\Gamma))\rtimes_r G_\Gamma$ of the induced action on the Nevo-Sageev boundary $\beta: G_\Gamma\curvearrowright B(X_\Gamma)$ is unital simple separable and purely infinite. In addition, in the RACG case, $A$ is nuclear as well and thus a Kirchberg algebra satisfying the UCT.
	\end{customthm}
We remark that the absence of a special subgroup $D_\infty$ or $\Z$ for $G_\Gamma$ is equivalent to that there is no factor of $D_\infty$ or $\Z$, which is called Euclidean factors, in the direct product decomposition $G_\Gamma=G_{\Gamma_1}\times\dots G_{\Gamma_m}$ above. On the other hand, such a Euclidean factor $D_\infty$ or $\Z$, has its own action on its own Nevo-Sageev boundary, which is exactly a set consisting two points, denoted by $\{\breve{0}, \breve{1}\}$ in this paper. Moreover, in the RACG case, $D_\infty$ acts on the set by alternating while in the RAAG case, $\Z$ acts on it trivially. See more in Section 4.

Now suppose $G_\Gamma$ has such Euclidean factors. One can still verify pure infiniteness in the sense of Definition \ref{defn: comparison-pure infiniteness}  of the action $\beta$ on $B(X_\Gamma)$ by using Proposition \ref{prop: p.i. in product with trivial action}. To observe more, in the RACG case, $\beta$ is still minimal and in the RAAG case, $\beta$ has finitely many closed invariant sets (see Section 4). However, it follows from Proposition \ref{prop: p.i. in product with trivial action} that $\beta$ is no longer topologically free and in particular is not essentially free so that Theorem \ref{thm: pure inf main} cannot be applied here to establish the (strongly) pure infiniteness of the crossed product.  From the viewpoint of $C^*$-algebras, after we add the Euclidean factors $D_\infty$ or $\Z$ in the group, the Nevo-Sageev boundary increases in a way that the crossed product do not have a good ideal structure any more. Nevertheless, because the Nevo-Sageev boundary preserves the product structure, we still know what does $C(B(X_\Gamma))\rtimes_rG_\Gamma$ look like and are able to show they are strongly purely infinite. First it follows from Theorem \ref{thm: decompostion} that any $G_\Gamma$ can be decomposed into a direct product $G_\Gamma=G_{\Gamma'}\times H^n$, for some $n\in N$, where $\Gamma'$ is a subgraph of $\Gamma$ and $H$ is the Euclidean factor, i.e., $H=D_\infty$ if $G=W_\Gamma$ and $H=\Z$ if $G=A_\Gamma$. 

\begin{cor}(Corollary \ref{cor: full result on B(X)})
Let $G_\Gamma=G_{\Gamma'}\times H^n$ be the decomposition mentioned above. Write $A=C(\partial X_\Gamma)\rtimes_r G_\Gamma$. Then one has
\begin{enumerate}
\item In the RACG case,  one has $A=(C(\partial X_{\Gamma'})\rtimes_r G_{\Gamma'})\otimes \bigotimes_{i=1}^n (C(\{\breve{0}, \breve{1}\})\rtimes_r D_\infty)$, where the action of $D_\infty$ on $\{\breve{0}, \breve{1}\}$ is by alternating and $C(X_{\Gamma'})\rtimes_r G_{\Gamma'}$ is unital simple separable purely infinite.
\item In the RAAG case, one has $A=(C(\partial X_{\Gamma'})\rtimes_r G_{\Gamma'})\otimes C(\{\breve{0}, \breve{1}\}^n)\otimes C(\T^n)$ in which $\T$ is the unit circle and $C(X_{\Gamma'})\rtimes_r G_{\Gamma'}$ is unital simple separable purely infinite.
\end{enumerate}
In the RACG case, $A$ is $\CO_\infty$-stable and actually, in either case, $A$ is strongly purely infinite.
\end{cor}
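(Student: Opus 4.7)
The strategy is to lift the algebraic factorization $G_\Gamma = G_{\Gamma'}\times H^n$ from Theorem~\ref{thm: decompostion} to an equivariant product decomposition of the Nevo-Sageev boundary action, and then transfer it to the reduced crossed product level as a tensor splitting. The non-Euclidean part will be handled by invoking Theorem~\ref{thm: main two}, while the Euclidean factors $H\in\{D_\infty,\Z\}$ are handled directly: the cube complex $X_H$ is isomorphic to the real line, so its Nevo-Sageev boundary is the two-point set $\{\breve{0},\breve{1}\}$, on which $D_\infty$ acts by alternation and $\Z$ acts trivially (cf.\ Section~4).

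The key geometric step is to establish the $(G_{\Gamma'}\times H^n)$-equivariant identification $B(X_\Gamma)\cong B(X_{\Gamma'})\times\{\breve{0},\breve{1}\}^n$. This should follow from the fact that halfspaces in a product of cube complexes split as pullbacks from the factors, so ultrafilters on the halfspace poset of the product correspond to tuples of ultrafilters on the factors, and the ``non-terminating'' condition cutting out the Nevo-Sageev boundary inside the Roller boundary is preserved under this correspondence. Since each $H$ is amenable (so $C(\{\breve{0},\breve{1}\})\rtimes_r H$ is nuclear) and $G_{\Gamma'}$ is exact (as a linear group), the reduced crossed product of the resulting product action splits as a tensor product
\[
C(B(X_\Gamma))\rtimes_r G_\Gamma\;\cong\;\bigl(C(B(X_{\Gamma'}))\rtimes_r G_{\Gamma'}\bigr)\otimes\bigotimes_{i=1}^n\bigl(C(\{\breve{0},\breve{1}\})\rtimes_r H\bigr).
\]
In the RACG case this is precisely (1). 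In the RAAG case, $\Z$ acts trivially on $\{\breve{0},\breve{1}\}$, hence each Euclidean factor collapses to $C(\{\breve{0},\breve{1}\})\otimes C^*_r(\Z)\cong C(\{\breve{0},\breve{1}\})\otimes C(\T)$, and collecting the $n$ copies yields $C(\{\breve{0},\breve{1}\}^n)\otimes C(\T^n)$, which is (2).

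By construction $\Gamma'$ contains no Euclidean join-factor, so Theorem~\ref{thm: main two} applies to $G_{\Gamma'}\curvearrowright X_{\Gamma'}$ and yields that $C(B(X_{\Gamma'}))\rtimes_r G_{\Gamma'}$ is unital, simple, separable and purely infinite, and additionally nuclear (hence a Kirchberg algebra satisfying the UCT) in the RACG case. For the final regularity conclusions we invoke two standard Kirchberg--R\o rdam permanence facts: (a) a simple purely infinite $C^*$-algebra is strongly purely infinite; (b) if $B$ is strongly purely infinite then $B\otimes_{\min} C$ is strongly purely infinite for any $C^*$-algebra $C$. Combined with the tensor splitting above, these immediately give strong pure infiniteness of $A$ in both cases. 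In the RACG case, the first factor is moreover $\CO_\infty$-stable (being a Kirchberg algebra), and since $\CO_\infty$-absorption is preserved under tensoring with an arbitrary $C^*$-algebra, we conclude that $A$ itself is $\CO_\infty$-stable.

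The main technical obstacle is the initial geometric step: rigorously establishing the equivariant homeomorphism $B(X_\Gamma)\cong B(X_{\Gamma'})\times\{\breve{0},\breve{1}\}^n$, which requires a careful analysis of how the halfspace structure of a product of cube complexes relates to those of the factors, and of how the defining condition for the Nevo-Sageev boundary inside the Roller boundary behaves under products. Once this is pinned down, the tensor-product identification and the final regularity conclusions are a matter of packaging Theorem~\ref{thm: main two} together with standard $C^*$-algebraic permanence results.
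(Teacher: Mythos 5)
Your proposal is correct and follows essentially the same route as the paper: decompose the boundary action as a product, split the reduced crossed product as a tensor product, apply Theorem \ref{thm: main two} to the strictly non-Euclidean factor, and finish with the standard permanence facts for $\CO_\infty$-stability and strong pure infiniteness. The one step you flag as the ``main technical obstacle,'' namely the equivariant identification $B(X_\Gamma)\cong B(X_{\Gamma'})\times\{\breve{0},\breve{1}\}^n$, is a known compatibility of the Roller/Nevo--Sageev boundaries with products that the paper simply cites from Nevo--Sageev, so no further work is needed there (and the amenability/exactness hypotheses you invoke for the tensor splitting of the crossed product of a product action are not actually needed).
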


For the visual boundaries of the cube complexes $X_\Gamma=\Sigma_\Gamma$ or $\tilde{S_\Gamma}$, in the irreducible case, by establishing the necessary topological dynamical properties mentioned above, we have the following result.

\begin{customthm}{B}(Theorem \ref{cor: main3})
Let $\Gamma=(V, E)$ be a finite simple graph without joins. 
\begin{enumerate}
	\item if $|V|\geq 3$ then $C(\partial \Sigma_\Gamma)\rtimes_r W_\Gamma$ is simple and  purely infinite.
	\item If $|V|\geq 2$ then $C(\partial \tilde{S_\Gamma})\rtimes_r A_\Gamma$ is simple and purely infinite.
\end{enumerate}
\end{customthm}

Another type of actions of  groups on the visual boundary considered in this paper is the fundamental group of a graph of groups acting on its  Bass-Serre tree. Follow the notations in \cite{B-M-P-S-T} for graph of groups $\CG=(\Gamma, G)$ and  denote by  $\pi_1(\CG, v)$ the fundamental group of $\CG$ at a base vertex $v$ and $v\partial X_{\CG}$ the boundary of the Bass-Serre tree, we have the following other main theorems.  We remark that the following assumptions in Theorem C and D on graphs of groups $\CG$ are very mild so that one can easily construct examples satisfying the theorems, e.g., examples in Theorem E below.  One may also want to compare Theorems C and D to the pure infiniteness result of $C^*$-algebras obtained in \cite{B-M-P-S-T} originally. The main tool there is the local contraction of the action introduced in \cite{A-D} and \cite{L-S}. The relation between the local contraction and our pure infiniteness of actions were discussed in \cite[Theorem 5.8]{M}. On the other hand, it follows from \cite[Theorem 6.15]{M} that there exists purely infinite actions of $\Z_2*\Z_3$ on $\{0, 1\}^\N\times \R$, which is not locally contracting. For the other notions appeared in the following theorems, we also refer to Section 5 for the definitions. 

\begin{customthm}{C}(Theorem \ref{thm: boundary action})
Let $\Gamma=(V, E)$ be a locally finite non-singular graph and $\CG=(\Gamma, G)$ a graph of groups. Suppose 
\begin{enumerate}
	\item $v\partial X_{\CG}$ is infinite;
	\item $\xi$ can flow to $e$ for any $\xi\in \partial X_{\CG}$ and $e\in E$; and
	\item there is a repeatable path $\mu=g_1e_1\dots g_ne_n$ with $|\Sigma_{\bar{e}_n}|\geq 2$.
\end{enumerate}
Then the natural action $\beta: \pi(\CG, v)\curvearrowright v\partial X_{\CG}$ is a strong boundary action. In particular, $\beta$ is a $\pi_1(\CG, v)$-boundary action. If, in addition, each $G_e$ is amenable and $\pi_1(\CG, v)$ is $C^*$-simple, then the action $\beta$ is topologically free and thus the crossed product $C(v\partial X_{\CG})\rtimes_r \pi_1(\CG, v)$ is a unital Kirchberg algebra satisfying the UCT.
\end{customthm}

In particular, if we restrict to Generalized Baumslag-Solitar (GBS) groups/graphs, we have the following theorem. Note that this result also provides a new method in identifying $C^*$-simple GBS groups.

\begin{customthm}{D}(Theorem \ref{thm: main one})
Let $\CG=(\Gamma, G)$ be a locally finite non-singular GBS graph of groups in which $\Gamma=(V, E)$ is a finite graph. 
Suppose 
\begin{enumerate}
	\item $v\partial X_{\CG}$ is infinite;
	\item $\xi$ can flow to $e$ for any $\xi\in \partial X_{\CG}$ and $e\in E$;
	\item there is a repeatable path $\mu=g_1e_1\dots g_ne_n$ with $|\Sigma_{\bar{e}_n}|\geq 2$; and
	\item $\CG$ is not unimodular.
\end{enumerate}
Then the natural action $\beta: \pi_1(\CG, v)\curvearrowright v\partial X_{\CG}$ is an amenable topologically free strong boundary action and the crossed product $C(v\partial X_{\CG})\rtimes_r \pi_1(\CG, v)$ is a unital Kirchberg algebra satisfying the UCT. Furthermore, $\pi_1(\CG, v)$ is $C^*$-simple.
\end{customthm}

Denote by $\CC$ the class of all  fundamental groups of graph of groups satisfying Theorems C and D, which includes the following specific examples. We remark all groups in $\CC$ have $2$-paradoxical towers in the sense of \cite{G-G-K-N}. Then any minimal topologically free amenable actions of these groups on a compact metrizable space yield a unital Kirchberg algebra satisfying the UCT. See more in Theorem \ref{thm: final}. To detect members in $\CC$, we have the following examples.

\begin{customthm}{E}(Theorem \ref{eg: main4})
	Still write $\CG=(\Gamma, G)$, the class $\CC$ particularly contains the following groups.
	\begin{enumerate}
		\item $C^*$-simple $\pi_1(\CG, v)$ in which  $\CG$ satisfies assumptions (1)-(3) of Theroem C and each $G_e$ is amenable. This includes Example \ref{eg: new1} In particular, this includes $G*F$ such that $(|G|-1)(|F|-1)\geq 2$.
		\item $C^*$-simple GBS groups $\pi_1(\CG, v)$ appeared in Theorem D.  This includes non-degenerated $BS(k, l)$ where $(|k|-1)(|l|-1)\geq 2$ in Example \ref{eg: new2} and certain GBS groups of $n$-circles in Example \ref{eg: new3} as well as some GBS groups of wedge sum of $n$-circles for $m$ times in Example \ref{eg: new4}. In addition, if $n\geq 2$ or $m\geq 2$, these are not non-degenerated BS groups by Remark \ref{rmk: final}.
	\end{enumerate} 
\end{customthm}

\begin{rmk}
	\begin{enumerate}
	\item During the preparation of the current paper. Gardella, Geffen, Kranz and Naryshkin posted on arXiv a paper \cite{G-G-K-N} on similar topics. We remark that our Theorem A(1) has generalized their result in \cite[Example 4.8]{G-G-K-N}. On the other hand, once the minimality and the topological freeness has been proved through our method (see Section 4), by combining the amenability of action on $\CR(X)$ obtained in \cite{Lec}, one can also apply their results to obtain a different proof of Theorem A(1). However, the other theorems in this paper cannot be established in this way, because to the best knowledge of the authors, it is unknown whether those actions are amenable. See more in Remark \ref{rmk: aukward}.
	\item In the same day that the authors of  this paper submit the first version of the current paper to arXiv,  there is a new version of \cite{G-G-K-N}  appeared on arXiv in which a different approach is used to show non-degenerated free products and Baumslag-Solitar groups as new examples of groups with $n$-paradoxical towers. These examples are covered by our Theorem E as well. Furthermore, in the current second version, we added more examples other than BS groups obtained in the first version. 
	\end{enumerate}

\end{rmk}

Our paper is organized in the following way. In Section 2, we review some necessary concepts, definitions and preliminary results. In Section 3, we establish all topological dynamical theorems and link them to pure infiniteness of reduced crossed products for use later. In Section 4, we focus on the action of RACGs and RAAGs on the  Roller boundaries and the Nevo-Sageev boundaries to establish all necessary dynamical properties described in Section 3 and thow the reduced crossed product is (strongly) purely infinite. In Section 5, we investigate actions of irreducible RACGs and RAAGs on visual boundaries of their complexes as well as the fundamental groups of graphs of groups on the visual boundaries of their Bass-Serre trees to study pure infiniteness of related reduced crossed products, $C^*$-simlicity of certain GBS groups and the groups with $n$-paradoxical towers.

\section{Preliminaries}
In this section, we recall some terminologies and definitions used in the paper.
\subsection{Groups, topological dynamical systems and their $C^*$-algebras }
Let $G$ be a countable discrete group, $X$ a locally compact Hausdorff space and $\alpha: G\curvearrowright X$ denotes a continuous action of $G$ on $X$.  We write $M_G(X)$ for the set of all $G$-invariant regular Borel probability measures on $X$.

We say an action $\alpha: G\curvearrowright X$ is \textit{minimal} if all orbits are dense in $X$. Recall that an action $\alpha: G\curvearrowright X$ is said to be \emph{essentially free} provided that, for every closed $G$-invariant subset $Y\subset X$, the subset of points in $Y$ with the trivial stabilizer, say $\{x\in Y: \stab_G(x)=\{e\}\}$, is dense in $Y$, where $\stab_G(x)=\{t\in G: tx=x\}$. An action is said to be \textit{topologically free} provided that the set $\{x\in X: \stab_G(x)=\{e\}\}$, is dense in $X$ and this is equivalent to that the fixed point set $\{x\in X: tx=x\}$ of each nontrivial element $t$ of $G$ is nowhere dense. It is not hard to see that essentially freeness means that the restricted action to each $G$-invariant closed subspace  is topologically free with respect to the relative topology and thus these two concepts are equivalent when the action is minimal. We refer to \cite{B-O} for standard construction of (reduced) crossed product $C^*$-algebras $C_0(X)\rtimes_r G$ for topological dynamical systems.

In the case that $X$ is compact, It is well known that if the action $G\curvearrowright X$ is topologically free and minimal then the reduced crossed product $C(X)\rtimes_r G$ is simple (see \cite{A-S}) and it is also known that the crossed product $C(X)\rtimes_r G$ is nuclear if and only if the action $G\curvearrowright X$ is amenable (see \cite{B-O}). Archbold and Spielberg \cite{A-S} showed that $C(X)\rtimes G$ is simple if and only if the action is minimal, topologically free and \textit{regular} (meaning that the reduced crossed product coincides with the full crossed product). These imply that $C(X)\rtimes_r G$ is simple and nuclear if and only if the action is minimal, topologically free and amenable. 

A type of topological dynamical systems of the particular interest are $G$-boundary actions. Now, let $X$ be compact and denote by $P(X)$ the set of all probability measures on $X$. Furstenberg provided the following definition in \cite{F}.

\begin{defn}\label{defn: boundary}
	\begin{enumerate}
		\item A $G$-action $\alpha$ on $X$ is called \textit{strongly proximal} if for any probability measure $\eta\in P(X)$, the closure of the orbit $\{g\eta: g\in G\}$ contains a Dirac mass $\delta_x$ for some $x\in X$ 
		\item A $G$-action $\alpha$ on a compact Hausdorff space $X$ is called a $G$-\textit{boundary action} if $\alpha$ is minimal and strongly proximal.
	\end{enumerate}
\end{defn}
Topological freeness of a $G$-boundary action is linked to $C^*$-simplicity of $G$, i.e., $C^*_r(G)$ is simple. We refer to \cite{Harpe}, \cite{B-K-K-O} and \cite{K-K} for this topic.

Let $A$ be a unital $C^*$-algebra. A non-zero positive element $a$ in $A$ is said to be\textit{ properly infinite} if
$a\oplus a\precsim a$, where $\precsim$ is the Cuntz subequivalence relation, for which we refer to \cite{A-P-T} as a standard reference. A $C^\ast$-algebra $A$ is said to be \textit{purely infinite} if there are no characters
on $A$ and if, for every pair of positive elements $a,b\in A$ such that $b$ belongs to the closed ideal in $A$ generated by $a$, one has $b\precsim a$. It was proved in \cite{K-Rord} that a $C^\ast$-algebra $A$ is purely infinite if and
only if every non-zero positive element $a$ in $A$ is properly infinite. In addition, in \cite[Definition 5.1]{Kir-Rord}, Kirchberg and R{\o}rdam also introduced a stronger version of pure infiniteness for $C^*$-algebras called \textit{strongly pure infiniteness}. Denote by $\CO_\infty$ the Cuntz algebra of infinite generators. The condition of $\CO_\infty$-stability for $A$, i.e. $A\otimes \CO_\infty\simeq A$,  implies that $A$ is strongly purely infinite, and if $A$ is separable and nuclear, these two conditions are equivalent. See by \cite[Theorem 8.6]{Kir-Rord}.

In this paper, we will address on \textit{right-angled Coexter groups} and \textit{right-angled Artin groups}, abbreviated by RACGs and RAAGs, respectively. We recall their definitions by using \textit{defining graphs}, which are finite simple graph $\Gamma=(V, E)$, i.e.,  the vertex set $V$ is finite.

\begin{defn}
	For a finite simple graph $\Gamma=(V, E)$, the corresponding RACG $W_\Gamma$ is defined to be
	\[W_\Gamma=\la V: v^2_i=e\ \text{for any }1\leq i\leq n\ \text{and } v_iv_j=v_jv_i\ \text{for any }(v_i, v_j)\in E\ra.\]
	The corresponding RAAG $A_\Gamma$ is defined to be 
	\[A_\Gamma=\la V: v_iv_j=v_jv_i\ \text{for any }(v_i, v_j)\in E\ra.\]
\end{defn}

Let $\Gamma=(V, E)$ be a finite simple graph. Let $\Lambda$ be a subgraph of $\Gamma$. Then the corresponding subgroup $W_\Lambda$ (resp. $A_\Lambda$) of $W_\Gamma$ (resp. $A_\Gamma$)  is called a \textit{special subgroup}. We say $\Gamma$ is a \textit{join} if there are two subgraphs $\Gamma_1=(V_1, E_1)$ and $\Gamma_2=(V_2, E_2)$ of $\Gamma$ such that $V_1$ and $V_2$ form a partition of $V$ and every vertex in $V_1$ is adjacent to every vertex in $V_2$. In this situation, we write $\Gamma=\Gamma_1\star\Gamma_2$. If there is no join for $\Gamma$, we call the corresponding RACG $W_\Gamma$ and RAAG $A_\Gamma$ \textit{irreducible}. In addition, for each RACG and RAAG,  one naturally assign it with a $\cat$ cube complex constructed from its Cayley graph, which is called \textit{Davis complex} and the \textit{universal cover of the Salvetti complex}, respectively so that the RACG and the RAAG act on them by isometry cocompactly, respectively. We will leave the definitions and discussions of these two specific complexes until Section 4. Instead, we recall some general facts on $\cat$ cube complexes here. 

\subsection{$\cat$ cube complexes and their boundaries}
We refer to \cite{N-S}, \cite{C-S}, \cite{Le} and \cite{Char} for general information of $\cat$ cube complexes.

\begin{defn}
A $\cat$ \textit{cube complex} is a simply connected cell complex whose cells are Euclidean cubes $[0, 1]^d$ of various dimensions. In addition, the link of each $0$-cell, i.e., vertex, is a \textit{flag} complex, which is a simplicial complex such that any $n+1$ adjacent vertices belong to an $n$-simplex.
\end{defn}

We say a $\cat$ cube complex $X$ \textit{finite dimensional} if there is a uniform upper bound on the dimension of cubes in $X$. Let $X$ be a $\cat$ cube complex. A \textit{midcube} of a cube $[0, 1]^d$, is the restriction of a coordinate of the cube to be $1/2$. A \textit{hyperplane} $\hat{h}$ is a connected subspace of $X$ with the property that for each cube $C$ in $X$, the intersection $\hat{h}\cap C$ is either a midcube of $C$ or empty. Let $e$ be an edge in $X^1$, we say a hyperplane $\hat{h}$ is dual to $e$ if $\hat{h}\cap e\neq \emptyset$.  In general, $\hat{h}$ separates $X$ into precisely two components, called \textit{halfspaces}, denoted by $h$ and $h^*$. $X$ is said to be \textit{essential} if given any half space $h$ in $X$, there is a vertex in $h$ arbitrary far from $\hat{h}$. Similarly, we say a group $G\leq \aut(X)$ acts \textit{essentially} on $X$ if no $G$-orbit remains in a bounded neighborhood of a halfspace of $X$. Here $\aut(X)$ is the automorphism group of $X$ consisting all isometries that preserve the cubical structures. A $\cat$ cube complex $X$ is said to be \textit{cocompact} if the action on $X$ of the group $\isom(X)$ consisting all isometries of $X$  is cocompact.

A $\cat$ cube complex $X$ is said to be \textit{irreducible} if it cannot be written as a product of two $\cat$ cube complexes. Otherwise, we say $X$ is reducible. Let $n\in \N$. An $n$-dimensional \textit{flat} is an isometrically embedded copy of $n$-dimensional Euclidean space $\mathbb{E}^n$ (in the usual $\cat$ metric). A unbounded cocompact $\cat$ cube complex $X$ is said to be \textit{Euclidean} if $X$ contains a $\aut(X)$-invariant flat. Otherwise, we say $X$ is non-Euclidean. An unbounded essential $\cat$ cube complex whose irreducible factors are all non-Euclidean is called a \textit{strictly non-Euclidean} complex. 

We also consider the $1$-skeleton of a $\cat$ cube complex, which usually equipped with the usual $\ell_1$-metric (called the path metric or the combinatorial metric as well). For the finite-dimensional case, the $\ell_1$-metric and the usual $\cat$ metric on $X$  are quasi-isometric to each other.

\begin{lem}\cite[Lemma 2.2]{C-S}\label{lem: quasi-isometric}
Let $(X,d)$ be a finite-dimensional $\cat$ cube complex, where $d$ is the usual $\cat$ metric. Then $(X,d)$ is quasi-isometric to its $1$-skeleton endowed with the combinatorial metric.
\end{lem}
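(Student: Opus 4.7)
The plan is to realize the natural inclusion $\iota: X^1 \hookrightarrow X$ as a quasi-isometry when $X^1$ carries the combinatorial metric $d_1$ and $X$ carries the $\cat$ metric $d$. Set $N = \dim X < \infty$; the core observation is that on any Euclidean cube $[0,1]^n$ with $n \leq N$, the Euclidean and $\ell^1$ norms satisfy
\begin{equation*}
\|x\|_2 \;\leq\; \|x\|_1 \;\leq\; \sqrt{N}\,\|x\|_2.
\end{equation*}

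First I would globalize this cube-wise comparison. Both $d$ and the natural $\ell^1$ path metric $d_{\ell^1}$ on $X$ (which extends $d_1$ from $X^1$ to all of $X$) are obtained by infimizing path lengths, where a segment lying in a single cube has length measured by the Euclidean, respectively $\ell^1$, norm on that cube. The pointwise inequality therefore passes directly to
\begin{equation*}
d(x,y) \;\leq\; d_{\ell^1}(x,y) \;\leq\; \sqrt{N}\, d(x,y), \qquad x,y \in X.
\end{equation*}

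Next I would identify $d_{\ell^1}$ with $d_1$ on the $0$-skeleton. The inequality $d_{\ell^1}(u,v) \leq d_1(u,v)$ is immediate since any edge path has $\ell^1$-length equal to its combinatorial length. For the reverse, one invokes the standard fact that in a $\cat$ cube complex the combinatorial distance between two vertices equals the number of separating hyperplanes and that each separating hyperplane $\hat{h}$ contributes at least $1$ to the $\ell^1$-length of any path joining them, because such a path must sweep fully across the coordinate dual to $\hat{h}$ in some cube containing $\hat{h}$. Combining, for $u,v \in X^0$,
\begin{equation*}
d(u,v) \;\leq\; d_1(u,v) \;\leq\; \sqrt{N}\, d(u,v).
\end{equation*}

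Finally, since each point of $X$ lies in a cube of dimension at most $N$ and is therefore within $\cat$-distance $\sqrt{N}$ of a vertex, $\iota(X^1) = X^0$ is coarsely dense in $(X,d)$. Together with the previous bound, this shows that $\iota$ is a quasi-isometry with multiplicative constant $\sqrt{N}$ and additive constant at most $\sqrt{N}$. The only genuinely geometric input is the identification $d_{\ell^1}|_{X^0\times X^0} = d_1$, which depends on the $\cat$ cube structure through the bijective correspondence between edges and dual hyperplanes; the remainder reduces to the cube-by-cube Cauchy--Schwarz inequality and is the main routine step rather than an obstacle.
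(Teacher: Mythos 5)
Your proof is correct and follows the standard route: the paper itself states this lemma as a citation to \cite[Lemma 2.2]{C-S} without reproducing a proof, and the argument there is exactly your cube-wise comparison of the Euclidean and $\ell^1$ norms (with constant $\sqrt{N}$ depending only on the dimension), passed to the two length metrics, together with coarse density of the $0$-skeleton. The only step deserving a touch more care is the inequality $d_1(u,v)\leq d_{\ell^1}(u,v)$: a path may cross the region dual to a hyperplane $\hat{h}$ through several cubes and in several partial passes, so one should argue via the globally defined dual coordinate function of $\hat{h}$ (equal to $0$ and $1$ off the carrier on the two sides) whose total variation along the path is at least $1$, and note that within each cube distinct separating hyperplanes govern distinct coordinates so their contributions to the $\ell^1$-length add; this is exactly the mechanism you indicate and does not affect correctness.
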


Finally, we recall that one may assign several compact Hausdorff boundaries to a $\cat$ cube complex $X$. We refer to \cite[Section 1.3]{N-S} and \cite{B-H} for more detailed information. If a group $G$ acting on $X$ by isometry, sometimes, the action can be naturally extended to the boundary as a topological action, which will yield interesting topological dynamical systems and $C^*$-algebras. In this paper, we mainly care about the \textit{visual boundary} (see. e.g. \cite{B-H}) and the \textit{Nevo-Sageev boundary} introduced in \cite{N-S}.  We still leave the definitions to Section 4 and 5.

\section{Comparison properties and pure infiniteness of dynamical systems}
In this section, we recall several useful dynamical notions appeared in the literature that have a paradoxical flavor and in fact imply the reduced crossed products are purely infinite. We also provide some new criteria for these notions to hold, which will be applied in the following sections. Let $G$ be a countable discrete group and $X$ a locally compact Hausdorff space. Let $G\curvearrowright X$ be a continuous action. The following definition appeared in \cite{L-S}.  See also \cite{Gla2}.

\begin{defn}\cite[Definition 1]{L-S}\label{defn: strong boundary}
Let $X$ be a compact Hausdorff space. We say an action $G\curvearrowright X$ is a \textit{strong boundary action} (or \textit{extreme proximal}) if for any compact set $F$ and  non-empty open set $O$ there is a $g\in G$ such that $gF\subset O$.  
\end{defn}

We remark that $G\curvearrowright X$ is a strong boundary action, then in\cite{Gla1}, Glasner showed that $X$ is a $G$-boundary in the sense of Definition \ref{defn: boundary}. On the other hand, it was proved in \cite{L-S} that the reduced crossed product of a topological free strong boundary action is simple and purely infinite. Then the notion of strong boundary action has been generalized in \cite{J-R} to $n$-filling actions.

\begin{defn}\cite{J-R}\label{defn: n-filling}
	An action $\alpha: G\curvearrowright X$  on a compact Hausdorff space $X$ is said to be $n$-\textit{filling} if for any non-empty open sets $O_1, \dots, O_n$ there are $n$ group elements $g_1,\dots, g_n\in G$ such that $\bigcup_{i=1}^ng_iO_i=X$. 
\end{defn}

It is not hard to see strong boundary actions are exactly the $2$-filling actions and it was proved in \cite{J-R} that reduced crossed products of topologically free $n$-filling actions are also simple and purely infinite. Note that all $n$-filling actions are necessarily minimal.  Then, in \cite{M2}, the first author observed that the \textit{dynamical comparison}, first introduced by Winter and then refined by Kerr in \cite{D}, also serves as an generalization of the $n$-filling property and still implies the pure infiniteness of the reduced crossed products in the case that $\alpha$ is minimal and there is no $G$-invariant probability measure on $X$. To move furthermore, in \cite{M2} and \cite{M}, under the assumption that there is no invariant measures, which is usually necessary for a reduced crossed product to be purely infinite, the theory surrounding dynamical comparison property actually has been established in a more general setting of non-minimal actions and even for locally compact Hausdorff \'{e}tale groupoids. In the current paper, we only deal with the transformation groupoids case, i.e., countable discrete group $G$ acting on locally compact Hausdorff spaces $X$.  In addition, throughout the paper, we write $A\sqcup B$ to indicate that the union of sets $A$ and $B$ is a disjoint union and denote by $\bigsqcup_{i\in I}A_i$ the disjoint union of the family $\{A_i: i\in I\}$.

\begin{defn}(\cite{D}, \cite{M})
	Let $G\curvearrowright X$ be a continuous action on a locally compact Hausdorff space $X$. Let $O, V$ be non-empty open sets in $X$ and $F$ a compact set in $X$. 
	\begin{enumerate}[label=(\roman*)]
		\item  We write $F\prec O$ if there is an open cover $\{U_1, \dots, U_n\}$ of $F$ and group elements $g_1,\dots, g_n\in G$ such that $\{g_1U_1,\dots, g_nU_n\}$ is a disjoint family of open sets contained in $O$, i.e., $\bigsqcup_{i=1}^ng_iU_i\subset O$. 
		\item We say $V$ is \textit{dynamical subequivalent} to $O$, denoted by $V\prec O$, if $F\prec O$ for any compact set $F\subset V$. 
		\item We say $V$ is \textit{paradoxical subequivalent} to $O$, denoted by $V\prec_2 O$, if $F\prec_2 O$ for any compact set $F\subset V$ in the sense that there are disjoint non-empty open sets $O_1, O_2\subset O$ such that $F\prec O_1$ and $F\prec O_2$. 
	\end{enumerate}
\end{defn}

The following concepts were introduced in \cite{D}, \cite{M2} and \cite{M}.
\begin{defn}\label{defn: comparison-pure infiniteness}
	 Let $\alpha: G\curvearrowright X$ be a continuous action on a locally compact Hausdorff space $X$.
	 \begin{enumerate}[label=(\roman*)]
	 \item $\alpha$ is said to have dynamical comparison if $U\prec V$ whenever $\mu(U)<\mu(V)$ for any $\mu\in M_G(X)$.
	 \item $\alpha$ is said to have \textit{paradoxical comparison}  if $O\prec_2 O$ for any non-empty open set $O$ in $X$.
	 \item $\alpha$ is said to be \textit{purely infinite} if $U\prec_2 V$ for any  non-empty open sets $U, V$ satisfying $U\subset G\cdot V$.
	 \item $\alpha$ is said to be \textit{weakly purely infinite} if $U\prec V$ for any  non-empty open sets $U, V$ satisfying $U\subset G\cdot V$.
	 \end{enumerate}
\end{defn}

It has been observed in \cite{M2} that all $n$-filling actions, thus including all strong boundary actions, satisfy dynamical comparison and have no invariant probability measures.  For the relation among notions above, the first author proved the following theorem in \cite{M}, which was written in the language of groupoids.
\begin{thm}\cite[Theorem 5.1]{M}\label{thm: pure infiniteness of action}
		Let $\alpha: G\curvearrowright X$. Consider the following conditions.
	\begin{enumerate}[label=(\roman*)]

		\item $\alpha$ has dynamical comparison and $M_G(X)=\emptyset$.
		
		\item $\alpha$ is purely infinite.
		
		\item $\alpha$ has paradoxical comparison.
		
		\item $\alpha$ is weakly purely infinite.
	\end{enumerate}
	Then (i)$\Rightarrow$(ii)$\Leftrightarrow$(iii)$\Rightarrow$(iv). If $\alpha$ is minimal then they are equivalent.
\end{thm}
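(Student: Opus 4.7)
The structure of the argument is four implications plus the closure under minimality, which I would tackle in the order (i)$\Rightarrow$(ii), (ii)$\Leftrightarrow$(iii), (iii)$\Rightarrow$(iv), and then close the loop under minimality.

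For (i)$\Rightarrow$(ii), when $M_G(X)$ is empty, the hypothesis $\mu(U)<\mu(V)$ for all $\mu\in M_G(X)$ is vacuously satisfied for every pair of non-empty open sets, so dynamical comparison yields $U\prec V$ unconditionally. Given $U\subset G\cdot V$ and compact $F\subset U$, I would pick any two disjoint non-empty open subsets $V_1,V_2\subset V$ and apply this twice to obtain $F\prec V_1$ and $F\prec V_2$, which is exactly the required $F\prec_2 V$. The direction (ii)$\Rightarrow$(iii) is trivial by specializing to $U=V=O$, and (iii)$\Rightarrow$(iv) (once (iii)$\Rightarrow$(ii) is known) is immediate since $U\prec_2 V$ plainly entails $U\prec V$.

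The technical heart of the theorem is (iii)$\Rightarrow$(ii). Given $U\subset G\cdot V$ and a compact $F\subset U$, compactness yields finitely many translates with $F\subset\bigcup_{i=1}^n g_iV$; set $F_i=g_i^{-1}(F\cap g_iV)\subset V$, so that $K:=\bigcup_i F_i$ is a compact subset of $V$. The plan is to iterate the paradoxical subequivalence $V\prec_2 V$ to split $V$ into $2n$ pairwise disjoint non-empty open subpieces $V^{(i)}_j$ ($1\le i\le n$, $j=1,2$) with $K\prec V^{(i)}_j$ for each $(i,j)$. For each $(i,j)$, unpacking $K\prec V^{(i)}_j$ gives an open cover $\{W^{(i,j)}_\ell\}_\ell$ of $F_i$ and elements $\{h^{(i,j)}_\ell\}_\ell$ making $\{h^{(i,j)}_\ell W^{(i,j)}_\ell\}_\ell$ a disjoint family inside $V^{(i)}_j$. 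Translating by $g_i$, the family $\{g_iW^{(i,j)}_\ell\}_{i,\ell}$ covers $F$ and is carried by $\{h^{(i,j)}_\ell g_i^{-1}\}_{i,\ell}$ into the pairwise disjoint union $V_j:=\bigsqcup_i V^{(i)}_j\subset V$, yielding $F\prec V_j$ for $j=1,2$. Since $V_1$ and $V_2$ are disjoint inside $V$, this delivers $F\prec_2 V$. The main obstacle is the inductive chaining of $V\prec_2 V$ into arbitrarily many pairwise disjoint subpieces that each retain the subequivalence to $V$; this is a bookkeeping argument that at each stage applies paradoxical comparison to one of the current pieces and uses compactness to refine the open covers built so far.

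To close the loop under minimality, it suffices to derive (i) from (iv), since dynamical comparison then becomes automatic as in the proof of (i)$\Rightarrow$(ii) once $M_G(X)=\emptyset$ is established. Suppose toward contradiction that $\mu$ is a $G$-invariant probability measure; minimality forces $\mu$ to be strictly positive on every non-empty open set. For any two non-empty open $U,V$, minimality gives $U\subset X=G\cdot V$, so weak pure infiniteness yields $U\prec V$, and $G$-invariance of $\mu$ applied to the disjoint family realizing this subequivalence gives $\mu(U)\le\mu(V)$. Swapping $U$ and $V$ forces $\mu(U)=\mu(V)$ for every pair of non-empty open sets, which is absurd: choosing two disjoint non-empty open $U_1,U_2\subset X$ one would have $\mu(U_1\cup U_2)=\mu(U_1)=\mu(U_2)$ while additivity forces $\mu(U_1\cup U_2)=\mu(U_1)+\mu(U_2)$, so $\mu(U_1)=0$, contradicting strict positivity.
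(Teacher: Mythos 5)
First, a caveat: the paper does not prove this statement --- it is imported as a black box from \cite[Theorem 5.1]{M} (where it is stated for \'etale groupoids) --- so there is no in-paper proof to compare yours against; I can only assess your reconstruction on its own terms. Its architecture is correct and is the standard one: (i)$\Rightarrow$(ii) by vacuousness of the measure hypothesis, (ii)$\Leftrightarrow$(iii) with the splitting of $V$ into $2n$ pairwise disjoint pieces each absorbing the relevant compact set, (iii)$\Rightarrow$(iv) formally, and (iv)$\Rightarrow$(i) under minimality via the averaging contradiction (where $U\prec V$ gives $\mu(F)\le\sum_i\mu(U_i)=\sum_i\mu(g_iU_i)\le\mu(V)$ and inner regularity upgrades this to $\mu(U)\le\mu(V)$).

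Two technical points need repair before this is a complete proof. First, in (iii)$\Rightarrow$(ii) the set $F\cap g_iV$ is the intersection of a compact set with an open set and is in general \emph{not} compact, so your $K=\bigcup_i g_i^{-1}(F\cap g_iV)$ need not be compact and paradoxical comparison cannot be applied to it as written; you must first shrink the cover, using the standard fact that a compact $F\subset\bigcup_{i=1}^n g_iV$ decomposes as $F=\bigcup_{i=1}^n C_i$ with each $C_i$ compact and contained in $F\cap g_iV$, and then set $F_i=g_i^{-1}C_i$. The same device is what makes your deferred ``inductive chaining'' work: to iterate $V\prec_2 V$ you need, from $K\prec O_1$, a \emph{compact} subset of $O_1$ that absorbs $K$ (shrink the witnessing cover to one with compact closures) before applying $O_1\prec_2 O_1$ and composing the subequivalences; this is routine but is genuinely where the work lies. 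Second, both in (i)$\Rightarrow$(ii) and in the minimality step you silently assume that $V$ (respectively $X$) contains two disjoint non-empty open subsets. This is harmless but should be justified: under (i) with $X$ infinite, a singleton open set would violate comparison applied to a two-point compact set (and finite $X$ always carries an invariant measure), while under minimality plus (iv) an isolated point would force $X$ to be a single discrete orbit, on which (iv) fails once $|X|\ge 2$; the literal statement is in fact vacuously false for a one-point space, so some non-degeneracy is implicitly assumed throughout.
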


The following result was essentially established in \cite{M2} as our main tool for application in the next sections. We remark that a version of locally compact Hausdorff \'{e}tale groupoids of the following results have been  established in \cite{M}. 

\begin{theorem}\cite[Theorem 1.1 Corollary 1.4]{M2}\label{thm: pure inf main}
	Let $G$ be a countable discrete infinite group, $X$ a compact Hausdorff space and $\alpha: G\curvearrowright X$ an action of $G$ on $X$.  Suppose $\alpha$ is purely infinite. If either
	\begin{enumerate}
		\item $\alpha$ is minimal and topologically free,  or
		\item $G$ is exact and $\alpha$ is essentially free as well as there are only finitely many $G$-invariant closed sets in $X$,
	\end{enumerate}
then the reduced crossed product $C(X)\rtimes_r G$ is strongly purely infinite. In the first case $C(X)\rtimes_r G$ is simple. In the second case $C(X)\rtimes_r G$ has finitely many ideals.
\end{theorem}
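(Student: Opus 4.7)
My overall plan is to translate the dynamical hypothesis that $\alpha$ is purely infinite into Cuntz paradoxical subequivalence inside $C(X)\rtimes_r G$, and then invoke the Kirchberg--R{\o}rdam characterizations of pure infiniteness and strong pure infiniteness. The key bridge is the following construction: if $F\subset V$ is compact, $\{U_i\}_{i=1}^n$ is an open cover of $F$, and $g_1,\dots,g_n\in G$ satisfy $\bigsqcup_i g_iU_i\subset O$, choose a partition of unity $(\phi_i)$ subordinate to $\{U_i\}$ and set $s_i:=u_{g_i}\sqrt{\phi_i}$, where $u_g$ denotes the canonical implementing unitary in the multiplier algebra. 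Then $s_i^*s_i=\phi_i$ while $s_is_i^*=\alpha_{g_i}(\phi_i)$ have pairwise disjoint supports inside $O$, producing Cuntz witnesses showing that any positive function supported in $F$ is Cuntz-subequivalent to a suitable positive element supported in $O$. When $O$ splits into two disjoint open pieces, the dynamical paradoxical subequivalence $F\prec_2 O$ thereby becomes paradoxical Cuntz subequivalence.

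For part~(1), simplicity follows from the Archbold--Spielberg result recalled in Section~2: topological freeness plus minimality makes the transformation groupoid minimal and topologically principal, so its reduced $C^*$-algebra $C(X)\rtimes_r G$ is simple. For pure infiniteness, take any nonzero positive $a\in C(X)\rtimes_r G$; applying the canonical faithful conditional expectation $E:C(X)\rtimes_r G\to C(X)$ together with a standard Cuntz reduction produces a nonzero $f\in C(X)_+$ with $f\precsim a$. The dynamical assumption gives $\spp(f)\prec_2\spp(f)$, and the bridge of the previous paragraph yields $f\oplus f\precsim f$, so $a$ is properly infinite; by \cite{K-Rord}, $C(X)\rtimes_r G$ is purely infinite. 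Running the same partition-of-unity construction inside matrix amplifications $M_k(C(X)\rtimes_r G)\cong C(X,M_k)\rtimes_r G$ verifies the matricial Cuntz paradoxical subequivalence of \cite[Definition 5.1]{Kir-Rord}, yielding strong pure infiniteness.

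For part~(2), exactness of $G$ combined with essential freeness establishes a bijection between $G$-invariant closed subsets $Y\subset X$ and closed two-sided ideals $I_Y$ of $C(X)\rtimes_r G$ (a Sierakowski-type ideal correspondence), so the assumption of finitely many invariant closed sets forces a finite ideal lattice. Every subquotient $I_Y/I_Z$ for nested invariant closed sets $Z\subset Y$ is canonically isomorphic to $C_0(Y\setminus Z)\rtimes_r G$; essential freeness of $\alpha$ restricts to topological freeness on $Y\setminus Z$, and dynamical pure infiniteness descends to every invariant locally closed subspace. Applying the argument of part~(1) to each subquotient yields strong pure infiniteness, and this property ascends through the finite ideal filtration by \cite{Kir-Rord}.

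The step I expect to be most delicate is the matricial strengthening underlying strong pure infiniteness: the partition-of-unity witnesses must separate arbitrary pairs of positive \emph{matrix} elements, not merely scalar positive functions. Establishing this uniformly requires combining the translation principle with a careful approximation argument that exploits the faithfulness of $E$ and the fact that Cuntz subequivalence in a commutative $C^*$-algebra is controlled by support relations. A secondary subtlety in part~(2) is verifying that restriction of $\alpha$ to an invariant locally closed subspace inherits both essential freeness and dynamical pure infiniteness, which is straightforward from the definitions but should be traced carefully.
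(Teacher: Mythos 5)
Your re-derivation of pure infiniteness of $C(X)\rtimes_r G$ from the dynamical hypothesis (partition-of-unity witnesses turning $F\prec_2 O$ into proper infiniteness of positive elements of $C(X)$, then pulling a general positive $a$ down to $C(X)_+$ via the faithful conditional expectation) is essentially the content of \cite{M2}, which the paper simply cites; that part of your plan is sound. The problems are concentrated in how you upgrade from purely infinite to \emph{strongly} purely infinite.

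In case (1) you propose to verify the matricial condition of \cite[Definition 5.1]{Kir-Rord} directly by ``running the same partition-of-unity construction inside matrix amplifications.'' You correctly flag this as the delicate step, but it is not merely delicate: controlling the off-diagonal entries of a positive matrix over $C(X)\rtimes_r G$ by dynamically constructed isometries is a genuinely hard problem (it is the subject of the separate paper \cite{Kir-S}), and nothing in your sketch addresses it. It is also unnecessary here: since the action is minimal and topologically free the crossed product is simple, and a simple purely infinite $C^*$-algebra is automatically strongly purely infinite by \cite[Theorem 9.1]{Kir-Rord}. That is the route the paper takes for case (1), and it is the route you should take.

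In case (2) the gap is fatal as written. After obtaining the finite ideal lattice from Sierakowski's separation theorem, you assert that strong pure infiniteness ``ascends through the finite ideal filtration by \cite{Kir-Rord}.'' No such extension result is available: whether the class of strongly purely infinite $C^*$-algebras is closed under extensions is not established in \cite{Kir-Rord} and is, in general, an open problem (pure infiniteness does pass to extensions, strong pure infiniteness is not known to). You also cannot reroute through $\CO_\infty$-stability, since in case (2) the group is only assumed exact, so the crossed product need not be nuclear. The paper avoids the extension problem entirely: finitely many invariant closed sets gives finitely many ideals, hence $\operatorname{Prim}(A)$ is finite and has a basis of compact-open sets, hence $A$ has the ideal property (IP); a purely infinite $C^*$-algebra with the ideal property is strongly purely infinite by \cite[Propositions 2.11, 2.14]{P-R}. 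Replacing your extension step with this (IP) argument repairs case (2).
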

\begin{proof}
Under the assumptions, it has been proved that $C(X)\rtimes_r G$ is purely infinite in \cite{M2}. If $\alpha$ is minimal and topologically free then $C(X)\rtimes_r G$  is simple and thus strongly purely infinite. In the second case,  first it was proved in \cite[Theorem 1.20]{Si} that $C(X)$ separates the ideals of $A=C(X)\rtimes_rG$. Then since there are only finitely many $G$-invariant closed ideals in $C(X)$ by assumption,  the primitive ideal space $\operatorname{Prim}(A)$ is finite (exactly happens when $A$ has finitely many ideals) and thus has a basis of compact-open sets. Therefore $A$ also has the ideal property (IP), whence $A$ is strongly purely infinite by  \cite[Proposition 2.11, 2.14]{P-R}
\end{proof}




In the rest of this section, we mainly show that existence of contractible sets for an action, which is common in certain boundary actions, usually implies pure infiniteness of the action.

\begin{defn}
	Let $G\curvearrowright X$ be a continuous action on a compact Hausdorff space $X$. An open set $V$ in $X$ is called \textit{contractible} if there exists an $x\in X$ such that for any neighborhood $U$ of $x$, there is a $g\in G$ such that $gV\subset U$.
\end{defn}

Let $G\curvearrowright X$ be a continuous action on a compact space. If the action is minimal and $X$ is not finite, then it has to be \textit{perfect} in the sense that there is no isolated points. Suppose the contrary, there exists an $x\in X$ such that $\{x\}$ is open. Then the minimality of the action implies that $X=G\cdot x$, which has to be finite. This is a contradiction.

\begin{prop}\label{prop: contractibility implying p.i.}
Let $X$ be an infinite compact Hausdorff space. If the action $\alpha:G\curvearrowright X$ is minimal and there is a contractible open set $V$, then $\alpha$ is purely infinite.
\end{prop}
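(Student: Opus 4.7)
The plan is to verify the definition of pure infiniteness (Definition~\ref{defn: comparison-pure infiniteness}(iii)) directly. By minimality, $G\cdot V'=X$ for every non-empty open $V'\subset X$, so the requirement reduces to showing $U\prec_2 V'$ for every pair of non-empty open sets $U,V'\subset X$. Two preliminary facts will drive the argument. First, minimality forces $X$ to be perfect: an isolated point would generate a finite orbit, contradicting that $X$ is infinite. Second, contractibility combines with minimality to yield the following translation lemma: for every non-empty open $W\subset X$ there exists $g\in G$ with $gV\subset W$. Indeed, if $x$ witnesses the contractibility of $V$, minimality supplies $h\in G$ with $hx\in W$, so $h^{-1}W$ is an open neighborhood of $x$, and contractibility then produces $g'\in G$ with $g'V\subset h^{-1}W$, giving $hg'V\subset W$.

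The main technical step is to prove $X\prec O$ for every non-empty open $O\subset X$. By compactness of $X$ and minimality, choose $g_1,\dots,g_n\in G$ with $X=\bigcup_{i=1}^n g_iV$. Since $X$ is perfect Hausdorff, any non-empty open set contains arbitrarily many pairwise disjoint non-empty open subsets, so one may pick disjoint non-empty open $O_1,\dots,O_n\subset O$. The translation lemma yields $m_i\in G$ with $m_iV\subset O_i$, and setting $h_i:=m_ig_i^{-1}$ gives $h_i(g_iV)=m_iV\subset O_i$; hence $\{h_i(g_iV)\}_{i=1}^n$ is a pairwise disjoint family of open subsets of $O$, while $\{g_iV\}_{i=1}^n$ covers every compact $F\subset X$. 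This is exactly the data witnessing $F\prec O$, so $X\prec O$.

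To conclude pure infiniteness, let $U,V'$ be non-empty open sets and $F\subset U$ any compact subset. Perfectness of $X$ lets us split $V'$ into two disjoint non-empty open subsets $P_1,P_2\subset V'$; the previous step applied to each $P_j$ gives $X\prec P_j$ and therefore $F\prec P_j$ for $j=1,2$, so $F\prec_2 V'$, establishing $U\prec_2 V'$. The only real subtlety lies in the middle step, where a single open cover $\{g_iV\}$ of $X$ must be transported piece by piece into prescribed pairwise disjoint pockets of $O$; this is precisely where the contractible set $V$ earns its keep through the translation lemma.
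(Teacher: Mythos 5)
Your argument is correct and is essentially the paper's own proof: both cover $F$ (or all of $X$) by finitely many translates $g_iV$ via minimality and compactness, use perfectness of $X$ to carve disjoint pockets $O_i\subset O$, and use contractibility plus minimality to push each $g_iV$ into its own pocket, establishing $F\prec O$ for every compact $F$ and non-empty open $O$. The only cosmetic difference is that the paper then invokes Theorem~\ref{thm: pure infiniteness of action} (weak pure infiniteness implies pure infiniteness under minimality) to finish, whereas you carry out the final doubling into $P_1,P_2\subset V'$ by hand, which makes your write-up marginally more self-contained but is the same mechanism.
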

\begin{proof}
	By Theorem \ref{thm: pure infiniteness of action}, it suffices to show $F\prec O$ for any compact set $F$ and non-empty open set $O$ in $X$. Indeed, let $F, O$ be such sets.
	First, since $\alpha$ is minimal, there are $g_1,\dots, g_n\in G$ such that $F\subset \bigcup_{i=1}^ng_iV$. In addition, because $X$ is perfect, one can choose $n$ disjoint non-empty open  subsets $O_1, \dots, O_n$ of $O$.

	On the other hand, let $x_0\in X$ be an element that makes $V$ contractible. Since $\alpha$ is minimal, for each $O_i$ where $1\leq i\leq n$, there is an $h_i\in G$ and a neighborhood $U_i$ of $x_0$ such that $h_iU_i\subset O_i$. Then the contractibility of $V$ implies that there is a $t_i\in G$ such that $t_iV\subset U_i$. This implies that $t_ih_iV\subset O_i$. Therefore, one has $\bigsqcup_{i=1}^nt_ih_ig^{-1}_i(g_iV)\subset O$, which establishes $F\prec O$.
\end{proof}

We also need the following permanence result later.

\begin{prop}\label{prop: p.i. in product with trivial action}
	Let $\alpha: G\curvearrowright X$ be a purely infinite action on an infinite locally compact Hausdorff space $X$ and $\beta: H\curvearrowright Y$ an action of a discrete group $H$ on a finite set $Y$, equipped with the discrete topology. Then the product action $\alpha\times\beta$ is still purely infinite.
	 If there is an $h\in H$ such that $h\neq e_H$ but $hy=y$ for some $y\in Y$. Then the product action $\alpha\times\beta$ is not topologically free. In particular, it is not essentially free.
\end{prop}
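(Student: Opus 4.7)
The plan is to address the two assertions separately. For pure infiniteness of $\alpha\times\beta$, I would leverage the equivalence between pure infiniteness and paradoxical comparison from Theorem \ref{thm: pure infiniteness of action}, reducing the goal to showing that $O\prec_2 O$ in $X\times Y$ for every non-empty open $O$. Since $Y$ is finite and discrete, every open $O\subset X\times Y$ decomposes canonically as $O=\bigsqcup_{y\in Y} O_y\times\{y\}$ with each $O_y$ open in $X$. Applying paradoxical comparison of $\alpha$ (granted by hypothesis and the same theorem) to each non-empty $O_y$, I obtain disjoint non-empty open $A_y, B_y\subset O_y$ with $O_y\prec A_y$ and $O_y\prec B_y$ with respect to $\alpha$. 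Then $A=\bigsqcup_y A_y\times\{y\}$ and $B=\bigsqcup_y B_y\times\{y\}$ are disjoint open subsets of $O$, and it remains to lift the subequivalences to the product.

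The lifting is the key step. Given a compact $F=\bigsqcup_y F_y\times\{y\}\subset O$ with $F_y\subset O_y$ compact, each relation $F_y\prec A_y$ in $X$ yields a finite open cover $\{W_{y,j}\}_j$ of $F_y$ and elements $\{g_{y,j}\}_j\subset G$ such that $\{g_{y,j}W_{y,j}\}_j$ is a disjoint family contained in $A_y$. The group elements $(g_{y,j}, e_H)\in G\times H$, applied to the open sets $W_{y,j}\times\{y\}$, produce a finite open cover of $F$ whose translates lie in $A$; the disjointness of the translates for different indices $(y,j)$ is automatic, being inherited from $\alpha$ when $y$ is fixed and from the distinctness of the second coordinates when $y$ varies. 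Thus $O\prec A$, and analogously $O\prec B$; since $A\cap B=\emptyset$, we obtain $O\prec_2 O$. Hence $\alpha\times\beta$ has paradoxical comparison, and therefore is purely infinite.

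For the failure of topological freeness, I would take a non-trivial $h\in H$ fixing some $y\in Y$ and consider $(e_G, h)\in G\times H$, which is non-trivial since $h\neq e_H$. Its fixed point set in $X\times Y$ equals $X\times Y^h$, where $Y^h=\{z\in Y: hz=z\}$, and this contains the non-empty open set $X\times\{y\}$ because $\{y\}$ is open in the discrete space $Y$. Consequently this fixed point set has non-empty interior and is not nowhere dense, so $\alpha\times\beta$ is not topologically free. As essential freeness would force topological freeness by taking $X\times Y$ itself as the closed invariant subset, the product action also fails to be essentially free. The only subtlety in the whole argument is the bookkeeping in the lifting step, which is trivialised by the discrete structure on $Y$ forcing distinct slices to be automatically disjoint, so that the $H$-component is never needed beyond the identity.
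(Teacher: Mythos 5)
Your proposal is correct and follows essentially the same route as the paper: decompose open and compact sets along the clopen slices $X\times\{y\}$, apply paradoxical comparison of $\alpha$ slice by slice, lift with group elements of the form $(g,e_H)$, and exhibit the open fixed set $X\times\{y\}$ for $(e_G,h)$ to kill topological freeness. The one small slip is the quantifier order in the first paragraph: paradoxical comparison only provides the disjoint sets $A_y,B_y$ \emph{after} a compact $F_y\subset O_y$ has been fixed (they may depend on $F_y$), not uniformly for all compacts at once; since $O\prec_2 O$ only requires such sets for each compact $F$ separately, reordering as in the paper (fix $F$, decompose, then choose $A_y,B_y$) repairs this without changing anything else.
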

\begin{proof}
	Write $Y=\{y_1,\dots, y_m\}$ and it suffices to verify paradoxical comparison by Theorem \ref{thm: pure infiniteness of action}. For any non-empty open set $O\subset X\times Y$, there are open sets $O_i\subset X$ for each $1\leq i\leq m$ such that $O=\bigsqcup_{i=1}^mO_i\times\{y_i\}$ (some $O_i$ may be empty). One can do this by observing that $O_i\times \{y_i\}=O\cap \pi^{-1}_Y(\{y_i\})$. For any compact set $F\subset O$, using the same trick and the fact $\pi^{-1}_Y(\{y_i\})$ is clopen in $X\times Y$, one can find compact sets $F_i\subset O_i$ for $i=1,\dots, m$ such that $F=\bigsqcup_{i=1}^mF_i\times \{y_i\}$, where $\pi_Y$ is the natural projection from $X\times Y$ to $Y$. Then since $\alpha$ is purely infinite, for each $i$ with $O_i\neq \emptyset$, there are disjoint non-empty open sets $U_{i, 1}, U_{i, 2}\subset O_i$ such that  $F_i\prec U_{i, j}$ for both $j=1, 2$, which implies that there are collections of open sets $\{V^{i, j}_k\subset X: k=1,\dots, n_{i, j}\}$ and group elements $\{g^{i, j}_k\in G: k=1,\dots, n_{i, j}\}$ such that $F_i\subset \bigcup_{k=1}^{n_{i,j}}V^{i, j}_k$ and $\bigsqcup_{k=1}^{n_{i,j}}g^{i, j}_kV^{i, j}_k\subset U_{i, j}$ for $j=1, 2$. 
	
	We denote by $I=\{i\leq m: O_i\neq \emptyset\}$. Observe that $U_j=\bigsqcup_{i\in I}U_{i, j}\times \{y_i\}\subset O$ for $j=1,2$ and $U_1, U_2$ are disjoint non-empty open sets. In addition, one has 
	\[F=\bigsqcup_{i=1}^mF_i\times \{y_i\}\subset \bigcup_{i\in I}\bigcup_{k=1}^{n_{i,j}}(V^{i, j}_k\times\{y_i\})\]
	and
	\[\bigsqcup_{i\in I}\bigsqcup_{k=1}^{n_{i, j}}(g^{i, j}_k, e_H)(V^{i, j}_k\times \{y_i\})\subset \bigsqcup_{i\in I}U_{i, j}\times \{y_i\}=U_j\]
	for $j=1,2$. These establish that $\alpha\times \beta$ is purely infinite.
	
	Now, choose an $h\in H$ with $h\neq e_H$ and $y\in Y$ such that $hy=y$. Then $(e_G, h)$ fixes $X\times \{y\}$, which is open in $X\times Y$. This
	entails that $\alpha\times \beta$ is not topologically free and thus not essentially free.
\end{proof}

\section{Roller Boundary $\CR(X)$ and Nevo-Sageev Boundary $B(X)$}
In this section, we mainly study group actions on $\cat$ cube complexes $X$ and their Roller boundary $\CR(X)$ as well as a particular subset $B(X)$ of $\CR(X)$, which was introduced by Nevo and Sageev in \cite{N-S}. These two boundaries are of combinatorial nature. We begin with recalling the necessary concepts. We refer to \cite{N-S} and \cite{Le} for more information.

We denote by $\hat{\CH}$ the collection of all hyperplanes of $X$ and $\CH$ the set of all halfspaces. Similar to Stone-\u{C}ech compactification, one can use \textit{ultrafilers} consisting of certain halfspaces to define the \textit{Roller compactification}. See \cite{Roller}. Recall an ultrafilter $\alpha$ on $\CH$ is a subset of $\CH$ satisfying
\begin{enumerate}
	\item For any hyperplane $\hat{h}$, either $h\in \alpha$ or $h^*\in \alpha$ and
	\item If $h\in \alpha$ and $h\subset h'$ then $h'\in \alpha$.
\end{enumerate}
We denote by $\CU(X)$ the collection of all ultrafilters on $X$, which can be viewed as a closed subset of $\prod_{\hat{h}\in \hat{\CH}}\{h, h^*\}$ and thus a compact metrizable space if $X$ is locally finite. In addition, by identifying each vertex $x\in X^0$ by the  principal ultrafilter $\alpha_x=\{h\in \CH: x\in h\}$, the \textit{Roller boundary} is defined to be $\CR(X)=\CU(X)\setminus X^0$, which is also a compact metrizable space if $X$ is locally finite. Nevo and Sageev in \cite{N-S} also consider the following special subset $B(X)$ of $\CR(X)$, which is referred as the \textit{Nevo-Sageev boundary} in this paper. Consider the following set $\CU_{NT}(X)$ consisting all non-terminating ultrafilters:
\[\CU_{NT}(X)=\{\alpha\in\CU(X): h\in \alpha \Rightarrow \text{there exsits } h'\in \alpha\ \text{with }h'\subsetneq h\}\]
and define $B(X)=\overline{\CU_{NT}(X)}$ in $\CU(X)$. Such a boundary is always non-empty if $X$ is essential and cocompact by \cite[Theorem 3.1]{N-S}. Unlike the visual boundary that will be addressed in the next section, $B(X)$ has a very nice property that if $X$ is not irreducible and decomposes as $X=\prod_{i=1}^nX_i$, then $B(X)=\prod_{i=1}^nB(X_i)$ so that the dynamics on the $B(X)$ is more convenient to deal with.

Let $\Gamma$ be a finite simple graph and $W_\Gamma$ and $A_\Gamma$ be its RACG and RAAG, respectively.  Following the notation in \cite{Char}, we denote by $\Sigma_\Gamma$  the Davis complex for $W_\Gamma$ and $\tilde{S_\Gamma}$ the universal cover of the Salvetti complex $S_\Gamma$ for $A_\Gamma$. Note that both of $\Sigma_\Gamma$ and $\tilde{S_\Gamma}$ are finite dimensional $\cat$ cube complexes and the $1$-skeleton of them are exactly the Cayley graph of $W_\Gamma$ and $A_\Gamma$, respectively. From this view, for Davis complex $\Sigma_\Gamma$, the set of vertices $C\subset W_\Gamma$ spans a cube if and only if it forms a coset $wW_\Lambda$ for some \textit{finite} special subgroup $W_\Lambda$. Here $\Lambda$ is allowed to be empty in which case $wW_\Lambda=\{w\}$.

Such a graph theoretical description of $\tilde{S_\Gamma}$ is analogous to that of $\Sigma_\Gamma$. In fact, for any special subgroup $W_\Lambda$ of the Coxeter group $W_\Gamma$, with generating vertices $\{v_1,\dots, v_k\}\subset V$, one can lift $W_\Lambda$ to the subset $\widehat{W_\Lambda}\subset A_\Lambda$ (not subgroup) consisting of elements of the form $v^{\epsilon_1}_1\cdots v^{\epsilon_k}_k$ where $\epsilon_i=0, 1$. Then $\tilde{S_\Gamma}$ is the cube complex whose $1$-skeleton is the Cayley graph of $A_\Gamma$ and whose cubes are set of vertices of the form $a\widehat{W_\Lambda}$ for some $a\in A_\Gamma$ and some finite special subgroup $W_\Lambda$. 

Both $\Sigma_\Gamma$ and $\tilde{S_\Gamma}$ have very nice properties. write $X_\Gamma=\Sigma_\Gamma$ or $\tilde{S_\Gamma}$ for simplicity.  First,  $X_\Gamma$ is always cocompact since $G_\Gamma$ acting on it cocompactly. Using the fact that $1$-skeleton of $X_\Gamma$ is exactly the Cayley graph of the corresponding group $W_\Gamma$ or $A_\Gamma$, the edges $X^1_\Gamma$ of $X_\Gamma$ naturally are labeled by the vertex set $V$ of the defining graph $\Gamma$. To observe more, for any vertex $x\in X_\Gamma^0$ ($x$ thus actually belongs to $W_\Gamma$ or $A_\Gamma$), the $1$-skeleton of the link of $x$ is isomorphic to the defining graph $\Gamma$. In addition, the edges in the $1$-skeleton of the link of $x$ labeled by a subset $\{v_{k_1}, \dots, v_{k_m}\}$ of $V$ belong to the same cube if and only if $(v_{k_i}, v_{k_j})\in E$ for any $1\leq i, j\leq m$. Moreover, the labels of edges dual to one hyperplane $\hat{h}$ are all the same $v\in V$, which is called the \textit{type} of $\hat{h}$. Finally, If in $X_\Gamma$, two hyperplanes $\hat{h}_1\cap \hat{h}_2\neq\emptyset$, then their types $v_1$ and $v_2$ satisfy $(v_1, v_2)\in E$. See more in \cite[Section 8]{Le}. For essentialness of $X_\Gamma$, the following theorem was also proved in \cite{Le}. For a simple graph $\Gamma=(V, E)$, the graph $\Gamma^c=(V, E^c)$, in which $E^c=\{(u, v):u, v\in V\ \textit{and }  (u, v)\notin E\}$ is called its  \textit{complemented graph}.

\begin{prop}\cite[Proposition 8.1, Lemma 8.3]{Le}\label{prop: essential racg and raag}
Let $\Gamma$ be a finite simple graph. The Davis simplex $\Sigma_\Gamma$ of the corresponding RACG $W_\Gamma$ is essential if and only if the complement graph $\Gamma^c$ does not have an isolated vertex. In addition, the universal cover of the Salvetti complex $\tilde{S_\Gamma}$ of the corresponding RAAG $A_\Gamma$ is always essential.
\end{prop}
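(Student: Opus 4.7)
The plan is to handle the two parts separately, noting that the key difference between the RACG and RAAG cases is that generators of $W_\Gamma$ have order $2$ whereas generators of $A_\Gamma$ have infinite order, and this distinction governs whether factoring off a ``star vertex'' produces a finite or an infinite direct factor.

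For part (1), I would prove the contrapositive of $(\Leftarrow)$ first: if some $v \in V$ is isolated in $\Gamma^c$, then $v$ is adjacent in $\Gamma$ to every other vertex, so $v$ commutes with every generator of $W_\Gamma$. Thus $\Gamma = \{v\} \star \Gamma'$ and $W_\Gamma \cong \mathbb{Z}/2 \times W_{\Gamma'}$, so $\Sigma_\Gamma = \Sigma_{\{v\}} \times \Sigma_{\Gamma'}$ where $\Sigma_{\{v\}}$ is a single $1$-cube. The unique hyperplane of this factor yields a halfspace in $\Sigma_\Gamma$ whose vertices all sit at combinatorial distance $\tfrac{1}{2}$ from the hyperplane, contradicting essentialness. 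For the forward direction $(\Rightarrow)$, suppose no vertex of $\Gamma$ is isolated in $\Gamma^c$. By cocompactness of the $W_\Gamma$-action and transitivity on edges of each given type, it suffices to verify essentialness for the hyperplane $\hat{h}_v$ between $e$ and $v$, for every $v \in V$. Fix such $v$; by hypothesis there exists $u \in V$ with $(u,v) \notin E$. Using the combinatorial fact recalled in the excerpt that two hyperplanes of $\Sigma_\Gamma$ intersect iff their types are adjacent in $\Gamma$, the hyperplanes $\hat{h}_u$ and $\hat{h}_v$ are disjoint. Since $u$ and $v$ act as reflections through $\hat{h}_u$ and $\hat{h}_v$ respectively, the standard CAT(0) fact that the product of two reflections through disjoint hyperplanes is a hyperbolic isometry gives that $g = uv$ has an axis realizing the common perpendicular between $\hat{h}_u$ and $\hat{h}_v$, with translation length $2\,d(\hat{h}_u,\hat{h}_v) > 0$. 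The orbit $\{g^n \cdot e : n \in \mathbb{Z}\}$ then stays within bounded distance of this axis, and because the axis crosses $\hat{h}_v$ transversally, the orbit furnishes vertices in both halfspaces of $\hat{h}_v$ at distance from $\hat{h}_v$ tending to infinity as $n \to \pm\infty$.

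For part (2), the RAAG case is more direct because even when $v$ is isolated in $\Gamma^c$ the resulting direct factor of $A_\Gamma$ is $\mathbb{Z}$, which is itself unbounded. Concretely, for any type $v \in V$, the cyclic subgroup $\langle v \rangle \leq A_\Gamma$ has infinite order, and the set $\{v^n : n \in \mathbb{Z}\}$ forms a bi-infinite combinatorial geodesic in the Cayley graph (hence in $\tilde{S}_\Gamma^{1}$, which is quasi-isometric to $\tilde{S}_\Gamma$ by Lemma \ref{lem: quasi-isometric}). This geodesic crosses $\hat{h}_v$ exactly once between $e$ and $v$, and one checks that $d(v^n, \hat{h}_v) \to \infty$ as $|n| \to \infty$ with $v^n$ and $v^{-m}$ on opposite sides of $\hat{h}_v$ for $n, m > 0$. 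Cocompactness of the $A_\Gamma$-action and the transitivity across hyperplanes of the same type then extend this to arbitrary hyperplanes, establishing essentialness.

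The main obstacle is in part (1) forward direction, namely confirming that in the CAT(0) cube complex $\Sigma_\Gamma$ the product $uv$ of two reflections through disjoint hyperplanes genuinely acts as a hyperbolic translation whose orbits provide deep penetration into both halfspaces of $\hat{h}_v$. This is not hard once one invokes the flat strip theorem and the CAT(0) classification of isometries, but care is needed because the base vertex $e$ need not lie on the axis of $uv$, so one argues using Hausdorff proximity of the orbit to the axis rather than exact incidence. Everything else follows from the standard translation between direct-product decompositions of the groups and the corresponding product decompositions of $\Sigma_\Gamma$ and $\tilde{S}_\Gamma$.
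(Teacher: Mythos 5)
Your argument is essentially correct, but note that the paper does not prove Proposition \ref{prop: essential racg and raag} at all: it is imported verbatim from \cite{Le} (Proposition 8.1 and Lemma 8.3 there), so your proposal is a self-contained substitute rather than a variant of an argument in the text. Your reduction to the standard hyperplanes $\hat{h}_v$ dual to $\{e,v\}$ via transitivity on edges of a fixed type is fine, the non-essential direction via the product $\Sigma_\Gamma=\Sigma_{\{v\}}\times\Sigma_{\Gamma'}$ is correct, and the RAAG case via the combinatorial geodesic $\{v^n\}$ (each hyperplane crossed at most once by a geodesic, so the $n$ walls crossed by $e,v,\dots,v^n$ are distinct and separate $v^n$ from $\hat{h}_v$) is clean. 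The one place where you lean on machinery that deserves explicit justification is the forward direction of (1): you need not only that $\hat{h}_u\cap\hat{h}_v=\emptyset$ when $(u,v)\notin E$ (which follows from the type criterion recalled in the paper), but also that the generators of $W_\Gamma$ act as genuine reflections, i.e.\ involutions whose fixed-point sets are exactly the corresponding walls and which exchange the two halfspaces; this is a specific feature of the Davis complex (see \cite{Dav}) and is exactly where the RACG structure enters. Granting that, the semisimplicity of $uv$ and the orthogonality of its axis to $\hat{h}_v$ do give $d(g^n e,\hat{h}_v)\geq 2n\,d(\hat{h}_u,\hat{h}_v)-d(e,q)\to\infty$ with $g^{n}e$ and $g^{-n}e$ on opposite sides, so the argument closes. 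A more elementary route, parallel to your own treatment of the RAAG case and avoiding the flat strip theorem entirely, is to observe that $W_{\{u,v\}}\cong D_\infty$ is a convex special subgroup, so $e,v,vu,vuv,\dots$ and $e,u,uv,uvu,\dots$ are combinatorial geodesics in $\Sigma_\Gamma^{1}$ crossing pairwise distinct, pairwise disjoint walls; counting the walls separating $(vu)^n$ (resp.\ $(uv)^n$) from $\hat{h}_v$ and invoking Lemma \ref{lem: quasi-isometric} gives unbounded depth in both halfspaces without any appeal to the classification of isometries.
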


Therefore $B(\tilde{S_\Gamma})$ is always not empty  and so are  $B(\Sigma_\Gamma)$ whenever $\Gamma^c$ has no isolated vertex. We now study the reducibility of  $\tilde{S_\Gamma}$ and $\Sigma_\Gamma$. Let $\Gamma=(V, E)$. Simply observe that $\Gamma^c$ has an isolated vertex $v$ if and only if $(v, w)\in E$ for any $w\in V\setminus\{v\}$ if and only if $W_\Gamma$ has a factor of $\Z_2$ as a special subgroup, i.e., $W_\Gamma=W_{\Gamma'}\times \Z_2$ for some subgraph $\Gamma'$. Therefore, the Davis complex $\Sigma_\Gamma$ is essential if and only if $W_\Gamma$ has no factor $\Z_2$ as a special subgroup. 

\begin{lem}\cite[Lemma 2.5]{C-S}\label{lem: direct product}
	A decomposition of a $\cat$ cube complex $X$ as a product of cube complexes corresponds to a partition of the collection of hyperplanes of $X$, $\hat{H}=\hat{H}_1\sqcup \hat{H}_2$ such that every hyperplane in $\hat{H}_1$ meets every hyperplane in $\hat{H}_2$.
\end{lem}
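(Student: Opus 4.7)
The plan is to establish both directions of the equivalence, with the forward direction being elementary and the reverse direction requiring Sageev's pocset-to-cube-complex duality.

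For the forward direction, suppose $X = X_1 \times X_2$. Every cube of $X$ has the form $C_1 \times C_2$ with $C_i$ a cube of $X_i$, and every midcube has either the form $m_1 \times C_2$ or $C_1 \times m_2$ where $m_i$ is a midcube of $C_i$. Consequently, the hyperplanes of $X$ split cleanly into those of the form $\hat{h}_1 \times X_2$ for $\hat{h}_1$ a hyperplane of $X_1$, and those of the form $X_1 \times \hat{h}_2$ for $\hat{h}_2$ a hyperplane of $X_2$. Let $\hat{H}_i$ denote the corresponding collection; the crossing property is immediate since $(\hat{h}_1 \times X_2)\cap(X_1\times \hat{h}_2) = \hat{h}_1\times \hat{h}_2 \neq \emptyset$.

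For the reverse direction, I would use the Sageev duality between discrete pocsets of halfspaces and $\cat$ cube complexes. Each class $\hat{H}_i$ inherits a pocset structure from the halfspace inclusion and the involution $h\mapsto h^*$ coming from $\CH$, and the crossing hypothesis implies the two pocsets are ``independent'' in the sense that for any $h_1 \in \hat{H}_1$-halfspace and $h_2\in \hat{H}_2$-halfspace all four intersections $h_1\cap h_2$, $h_1 \cap h_2^*$, $h_1^* \cap h_2$, $h_1^* \cap h_2^*$ are nonempty. Let $X_i$ be the cube complex dual to $\hat{H}_i$. For each vertex $v\in X^0$ with principal ultrafilter $\alpha_v\subset \CH$, restrict to get $\alpha_v^i := \alpha_v \cap \{\text{halfspaces of hyperplanes in } \hat{H}_i\}$, which is a consistent DCC ultrafilter on $\hat{H}_i$ and hence a vertex of $X_i$. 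This defines a map $\Phi: X^0 \to X_1^0 \times X_2^0$.

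The key claim, which is the main obstacle, is that $\Phi$ is a bijection that extends to a cubical isomorphism $X \cong X_1 \times X_2$. Injectivity is clear since $\alpha_v$ is recovered from $(\alpha_v^1,\alpha_v^2)$. For surjectivity one takes any pair of DCC ultrafilters $\beta_i$ on $\hat{H}_i$ and shows $\beta_1 \cup \beta_2$ is a DCC consistent ultrafilter on $\hat{H}$: completeness and upward closure are inherited from each factor, and consistency (no two halfspaces are disjoint) follows precisely from the crossing hypothesis, which prevents any nesting between an $\hat{H}_1$-halfspace and an $\hat{H}_2$-halfspace. The DCC condition also passes to the union because any descending chain in $\hat{H}$ splits into descending subchains in $\hat{H}_1$ and $\hat{H}_2$, both of which stabilize.

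Finally, to verify that $\Phi$ respects the cubical structure, note that a collection of hyperplanes spans a cube of $X$ if and only if they pairwise cross and determine consistent halfspaces at a common vertex. Partitioning such a collection into its $\hat{H}_1$ and $\hat{H}_2$ parts shows that cubes of $X$ correspond bijectively to pairs consisting of a cube of $X_1$ and a cube of $X_2$; the crossing hypothesis is exactly what ensures that the $\hat{H}_1$-part and $\hat{H}_2$-part automatically cross one another, so the product structure on cubes is recovered. This identifies $X$ as a cubical product $X_1\times X_2$ and completes the proof.
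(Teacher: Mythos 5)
The paper does not actually prove this lemma; it is quoted verbatim from Caprace--Sageev \cite[Lemma 2.5]{C-S}, so there is no internal argument to compare yours against. Judged on its own, your proof follows the standard duality route and is essentially correct. The forward direction is fine. In the reverse direction, three standard facts are doing the real work and are used only implicitly, so you should at least name or cite them: (a) two hyperplanes of a $\cat$ cube complex intersect as subspaces if and only if all four intersections $h_1\cap h_2$, $h_1\cap h_2^*$, $h_1^*\cap h_2$, $h_1^*\cap h_2^*$ are nonempty --- this is what converts the hypothesis that every member of $\hat{H}_1$ meets every member of $\hat{H}_2$ into the pocset-level transversality you invoke for consistency; (b) Roller--Sageev duality identifying $X$ itself with the cube complex dual to its own halfspace pocset, which is what lets you conclude that the DCC ultrafilter $\beta_1\cup\beta_2$ is principal, i.e.\ equal to $\alpha_v$ for an actual vertex $v\in X^0$ --- without this identification the surjectivity of $\Phi$ does not follow; and (c) the finite-difference lemma for DCC ultrafilters, which guarantees that the dual complexes $X_1$ and $X_2$ are connected and hence genuine cube complexes. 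Your observation that no inclusion can hold between a halfspace bounded by a hyperplane of $\hat{H}_1$ and one bounded by a hyperplane of $\hat{H}_2$ (crossing hyperplanes are never nested) is exactly the right point, and it settles both consistency and the descending chain condition for $\beta_1\cup\beta_2$ at once. The final identification of cubes of $X$ with pairs of cubes is also correct: a cube is determined by a pairwise-crossing family of hyperplanes together with a corner vertex, and splitting such a family into its $\hat{H}_1$- and $\hat{H}_2$-parts, which automatically cross one another by hypothesis, recovers the product cube structure. In short, the proposal is a sound, if compressed, proof of the cited result.
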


Now we have the following result, which seems well-known to experts. However, to be self-contained, we include the proof here.

\begin{prop}\label{prop: decomposition cub cpx}
	Let $\Gamma=(V, E)$ be a simple finite graph with $|V|\geq 2$ and $X_\Gamma=\Sigma_\Gamma$ or $\tilde{S_\Gamma}$. Then $X_\Gamma$ can be written as a direct product of $\cat$ cube complexes if and only if $\Gamma$ is a join.
\end{prop}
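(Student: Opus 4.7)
The plan is to translate a product decomposition of $X_\Gamma$ into a partition of the vertex set $V$ via the type-labelling of hyperplanes, and conversely to build the decomposition of $X_\Gamma$ from a join decomposition of $\Gamma$ by invoking the product structure of RACGs and RAAGs.

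For the easy direction, if $\Gamma=\Gamma_1\star\Gamma_2$ then every generator from $V_1$ commutes with every generator from $V_2$, so the presentation of $G_\Gamma$ splits as $G_{\Gamma_1}\times G_{\Gamma_2}$. It is a standard fact that the Davis complex of a direct product of RACGs is the cubical product of the individual Davis complexes, and likewise for the universal cover of the Salvetti complex of a RAAG. This immediately yields the decomposition $X_\Gamma=X_{\Gamma_1}\times X_{\Gamma_2}$ as $\cat$ cube complexes.

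For the converse, suppose $X_\Gamma$ admits a non-trivial decomposition as a product of $\cat$ cube complexes. By Lemma \ref{lem: direct product} the hyperplane set partitions non-trivially as $\hat{\CH}=\hat{\CH}_1\sqcup \hat{\CH}_2$ with every hyperplane in $\hat{\CH}_1$ meeting every hyperplane in $\hat{\CH}_2$. The key observation is that two hyperplanes in $X_\Gamma$ with the same type cannot intersect: indeed, as recalled before Proposition \ref{prop: essential racg and raag}, intersecting hyperplanes have types $v_1,v_2$ satisfying $(v_1,v_2)\in E$, and since $\Gamma$ is simple it has no loops, forcing $v_1\neq v_2$. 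Consequently, for each $v\in V$ all hyperplanes of type $v$ lie in a common piece of the partition, which allows me to define
\[
V_i=\{v\in V:\text{some hyperplane of type }v\text{ belongs to }\hat{\CH}_i\},\qquad i=1,2,
\]
and to conclude $V_1\cap V_2=\emptyset$. Since every generator of $G_\Gamma$ acts by a non-trivial translation on the Cayley graph $X_\Gamma^1$, every $v\in V$ labels at least one edge and hence is the type of at least one hyperplane; thus $V=V_1\sqcup V_2$. Non-triviality of the decomposition gives $\hat{\CH}_i\neq\emptyset$, and hence $V_i\neq\emptyset$.

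To finish, pick any $v\in V_1$ and $w\in V_2$ and take hyperplanes $\hat{h}_v\in\hat{\CH}_1$ and $\hat{h}_w\in\hat{\CH}_2$ of the corresponding types. By Lemma \ref{lem: direct product} they intersect, so by the type-intersection rule $(v,w)\in E$. Hence every vertex of $V_1$ is adjacent to every vertex of $V_2$, i.e., $\Gamma$ is the join of the induced subgraphs on $V_1$ and $V_2$. The only delicate point in the argument is checking that every vertex of $\Gamma$ genuinely appears as the type of a hyperplane and that the type classes respect the partition; both are immediate from the explicit description of $\Sigma_\Gamma$ and $\tilde{S_\Gamma}$ as cubulated Cayley graphs given earlier in the section.
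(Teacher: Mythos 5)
Your proof is correct and follows essentially the same route as the paper: both directions rest on Lemma \ref{lem: direct product} together with the rule that intersecting hyperplanes have adjacent types. The one (welcome) refinement is your observation that, since $\Gamma$ has no loops, two hyperplanes of the same type can never cross, so each whole type class lands in a single piece of the partition; the paper instead just selects one representative hyperplane per type, which suffices but is slightly less transparent.
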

\begin{proof}
	Suppose $X$ is a direct product. Then Lemma \ref{lem: direct product} implies that one can partition the whole $\hat{H}=\hat{H}_1\sqcup \hat{H}_2$ non-trivially such that $\hat{h}_1\cap\hat{h}_2\neq \emptyset$ for any  $\hat{h}_1\in \hat{H}_1$ and $\hat{h}_2\in \hat{H}_2$.  Now, observe that there are $\hat{h}_{1}\in \hat{H}_1$  and $\hat{h}_{2}\in \hat{H}_2$ with different types $v$ and $w$, respectively.  Otherwise, there is only one type for all hyperplanes in $\hat{H}$, which means $|V|=1$. This is a contradiction.  
	
	We then define $J_{1, 0}=\{\hat{h}_1\}$ and $J_{2, 0}=\{\hat{h}_2\}$. Now we enumerate $V\setminus \{v, w\}$ by $\{v_1, v_2,\dots, v_n\}$. Suppose we have defined $J_{1, m}$ and $J_{2, m}$ for $0\leq m<n$. Then for $v_{m+1}$, choose $\hat{h}\in \hat{H}$ with type $v_{m+1}$. Observe that either  $\hat{h}\in \hat{H}_1$ or $\hat{H}_2$. Then for $i, j=1, 2$ and $i\neq j$, define $J_{i, m+1}=J_{i ,m}\cup\{\hat{h}\}$ if $\hat{h}\in \hat{H}_i$ and $J_{j, m+1}=J_{j, m}$. 
	
	Finally one defines $V_i=\{v\in V: \text{there is a }\hat{h}\in J_{i, n} \text{ of type }v\}$ for $i=1, 2$. Our construction implies $V_1\sqcup V_2=V$. In addition, because $J_{i, n}\subset \hat{H}_i$ for $i=1, 2$, one has $\hat{h}_1\cap \hat{h}_2\neq \emptyset$ for any $\hat{h}_i\in J_{i, n}$ for $i=1,2$, which implies that $(v_1, v_2)\in E$ for any $v_1\in V_1$ and $v_2\in V_2$. This means
	that $\Gamma$ is a join. The converse direction is trivial by observing $X_\Gamma=X_{\Gamma_1}\times X_{\Gamma_2}$ whenever $\Gamma$ has a non-trivial join $\Gamma=\Gamma_1\star\Gamma_2$ for some subgraphs $\Gamma_1$ and $\Gamma_2$.
\end{proof}

Now if $X_\Gamma$ can be written as a non-trivial direct product, then $\Gamma$ has a non-trivial join, e.g., $\Gamma=\Gamma_1\star \Gamma_2$, which implies $X_\Gamma=X_{\Gamma_1}\times X_{\Gamma_2}$. If one of these $\Gamma_i$, $i=1, 2$, is still a join, we can decompose $X_{\Gamma_i}$ further in the same manner. Following this strategy, since $\Gamma$ is finite, one can decompose $X_\Gamma=X_{\Gamma_1}\times\dots\times X_{\Gamma_m}$, in which each factor is irreducible. We remark that such a factorization is unique up to a permutation of factors. See the proof of \cite[Proposition 2.6]{C-S}. Then the natural action of $W_\Gamma\curvearrowright \Sigma_\Gamma$ is exactly the product of all actions $W_{\Gamma_i}\curvearrowright \Sigma_{\Gamma_i}$, i.e. $W_\Gamma=W_{\Gamma_1}\times\dots \times W_{\Gamma_m}\curvearrowright \Sigma_{\Gamma_1}\times\dots \times \Sigma_{\Gamma_m}$ coordinatewise.  The same also holds for $A_\Gamma$. We now establish the following result on the structure of $X_\Gamma$. We first recall concepts in \cite[Section 4.4]{C-S} that a $\cat$ cube complex $X$ is called $\R$-\textit{like} if there is an $\aut(X)$-invariant geodesic line $\ell\subset X$. In addition, we remark that if $\aut(X)$ acts cocompactly, then $X$ is quasi-isometric to the real line $\R$.

\begin{lem}\label{lem: structure of cpx}
Let $X_\Gamma=\Sigma_\Gamma$ or $ \tilde{S_\Gamma}$ such that $X_\Gamma$ is essential. Let $X_\Gamma=X_{\Gamma_1}\times\dots\times X_{\Gamma_m}$ be a decomposition of $X_\Gamma$ into irreducible factors described above. Suppose one $X_{\Gamma_i}$ is Euclidean. Then 
\begin{enumerate}
\item in the case $X_{\Gamma_i}=\Sigma_{\Gamma_i}$ one has $W_{\Gamma_i}\simeq D_\infty$; and
\item in the case  $X_{\Gamma_i}=\tilde{S_{\Gamma_i}}$ one has $A_{\Gamma_i}\simeq \Z$.
\end{enumerate}
\end{lem}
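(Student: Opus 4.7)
The plan is to reduce, via the invariant flat and Bieberbach's theorem, to showing $G_{\Gamma_i}$ is virtually $\Z^n$ for some $n\geq 1$, and then to read off the exact group from the combinatorial structure of $\Gamma_i$. Write $G=G_{\Gamma_i}$ and $X=X_{\Gamma_i}$, and let $F\subset X$ be an $\aut(X)$-invariant flat of dimension $n$; since $X$ is unbounded and $G$ acts cocompactly, $G$ is infinite and hence $n\geq 1$. The group $G\leq \aut(X)$ acts on $X$ properly and cocompactly (the RACG action on the Davis complex has finite cube-stabilizers, while the RAAG action on the universal cover of the Salvetti complex is free), and since $F$ is closed and $G$-invariant, the restricted isometric action of $G$ on $F$ is both cocompact ($F/G$ is a closed subspace of the compact quotient $X/G$) and proper (compact subsets of $F$ are compact in $X$). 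The kernel of the induced homomorphism $G\to \isom(F)\cong \isom(\mathbb{E}^n)$ sits inside the stabilizer of any point of $F$, so it is finite; the image is therefore a discrete cocompact subgroup of $\isom(\mathbb{E}^n)$, and Bieberbach's theorem yields that the image, and hence $G$ itself, is virtually $\Z^n$.

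In the RAAG case, $A_{\Gamma_i}$ being virtually $\Z^n$ forbids any rank-two nonabelian free subgroup. However, any two non-adjacent vertices of $\Gamma_i$ generate a copy of $F_2$ inside $A_{\Gamma_i}$; hence $\Gamma_i$ is complete and $A_{\Gamma_i}\cong \Z^{|V_i|}$. Because a complete graph on at least two vertices is a join, irreducibility of $\Gamma_i$ forces $|V_i|=1$, and $A_{\Gamma_i}\cong \Z$ as claimed.

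In the RACG case, virtual $\Z^n$-ness of $W_{\Gamma_i}$ similarly prohibits $F_2$. A short case analysis of the four three-vertex graphs shows that any induced subgraph of $\Gamma_i$ on three vertices with at most one edge yields a special subgroup containing $F_2$; hence every triple of vertices of $\Gamma_i$ spans at least two edges. Equivalently, the complement graph $\Gamma_i^c$ has maximum degree at most one and so is a disjoint union of single edges and isolated vertices. This expresses $\Gamma_i$ as a join whose pieces are either non-adjacent pairs (contributing $D_\infty$ factors to $W_{\Gamma_i}$) or single vertices (contributing $\Z_2$ factors), so $W_{\Gamma_i}$ is a direct product of copies of $D_\infty$ and $\Z_2$. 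Irreducibility of $\Gamma_i$ collapses this product to a single factor, and the essentialness of $X_\Gamma$ rules out the $\Z_2$ possibility via Proposition~\ref{prop: essential racg and raag} and the discussion immediately after it, leaving $W_{\Gamma_i}\cong D_\infty$.

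The delicate point is the very first step: one must verify carefully that restricting to $F$ preserves properness and cocompactness and that the image in $\isom(\mathbb{E}^n)$ is discrete, so that Bieberbach's theorem actually applies. The subsequent classifications of virtually abelian RAAGs and of virtually abelian RACGs are then routine.
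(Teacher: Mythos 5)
Your proof is correct, but it takes a genuinely different route from the paper's. The paper first invokes the rank-rigidity machinery of Caprace--Sageev (their Lemma 7.1) to conclude that an irreducible, essential, Euclidean cube complex with cocompact essential automorphism group is $\R$-like, hence quasi-isometric to $\R$; this makes $G_{\Gamma_i}$ two-ended, and the RACG case is then finished by Davis's structure theorem for two-ended Coxeter groups (finite $\times$ $D_\infty$, with the finite factor killed by irreducibility via Proposition \ref{prop: decomposition cub cpx}), while the RAAG case uses Baudisch's theorem that two-generator subgroups of RAAGs are free or abelian together with amenability. You instead bypass rank rigidity entirely: you restrict the proper cocompact action to the invariant flat, check that the restricted action is still proper and cocompact with finite kernel and discrete image, and apply Bieberbach to get that $G_{\Gamma_i}$ is virtually $\Z^n$ (in fact all you use downstream is that it is amenable, hence contains no $F_2$); you then classify $F_2$-free RACGs and RAAGs combinatorially through induced subgraphs on two or three vertices, and let irreducibility and essentialness collapse the resulting product of $D_\infty$ and $\Z_2$ (resp.\ $\Z$) factors to a single one. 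Your approach buys independence from \cite[Lemma 7.1]{C-S} and \cite[Theorem 8.7.3]{Dav} and does not need to know in advance that the invariant flat is one-dimensional, at the price of the point-set verifications around the restricted action and an appeal to Bieberbach; the paper's approach is shorter given the cited structure results and stays closer to the cube-complex literature. One cosmetic remark: to pass from ``the image in $\isom(\mathbb{E}^n)$ is virtually $\Z^n$'' to ``$G$ is virtually $\Z^n$'' across the finite kernel takes a small extra argument, but since the only consequence you use is the absence of $F_2$ (equivalently, amenability), which passes through finite-by-amenable extensions immediately, this does not affect the proof.
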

\begin{proof}
Let $X_\Gamma=\Sigma_\Gamma$ or $ \tilde{S_\Gamma}$ with a decomposition $X_\Gamma=X_{\Gamma_1}\times\dots\times X_{\Gamma_m}$, where each $X_{\Gamma_i}=\Sigma_{\Gamma_i}$ or $ \tilde{S_{\Gamma_i}}$ is irreducible. Since $X_\Gamma$ is essential, Proposition \ref{prop: essential racg and raag} implies that each factor $X_{\Gamma_i}$ is essential and thus unbounded.

Write $G_i=W_{\Gamma_i}$ or $A_{\Gamma_i}$, respectively for simplicity. Suppose a factor $X_{\Gamma_i}$ is Euclidean. Then there is a $\aut({X_{\Gamma_i}})$-invariant flat in $X_{\Gamma_i}$. Because $X_{\Gamma_i}$ is essential and the $1$-skeleton of $X_{\Gamma_i}$ is exactly the Cayley graph of $G_i$, on which the action of $G_i$ is transitive,  the $G_i$-action on $X_{\Gamma_i}$ is essential and cocompact. Therefore, $\aut(X_{\Gamma_i})$ acts on $X_{\Gamma_i}$ essentially and cocompactly since $G_i\leq \aut(X_{\Gamma_i})$. Then \cite[Lemma 7.1]{C-S} implies that $X_{\Gamma_i}$ is $\R$-like and thus $X_{\Gamma_i}$ is quasi-isometric to the real line $\R$. This implies that $G_i$ is quasi-isometric to $\Z$ by Lemma
\ref{lem: quasi-isometric} and thus $G_i$ is virtually $\Z$ and has exactly two ends.

In the case $G_i=W_{\Gamma_i}$, applying \cite[Theorem 8.7.3]{Dav}, one has that $G_i$ is the product of a finite special subgroup and a speicial subgroup that is an infinite dihedral group $D_\infty$.  Now if the finite special group factor of $G_i$ is nontrivial, then $X_{\Gamma_i}$ is reducible by Proposition \ref{prop: decomposition cub cpx}, which is a contradiction to the fact that $X_{\Gamma_i}$ is irreducible. This implies $G_i=D_\infty$. In the case $G_i=A_{\Gamma_i}$, for any vertices $v, w$ of the graph $\Gamma_i$, the two-generator subgroup $\langle v, w\rangle \leq A_{\Gamma_i}$ has to be either free or abelian by a classical result of Baudisch in \cite{Bau}, which implies that $v, w$ have to commute because $A_{\Gamma_i}$ is virtually $\Z$, which is amenable. This implies that $ A_{\Gamma_i}$ has to be isomorphic to $\Z$.
\end{proof}

These lead to the following result.

\begin{thm}\label{thm: decompostion}
	Let $G_\Gamma\curvearrowright X_\Gamma$ where $G_\Gamma=W_\Gamma$ or $A_\Gamma$ and $X_\Gamma=\Sigma_\Gamma$ or $\tilde{S_\Gamma}$, respectively. Then $G_\Gamma=G_{\Gamma'}\times H^n$ for some subgraph $\Gamma'$ of $\Gamma$ and a group $H$ and an $n\in \N$, in which  $H=D_\infty$ if $G_\Gamma=W_\Gamma$ and $H=\Z$ if $G_\Gamma=A_\Gamma$. In addition, the corresponding complex $X_{\Gamma'}$ of $G_{\Gamma'}$ is strictly non-Euclidean.  
\end{thm}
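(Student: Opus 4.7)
The plan is to start from the join decomposition of $\Gamma$ that was described just before the lemma, and then sort the resulting irreducible factors into Euclidean and non-Euclidean ones, using Lemma~\ref{lem: structure of cpx} to identify the Euclidean ones.

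First I would decompose $\Gamma$ using Proposition~\ref{prop: decomposition cub cpx}. Iterating the join decomposition, since $\Gamma$ is finite, one obtains a partition $V=V_1\sqcup\dots\sqcup V_m$ and subgraphs $\Gamma_i$ on $V_i$ such that $\Gamma=\Gamma_1\star\dots\star\Gamma_m$ and each $X_{\Gamma_i}$ is irreducible. By the discussion between Proposition~\ref{prop: decomposition cub cpx} and Lemma~\ref{lem: structure of cpx}, this gives the coordinatewise product decompositions
\[
X_\Gamma=X_{\Gamma_1}\times\dots\times X_{\Gamma_m}, \qquad G_\Gamma=G_{\Gamma_1}\times\dots\times G_{\Gamma_m}.
\]

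Next, I would split the index set $\{1,\dots,m\}$ as $I_{nE}\sqcup I_E$, where $i\in I_E$ iff the irreducible factor $X_{\Gamma_i}$ is Euclidean. For each $i\in I_E$, Lemma~\ref{lem: structure of cpx} identifies $G_{\Gamma_i}\cong H$, where $H=D_\infty$ in the RACG case and $H=\Z$ in the RAAG case; in particular the Euclidean factors are all mutually isomorphic to this single group $H$. Setting $n:=|I_E|$ and collecting the non-Euclidean indices to form the subgraph $\Gamma':=\star_{i\in I_{nE}}\Gamma_i\subset \Gamma$, one obtains $G_{\Gamma'}=\prod_{i\in I_{nE}}G_{\Gamma_i}$ and the desired factorization $G_\Gamma=G_{\Gamma'}\times H^n$. (If $I_{nE}=\emptyset$ one reads $G_{\Gamma'}$ as the trivial group and $X_{\Gamma'}$ as a point; this degenerate case is vacuous for the strictly non-Euclidean conclusion.)

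Finally I would verify that $X_{\Gamma'}$ is strictly non-Euclidean, i.e.\ unbounded, essential, and with every irreducible factor non-Euclidean. The product decomposition $X_{\Gamma'}=\prod_{i\in I_{nE}}X_{\Gamma_i}$ is precisely the irreducible decomposition of $X_{\Gamma'}$ (uniqueness up to permutation of factors, as cited after Proposition~\ref{prop: decomposition cub cpx}), so by construction all irreducible factors are non-Euclidean, hence unbounded, hence $X_{\Gamma'}$ is unbounded. For essentialness, I would invoke Proposition~\ref{prop: essential racg and raag}: in the RAAG case $\tilde{S_{\Gamma'}}$ is automatically essential, while in the RACG case essentialness of $\Sigma_\Gamma$ (which is the implicit standing hypothesis under which Lemma~\ref{lem: structure of cpx} was applied) forces $\Gamma^c$ to have no isolated vertex, and the same then holds for $(\Gamma')^c$ since the Euclidean irreducible factors contribute exactly the vertices whose stars equal $V\setminus\{v\}$ (generating the $D_\infty$ or $\Z_2$ Euclidean summands), and removing them does not create new isolated vertices in the complement.

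The main obstacle I expect is the last bookkeeping step: matching up ``Euclidean irreducible factor of $X_\Gamma$'' with ``vertex of $\Gamma$ that is joined to everything else'' (in the RACG case) so as to see that $\Gamma'$ really is a subgraph of $\Gamma$ and that its complement has no isolated vertex. Once one traces through the correspondence between irreducible factors of $X_\Gamma$, the join components of $\Gamma$, and the direct factors of $G_\Gamma$, this becomes a routine verification; the substantive content is supplied by Lemma~\ref{lem: structure of cpx} and Proposition~\ref{prop: decomposition cub cpx}.
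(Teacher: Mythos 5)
Your proposal is correct and follows essentially the same route as the paper: decompose $X_\Gamma$ and $G_\Gamma$ into irreducible factors via iterated joins, identify the Euclidean factors as $D_\infty$ or $\Z$ using Lemma~\ref{lem: structure of cpx}, and let $\Gamma'$ be the join of the remaining pieces. Your final essentialness check is slightly more elaborate than needed (the paper simply notes, in the proof of Lemma~\ref{lem: structure of cpx}, that essentialness of $X_\Gamma$ passes to each irreducible factor via Proposition~\ref{prop: essential racg and raag}, so $X_{\Gamma'}$ is essential as a product of essential factors), but the argument is the same in substance.
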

\begin{proof}
For the action $G_\Gamma\curvearrowright X_\Gamma$, by the reduction, it can be written as \[G_{\Gamma_1}\times\dots \times G_{\Gamma_m}\curvearrowright X_{\Gamma_1}\times\dots \times X_{\Gamma_m},\]
in which each $X_{\Gamma_i}$ is irreducible. Collecting all Euclidean factors $X_{\Gamma_i}$ together.  Without loss of generality, one may assume they are exactly the final $n$ factors. Then Lemma \ref{lem: structure of cpx} implies that for $m-n+1\leq i\leq m$, the group $H=G_{\Gamma_i}\simeq D_\infty$ or $\Z$ depending on which case  is under consideration.  Now $\Gamma'$ is defined to be the join of all  graphs $\Gamma_1, \dots, \Gamma_{m-n}$ such that $X_{\Gamma'}=X_{\Gamma_1}\times\dots \times X_{\Gamma_{m-n}}$, which is strictly non-Euclidean by definition.
\end{proof}

On the other hand, ultrafilters and the Nevo-Sageev boundary work compatible with the product of $\cat$ cube complexes. See \cite{N-S}. Given a decomposition $X\simeq\prod_{i=1}^mX_i$, one actually has $\CU(X)\simeq \prod_{i=1}^m\CU(X_i)$ and $B(X)\simeq \prod_{i=1}^mB(X_i)$.   In our case,  since the action $G_\Gamma\curvearrowright X_\Gamma$ action can be decomposed to be \[G_{\Gamma_1}\times\dots \times G_{\Gamma_m}\curvearrowright X_{\Gamma_1}\times\dots \times X_{\Gamma_m}\]
Then the action $G_\Gamma\curvearrowright \CU(X_\Gamma)$ is exactly the product action \[G_\Gamma=\prod_{i=1}^mG_{\Gamma_i}\curvearrowright \prod_{i=1}^m\CU(X_{\Gamma_i})\] and  therefore  the action on the Nevo-Sageev boundary $G_\Gamma\curvearrowright B(X_\Gamma)$ can be written as  \[G_\Gamma=\prod_{i=1}^mG_{\Gamma_i}\curvearrowright \prod_{i=1}^mB(X_{\Gamma_i}).\]  
Now we study the dynamics of $G_\Gamma$ on the Nevo-Sageev boundary $B(X_\Gamma)$. In the irreducible Euclidean case, let $G_\Gamma=D_\infty$ or $\Z$. Then by a simple observation, the corresponding Roller boundary and the Nevo-Sageev boundary $\CR(X_\Gamma)=B(X_\Gamma)$ is a set consisting exactly two points, which can be identified by the only two infinite geodesics in this case. However, the action on them are different. In the RACG case,  $W_\Gamma=D_\infty$ when the defining graph $\Gamma=(V=\{u, v\}, E=\emptyset)$.  If we denote by $\breve{0}$ the infinite geodesic  $uvuv\dots$ and $\breve{1}$ the  geodesic $vuvu\dots$ for simplicity, then $\CR(\Sigma_\Gamma)=B(\Sigma_\Gamma)$ can be identified by $\{\breve{0}, \breve{1}\}$ and therefore the action of $W_\Gamma$ on the boundary is generated by the permutations $u\cdot \breve{0}=\breve{1}$ and $u\cdot \breve{1}=\breve{0}$ as well as $v\cdot \breve{1}=\breve{0}$ and $v\cdot \breve{0}=\breve{1}$. In the RAAG case that $A_\Gamma=\Z$, it is easy to see the the boundary $\CR(\tilde{S_\Gamma})=B(\tilde{S_\Gamma})$ are exactly the two ends of it, on which the action of $A_\Gamma=\Z$ is trivial.  Now, we write the boundary $B(X_\Gamma)=\{\breve{0}, \breve{1}\}$ for simplicity in both cases. 

In the strictly non-Euclidean case, the following  theorem was established in \cite{N-S}. See \cite[Theorem 5.1]{N-S} and the proof of \cite[Theorem 5.8]{N-S}.

\begin{thm}\cite[Theorem 5.1, Theorem 5.8]{N-S}\label{thm: contratible set and boundary}
Let $X$ be an essential strictly non-Euclidean $\cat$ cube complex admitting a proper cocompact action of $G\leq \aut(X)$. Then $B(X)$ is a $G$-boundary and there is a contractible open set for the action in $B(X)$.
\end{thm}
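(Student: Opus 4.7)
The plan is to use the skewering dynamics of hyperbolic isometries on the cube complex, as provided by the Caprace--Sageev theory, to establish minimality, strong proximality and contractibility simultaneously. The technical engine is the Double Skewering Lemma: under the hypotheses (essential, strictly non-Euclidean, with a proper cocompact $G$-action), every halfspace $h\in\CH$ admits a hyperbolic element $g\in G$ that skewers it, i.e.\ $gh\subsetneq h$. Iterating yields a nested sequence $\cdots\subsetneq g^{n+1}h\subsetneq g^n h\subsetneq h$ whose intersection, together with the non-termination requirement, determines a single attracting ultrafilter $\xi_g^+\in B(X)$; by replacing $g$ with $g^{-1}$ we likewise obtain a repelling point $\xi_g^-$ associated to $h^\ast$.

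To prove minimality I would argue as follows. A basic open neighborhood of an ultrafilter $\alpha\in B(X)$ is determined by a finite collection $\{h_1,\dots,h_k\}\subset\alpha$. For any $\beta\in B(X)$, I want to find $s\in G$ with $s\beta$ in this neighborhood. Using the strictly non-Euclidean hypothesis, one can find skewering elements $g_1,\dots,g_k$ whose attracting halfspaces can be arranged to lie inside $h_1,\dots,h_k$ respectively; since $\beta$ is non-terminating we can apply these elements successively to force $\beta$ into each $h_i$ one at a time. The non-Euclidean assumption is what rules out being trapped in an invariant flat where such compatible skewerings fail to exist. For strong proximality, given $\nu\in P(B(X))$ I would pick a skewering element $g$ whose repelling halfspace $h^\ast$ is $\nu$-null (possible because only countably many halfspaces can carry positive $\nu$-mass); then $g^n\nu$ concentrates on $\xi_g^+$ as $n\to\infty$ because $g^n(B(X)\setminus\widehat{h^\ast})$ shrinks into every neighborhood of $\xi_g^+$, yielding the Dirac mass $\delta_{\xi_g^+}$ as a weak-$\ast$ limit.

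For the contractible open set I would take $V=\widehat{h}:=\{\alpha\in B(X):h\in\alpha\}$ for any halfspace $h$ and pick a skewering element $g$ with $gh\subsetneq h$ and attracting point $\xi_g^+$. Given any neighborhood $U$ of $\xi_g^+$ in $B(X)$, since the sets $\{\alpha:g^n h\in\alpha\}$ form a neighborhood base at $\xi_g^+$, we have $\{\alpha:g^n h\in\alpha\}\subset U$ for $n$ large. But $g^n$ carries $\widehat{h}$ into $\{\alpha:g^n h\in\alpha\}$, so $g^nV\subset U$, which exactly gives contractibility with witness $\xi_g^+$.

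The main obstacle is producing enough skewering elements with compatible axes to arrange the minimality argument; this is where the strictly non-Euclidean assumption is essential, ruling out the degenerate cases such as $D_\infty$ and $\Z$ where $B(X)$ collapses to two points. The standard remedy, which I would import, is to exhibit facing triples of hyperplanes---three pairwise disjoint hyperplanes with none separating the other two---via Caprace--Sageev flipping; such triples let one freely choose the skewering direction among a rich family of halfspaces, which is exactly what is needed to thread $\beta$ through all $k$ halfspace constraints in the neighborhood of $\alpha$. The passage from minimality plus strong proximality to ``$B(X)$ is a $G$-boundary'' is then immediate from Definition~\ref{defn: boundary}.
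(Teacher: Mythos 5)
First, note that the paper does not actually prove this statement: it is imported from Nevo--Sageev (\cite[Theorem 5.1]{N-S} together with the proof of \cite[Theorem 5.8]{N-S}), so the comparison is between your sketch and their argument. Your toolbox is the right one (Caprace--Sageev double skewering and flipping, facing triples), the reduction of basic neighborhoods in $B(X)$ to finite families of halfspaces is correct, and the identity $g\cdot\{\alpha: h\in\alpha\}=\{\alpha: gh\in\alpha\}$ combined with double skewering is indeed how one pushes a cylinder set deep inside a prescribed halfspace.

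There are, however, genuine gaps. The central one is that you repeatedly assume that a skewering element $g$ with $gh\subsetneq h$ has north--south dynamics on $B(X)$: that $\bigcap_n\{\alpha: g^nh\in\alpha\}$ is a single point $\xi_g^+$, that these sets form a neighborhood basis at $\xi_g^+$, and that $g^n$ contracts the complement of the repelling cylinder onto $\xi_g^+$. This fails already when $X=X_1\times X_2$ is a nontrivial product of non-Euclidean factors, which is allowed under the hypothesis ``strictly non-Euclidean'': every halfspace of $X$ has the form $h_1\times X_2$ or $X_1\times h_2$, so $\{\alpha: g^nh\in\alpha\}$ contains a full copy of $B(X_2)$ for all $n$ and never shrinks to a point. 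For the same reason $V=\{\alpha: h\in\alpha\}$ is \emph{not} a contractible open set in the reducible case: no translate of a single-halfspace cylinder can be contained in a ``corner'' neighborhood $\{\alpha: k_1\in\alpha\}\cap\{\alpha: k_2\in\alpha\}$ with $k_1,k_2$ pulled back from different factors. The correct contractible set is an intersection of halfspace cylinders, one per irreducible factor, and one must additionally produce a point of $B(X)$ whose neighborhood basis is given by a descending chain of such corners. Second, your strong proximality step hinges on finding a repelling halfspace $h^{*}$ with $\nu(\{\alpha: h^{*}\in\alpha\})=0$; this is impossible whenever $\nu$ has full support, since these cylinders are nonempty open subsets of $B(X)$. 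Nevo--Sageev instead push the $\nu$-mass above $1-\eps$ into one halfspace at a time, using facing triples to guarantee that the element handling the next halfspace does not undo the progress already made --- which is also exactly the justification missing from your ``thread $\beta$ through all $k$ constraints one at a time'' step in the minimality argument.
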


The following theorem was proved in \cite{B-C-G-N-W}. See also \cite[Theorem 7.4]{N-S}

\begin{thm}\cite[Theorem 4.2]{B-C-G-N-W}\label{thm: amenable stabilizer on B(X)}
	Let countable discrete group $G$ acts properly on a finite-dimensional $\cat$ cube complex $X$. Then the stabilizer group $\stab_G(x)$ is amenable for any  $x\in \CR(X)$. In particular, $\stab_G(x)$ is amenable for any  $x\in B(X)$.
\end{thm}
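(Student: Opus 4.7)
The plan is to convert the problem into combinatorics of ultrafilters and then to exploit finite-dimensionality through an invariant horofunction. Writing $x \in \CR(X)$ as an ultrafilter $\alpha \subset \CH$, one has $\stab_G(x) = \{g \in G : g\alpha = \alpha\}$. First I would fix a basepoint $v_0 \in X^0$ and define a combinatorial horofunction
\[
b_\alpha(v) = \#\{\hat h : h \in \alpha,\ v_0 \in h,\ v \notin h\} - \#\{\hat h : h \in \alpha,\ v \in h,\ v_0 \notin h\},
\]
whose two summands are finite because they are bounded by the combinatorial distance from $v_0$ to $v$. Any $g \in \stab_G(x)$ permutes the halfspaces in $\alpha$ (and those outside $\alpha$), so it preserves $b_\alpha$ up to an additive constant, producing a homomorphism $\tau : \stab_G(x) \to \Z$ measuring ``translation length along $\alpha$''.

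Next I would analyze the kernel $N := \ker \tau$, which preserves each level set $H_k := b_\alpha^{-1}(k)$. Standard horosphere analysis in CAT(0) cube complexes equips $H_k$ with the induced median structure inherited from $X$ and identifies its hyperplanes with those of $X$ that are ``tangent'' to $\alpha$; this realizes $H_k$ as a median object of rank strictly smaller than $\dim X$ on which $N$ still acts properly. The ultrafilter $\alpha$ determines an $N$-fixed ultrafilter on this smaller object, so one iterates: at each step another homomorphism into $\Z$ is extracted and one descends to a further invariant median substructure of smaller rank. Because $\dim X < \infty$ the process terminates in at most $\dim X$ steps, leaving a residual group that is locally finite by properness of $G \curvearrowright X$. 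Amenability of $\stab_G(x)$ then follows because it is an iterated extension of copies of $\Z$ by a locally finite group. The assertion for $B(X)$ is immediate from the inclusion $B(X) \subset \CR(X)$.

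The main obstacle is making the induction rigorous: horospheres in a CAT(0) cube complex are not themselves cube complexes in an obvious way, and one must verify that enough median-theoretic structure survives at each stage, together with compatible hyperplane bookkeeping, for the next round of the argument to apply. A more streamlined alternative, and presumably the route taken in \cite{B-C-G-N-W}, is to recognize $\stab_G(x)$ directly as acting on a median space of rank at most $\dim X$ and to invoke structural results in the spirit of Caprace-Sageev and Chatterji-Fern\'{o}s-Iozzi forcing such automorphism groups (modulo finite kernels coming from properness) to be virtually solvable, hence amenable. In practice I would simply cite \cite[Theorem 4.2]{B-C-G-N-W}, but the horofunction-and-induction strategy above sketches a self-contained route.
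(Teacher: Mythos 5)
The paper offers no proof of this statement: it is imported verbatim as \cite[Theorem 4.2]{B-C-G-N-W}, so your proposal can only be measured against the argument in that reference. Your first step is sound: the signed hyperplane count $b_\alpha$ is the standard combinatorial Busemann cocycle, it is finite, it satisfies $b_\alpha(gv)=b_\alpha(v)+b_\alpha(gv_0)$ for $g$ fixing $\alpha$, and $g\mapsto b_\alpha(gv_0)$ is indeed a homomorphism $\stab_G(x)\to\Z$. The proof breaks down at the inductive step. Level sets of $b_\alpha$ are not median subalgebras of $X^0$: already in the $4$-regular tree with $\omega$ an end, take a vertex $p$ and three of its neighbours $a,b,c$ lying away from $\omega$; then $b_\omega(a)=b_\omega(b)=b_\omega(c)=b_\omega(p)+1$, but the median of $a,b,c$ is $p$, which lies on a different level set. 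So the object you propose to induct on does not carry the structure you assign to it, and no argument is offered for the crucial quantitative claim that its ``rank'' is strictly smaller than $\dim X$; note that in the tree example the horosphere is an infinite discrete set rather than a bounded (rank-$0$) object, so even the base case of the proposed induction fails as stated. The fallback is not a proof either: $G$ itself acts properly on a median space of rank $\dim X$ (namely $X$) and need not be virtually solvable, so any structural result of the kind you invoke must use the fixed boundary point in an essential way, and ``in the spirit of'' Caprace--Sageev does not identify such a result.

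For the record, the argument in \cite{B-C-G-N-W} is of a completely different nature and involves no induction on dimension. It reuses the Property A machinery of that paper: for a vertex $x$ and a boundary point $\omega$ one has finitely supported probability measures $f_n(x,\omega)$ on $X^0$, concentrated on the part of the interval from $x$ towards $\omega$ within distance $n$, which are $\aut(X)$-equivariant and satisfy $\lVert f_n(x,\omega)-f_n(y,\omega)\rVert_1\to 0$ for fixed $x,y$, with control depending only on $d(x,y)$ and $\dim X$. For $g\in\stab_G(\omega)$ this yields $\lVert g\cdot f_n(x,\omega)-f_n(x,\omega)\rVert_1=\lVert f_n(gx,\omega)-f_n(x,\omega)\rVert_1\to 0$, i.e.\ a Reiter-type sequence on a $\stab_G(\omega)$-set whose point stabilizers are finite by properness, and amenability follows. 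Your Busemann homomorphism is a reasonable observation, but it is not the engine of the actual proof, and the induction you hang on it has a genuine gap.
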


Recall a classical result that any infinite irreducible  RACG $W_\Gamma$ ($\Gamma$ is finite) is $C^*$-simple whenever $|V(\Gamma)|\geq 3$ (see e.g., \cite{Harpe}). We then establish the following result for irreducible RAAGs, which might be known to experts.

\begin{lem}\label{lem: C simple raag}
Let $A_\Gamma\neq \Z$ be an irreducible  RAAG in which the defining graph $\Gamma=(V, E)$ is finite. Then $A_\Gamma$ is $C^*$-simple.
\end{lem}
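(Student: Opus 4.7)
The plan is to deduce $C^*$-simplicity from acylindrical hyperbolicity together with torsion-freeness. Recall that by a theorem of Dahmani--Guirardel--Osin (building on Powers-type arguments), any acylindrically hyperbolic group whose finite radical is trivial is $C^*$-simple; the same conclusion is also available through Breuillard--Kalantar--Kennedy--Ozawa once a boundary action with amenable stabilizers is produced and shown to be topologically free.

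The first step is to verify that $A_\Gamma$ is acylindrically hyperbolic under the stated hypotheses. This is precisely the content of a theorem of Minasyan--Osin: a right-angled Artin group is acylindrically hyperbolic if and only if it is non-cyclic and its defining graph is not a join, which matches our assumptions that $\Gamma$ is irreducible with $A_\Gamma \neq \mathbb Z$. An alternative geometric route is to construct a rank-one isometry of $\tilde{S_\Gamma}$; by Proposition \ref{prop: decomposition cub cpx} the irreducibility of $\Gamma$ passes to $\tilde{S_\Gamma}$, and essentialness is guaranteed by Proposition \ref{prop: essential racg and raag}, so the Caprace--Sageev rank-rigidity theorem yields a rank-one axis, and acylindrical hyperbolicity then follows by the Bestvina--Bromberg--Fujiwara projection-complex machinery.

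The second step is immediate: $A_\Gamma$ is torsion-free (classical, via the linearity of RAAGs, or directly because every non-trivial element translates along a combinatorial geodesic in $\tilde{S_\Gamma}^1$). In particular its finite radical is trivial, so the $C^*$-simplicity criterion invoked above applies and $A_\Gamma$ is $C^*$-simple.

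The main obstacle is essentially one of citation: the heavy theorems are those supplying acylindrical hyperbolicity and the general $C^*$-simplicity criterion, and the plan consists in assembling them. If one insists on a self-contained argument, the delicate step would be producing two independent rank-one axes for the $A_\Gamma$-action on $\tilde{S_\Gamma}$ and running a classical Powers-type argument through their attracting fixed points on the Roller boundary; verifying that such axes can be chosen with pairwise disjoint endpoint pairs using only the irreducibility hypothesis is the main technical input that the cited literature handles for us.
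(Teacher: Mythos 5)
Your proof is correct, but it takes a genuinely different route from the paper. The paper argues by commensurability with Coxeter groups: following Davis--Januszkiewicz, it embeds $A_\Gamma$ as a finite-index subgroup of a RACG $W_{\Gamma'}$ on the doubled vertex set, checks by a short combinatorial argument that $\Gamma'$ is again join-free with $|V'|\geq 4$, invokes the classical fact that infinite irreducible RACGs on at least three generators are $C^*$-simple, and transfers $C^*$-simplicity to the finite-index subgroup via de la Harpe. You instead go through acylindrical hyperbolicity: Minasyan--Osin (or, equivalently, Caprace--Sageev rank rigidity applied to the essential irreducible complex $\tilde{S_\Gamma}$ plus the Sisto/Bestvina--Bromberg--Fujiwara passage from a contracting axis to acylindrical hyperbolicity) gives that a non-cyclic, join-free RAAG is acylindrically hyperbolic, and since RAAGs are torsion-free the finite radical is trivial, so the Dahmani--Guirardel--Osin theorem (property $P_{\mathrm{naive}}$ for acylindrically hyperbolic groups with trivial finite radical) yields $C^*$-simplicity. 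Both arguments are sound; the paper's stays within elementary graph combinatorics and references it already uses, while yours is shorter on the page but imports heavier external machinery. One caution on your secondary remark: in this paper the BKKO criterion is used in the \emph{other} direction (topological freeness of the boundary action is deduced \emph{from} $C^*$-simplicity plus amenable stabilizers), so deriving $C^*$-simplicity from BKKO would require an independent verification of topological freeness, as you acknowledge at the end; your primary route through DGO does not have this issue.
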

\begin{proof}
We follow the embedding arguments in \cite{D-J} to show that such a $A_\Gamma$ can be embedded into an irreducible non-amenable RACG. Indeed, since $A_\Gamma$ is irreducible, the defining graph $\Gamma$ has no joins. Define a new graph $\Gamma'=(V', E')$ in the way that the vertex set $V'=V\times \{0, 1\}$ and 
\begin{enumerate}
	\item $((v, 1), (w, 1))\in E'$ if and only if $(v, w)\in E$;
	\item $((v, 0), (w, 0))\in E'$ for any $v, w\in V$; and 
	\item $((v, 0), (w, 1))\in E'$ if and only if $v\neq w$.
\end{enumerate}
It was proved in \cite{D-J} that $A_\Gamma$ can be embedded in $W_{\Gamma'}$ as a subgroup with a finite index. We claim that $W_{\Gamma'}$ is irreducible by showing that $\Gamma'$ has no joins. Suppose the contrary that $\Gamma'=\Gamma'_1\star\Gamma'_2$ for two non-trivial subgraph $\Gamma'_1=(V'_1, E'_1)$ and $\Gamma'_2=(V'_2, E'_2)$. Then for any pair of vertices $\{v\}\times \{0, 1\}$, one has either $\{v\}\times \{0, 1\}\subset V'_1$ or $\{v\}\times \{0, 1\}\subset V'_2$ because there is no edge in $E'$ between $(v, 0)$ and $(v, 1)$ in $\Gamma'$. This implies that $V_i=\{v\in V: \{v\}\times \{0, 1\}\subset V'_i\}$ for $i=1,2$ form a non-trivial partition of $V$. Now for any $v\in V_1$ and $w\in V_2$, since $\Gamma'_1$ and $\Gamma'_2$ form a join, one has $((v, 1), (w, 1))\in E'$, which implies that $(v, w)\in E$. Therefore $\Gamma$ itself has a join, which is a contradiction.
Now since $A_\Gamma\neq \Z$, then $|V|\geq 2$ and thus $|V'|\geq 4$ by our construction. Now because $W_{\Gamma'}$ is also irreducible, one has $W_{\Gamma'}$ is $C^*$-simple, whence $A_\Gamma$ is also $C^*$-simple because $A_\Gamma$ is a subgroup of $W_{\Gamma'}$ with a finite index by \cite[Proposition 19]{Harpe}.
\end{proof}

\begin{rmk}\label{rmk: C simple and non-Euclidean}
Let $\Gamma=(V, E)$ be a finite simple graph. In the infinite irreducible RACG case, if $|V|=2$ then $W_\Gamma=D_\infty$ and then $\Sigma_\Gamma$ is Euclidean. In the irreducible RAAG case, if $|V|=1$ then $A_\Gamma=\Z$ and the complex $\tilde{S_\Gamma}$ is Euclidean. Now, we still write $G_\Gamma\curvearrowright X_\Gamma$  for simplicity as usual, where $G_\Gamma=W_\Gamma$ or $A_\Gamma$ and $X_\Gamma=\Sigma_\Gamma$ or $\tilde{S_\Gamma}$, respectively. Therefore, If $X_\Gamma$ is irreducible and non-Euclidean then $G_\Gamma$ is $C^*$-simple.
\end{rmk}

Now we are ready to prove the following main theorem in this section. We remark that in the following theorem it is necessary to assume there is one non-Euclidean factor $X_{\Gamma_i}$ in the canonical decomposition $X_\Gamma=X_{\Gamma_1}\times\dots\times X_{\Gamma_m}$ discussed above because if not, as we have shown above, the boundary $B(X_\Gamma)$ will be finite and the group $G_\Gamma$ is amenable, in which case the action $\beta$ below cannot be purely infinite.

\begin{thm}\label{thm: pure inf on B(X)}
	Let $G_\Gamma\curvearrowright X_\Gamma$ where $X_\Gamma$ is essential and has at least one non-Euclidean irreducible factor $X_{\Gamma_i}$ in the canonical decomposition above. Then the induced action $\beta: G_\Gamma\curvearrowright B(X_\Gamma)$ is purely infinite.  In addition, in the RAAG case, the action $\beta$ has finitely many $G_\Gamma$-invariant closed sets. In the RACG case, $\beta$ is minimal. However, if $G_\Gamma$ has special subgroups $D_\infty$ or $\Z$, depending on the RACG case or the RAAG case, then the action is not topologically free.
\end{thm}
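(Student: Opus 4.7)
The plan is to reduce the entire theorem to two independent pieces by splitting off the Euclidean factors. Applying Theorem~\ref{thm: decompostion}, I would first write $G_\Gamma = G_{\Gamma'}\times H^n$ with $H=D_\infty$ in the RACG case and $H=\Z$ in the RAAG case, and with the corresponding complex $X_{\Gamma'}$ strictly non-Euclidean. Because the Nevo-Sageev boundary respects products (as recalled above Theorem~\ref{thm: pure inf on B(X)}), one obtains $B(X_\Gamma)=B(X_{\Gamma'})\times \{\breve{0},\breve{1}\}^n$ and the product decomposition $\beta=\beta'\times \beta''$, where $\beta':G_{\Gamma'}\curvearrowright B(X_{\Gamma'})$ and $\beta'':H^n\curvearrowright \{\breve{0},\breve{1}\}^n$.

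Next I would deal with the non-Euclidean factor $\beta'$. Since $X_{\Gamma'}$ is essential and strictly non-Euclidean and $G_{\Gamma'}$ acts properly cocompactly, Theorem~\ref{thm: contratible set and boundary} guarantees that $B(X_{\Gamma'})$ is a $G_{\Gamma'}$-boundary (hence minimal and strongly proximal) and that $\beta'$ admits a contractible open set. Strong proximality on a finite space with more than one point is impossible (a uniform measure would be fixed), so $B(X_{\Gamma'})$ is infinite whenever it has at least two points, which holds by the non-Euclidean hypothesis. Proposition~\ref{prop: contractibility implying p.i.} then yields that $\beta'$ is purely infinite. Since $\{\breve{0},\breve{1}\}^n$ is finite, Proposition~\ref{prop: p.i. in product with trivial action} immediately upgrades this to pure infiniteness of the full product action $\beta$.

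For the remaining assertions I would use the explicit description of $\beta''$ recalled in the paragraph preceding Theorem~\ref{thm: contratible set and boundary}. In the RACG case, each $D_\infty$ factor acts on $\{\breve{0},\breve{1}\}$ by alternation, hence $D_\infty^n$ acts transitively on $\{\breve{0},\breve{1}\}^n$; combined with minimality of $\beta'$ a coordinatewise argument yields minimality of $\beta$. In the RAAG case, each $\Z$ factor acts trivially, so any $\beta$-invariant closed set $Y$ has each fiber $Y\cap (B(X_{\Gamma'})\times\{y\})$ closed and $G_{\Gamma'}$-invariant, thus either empty or the whole fiber by minimality of $\beta'$; therefore invariant closed sets are in bijection with subsets of $\{\breve{0},\breve{1}\}^n$ and there are only finitely many of them. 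Finally, if $n\geq 1$ then in the RACG case the element $uv$ of any $D_\infty$ factor fixes both points of $\{\breve{0},\breve{1}\}$, and in the RAAG case every non-identity element of a $\Z$ factor fixes every point of $\{\breve{0},\breve{1}\}$, so Proposition~\ref{prop: p.i. in product with trivial action} applied directly produces open fixed sets and rules out topological freeness. The main obstacle I anticipate is the careful bookkeeping when identifying $\beta''$ explicitly on the Euclidean factors, in particular showing that the two-point Nevo-Sageev boundary of $D_\infty$ carries the alternating action while that of $\Z$ is trivial; once this is in hand the rest of the proof is a clean application of the results already assembled.
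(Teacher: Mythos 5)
Your proposal is correct and follows essentially the same route as the paper's proof: decompose $G_\Gamma=G_{\Gamma'}\times H^n$ via Theorem \ref{thm: decompostion}, get pure infiniteness of the non-Euclidean factor from Theorem \ref{thm: contratible set and boundary} together with Proposition \ref{prop: contractibility implying p.i.}, pass to the product via Proposition \ref{prop: p.i. in product with trivial action}, and read off minimality, the count of invariant closed sets, and the failure of topological freeness from the explicit two-point actions on the Euclidean factors. Your explicit remark that $B(X_{\Gamma'})$ is infinite (needed for Proposition \ref{prop: contractibility implying p.i.}) and your identification of invariant closed sets with subsets of $\{\breve{0},\breve{1}\}^n$ are slightly more careful than the paper's write-up, but the argument is the same.
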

\begin{proof}
	Theorem \ref{thm: decompostion} implies that $G_\Gamma=G_{\Gamma'}\times H^n$ where $H=D_\infty$ or $\Z$ and the complex $X_{\Gamma'}$ is strictly non-Euclidean. Observe that $\beta$ is exactly the product action of $\beta_1: G_{\Gamma'}\curvearrowright B(X_{\Gamma'})$ and the action $\beta_2: H^n\curvearrowright \{\breve{0}, \breve{1}\}^n$. Theorem \ref{thm: contratible set and boundary} implies that there is a contractible set $V$ in $B(X_{\Gamma'})$ for $\beta_1$ and also $\beta_1$ is minimal. Then Proposition \ref{prop: contractibility implying p.i.} entails that $\beta_1$ is purely infinite. In addition, because $\beta_2$ is an action on a finite set, Proposition \ref{prop: p.i. in product with trivial action} implies that $\beta=\beta_1\times \beta_2$ is purely infinite. 
	
	Now observe that $G_{\Gamma'}$ is a finite direct product of $C^*$-simple groups by Remark \ref{rmk: C simple and non-Euclidean} and thus $G_{\Gamma'}$ itself is $C^*$-simple by \cite[Proposition 19]{Harpe}. Now since $\beta_1$ is a $G_{\Gamma'}$-boundary action by Theorem \ref{thm: contratible set and boundary} and the stabilizer group $\stab_G(x)$ of each $x\in B(X)$ is amenable by Theorem \ref{thm: amenable stabilizer on B(X)}, one has $\beta_1$ is topologically free by \cite[Proposition 1.9]{B-K-K-O}.
	
	Now if $n>0$, In the RAAG case, note that $H=\Z$ and the action $\beta_2$ is the trivial action.   Thus $\beta$ has in total $2^n$ $G_{\Gamma}$-invariant closed sets.  In the RACG case, note that the corresponding action $\beta_2$ of $H=D_\infty$ on $\{\breve{0}, \breve{1}\}^n$ is minimal. Then it is easy to verify that $\beta$ is minimal.
	
	However, because there is an non-trivial element in $H$ fixing a point in $\{\breve{0}, \breve{1}\}^n$, it follows from Proposition \ref{prop: p.i. in product with trivial action} that $\beta$ is not topologically free. In particular, $\beta$ is not essentially free.
\end{proof}

Applying Theorem \ref{thm: pure inf main}, one immediately has the following theorem.

\begin{thm}\label{thm: main two}
Let $G_\Gamma\curvearrowright X_\Gamma$ where $X_\Gamma$ is essential and has at least one non-Euclidean irreducible factor $X_{\Gamma_i}$ in the decomposition above.  Suppose 
\begin{enumerate}
	\item  $G_\Gamma=W_\Gamma$ has no special subgroup $D_\infty$ in the  RACG case;
	\item  $G_\Gamma=A_\Gamma$ has no special subgroup $\Z$  in the RAAG case.
\end{enumerate}
Then the reduced crossed product $A=C(B(X_\Gamma))\rtimes_r G_\Gamma$ of the induced action on the Nevo-Sageev boundary $\beta: G_\Gamma\curvearrowright B(X_\Gamma)$ is unital simple separable and purely infinite. In addition, in the RACG case, $A$ is nuclear as well and thus a Kirchberg algebra satisfying the UCT.
\end{thm}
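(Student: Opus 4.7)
The plan is to reduce to the strictly non-Euclidean case handled in Theorem~\ref{thm: pure inf on B(X)} and then invoke the crossed product criterion of Theorem~\ref{thm: pure inf main}. First I would apply Theorem~\ref{thm: decompostion} to write $G_\Gamma = G_{\Gamma'} \times H^n$ with $H = D_\infty$ (RACG case) or $H = \Z$ (RAAG case). The standing hypothesis that $G_\Gamma$ contains no special subgroup of the Euclidean type means precisely that $n = 0$, so $G_\Gamma = G_{\Gamma'}$ and the irreducible decomposition $X_\Gamma = X_{\Gamma_1} \times \cdots \times X_{\Gamma_m}$ consists entirely of non-Euclidean factors. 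The additional assumption that at least one $X_{\Gamma_i}$ is non-Euclidean is then automatic, and combined with $n = 0$ it guarantees $X_\Gamma$ is strictly non-Euclidean, as required by the structural input of the section.

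Next I would verify the three dynamical hypotheses needed to apply Theorem~\ref{thm: pure inf main}(1). Pure infiniteness and minimality of $\beta \colon G_\Gamma \curvearrowright B(X_\Gamma)$ follow from Theorem~\ref{thm: contratible set and boundary} together with Proposition~\ref{prop: contractibility implying p.i.}: the first gives that $B(X_\Gamma)$ is a $G_\Gamma$-boundary (hence minimal and strongly proximal) and contains a contractible open set, while the second converts contractibility plus minimality into pure infiniteness. For topological freeness, I would assemble three ingredients: each irreducible non-Euclidean factor $G_{\Gamma_i}$ is $C^*$-simple by Remark~\ref{rmk: C simple and non-Euclidean}, so the finite direct product $G_\Gamma$ itself is $C^*$-simple by \cite[Proposition~19]{Harpe}; every point stabilizer in $B(X_\Gamma)$ is amenable by Theorem~\ref{thm: amenable stabilizer on B(X)}; and the $C^*$-simplicity of $G_\Gamma$ together with the existence of amenable stabilizers in a minimal action forces topological freeness via \cite[Proposition~1.9]{B-K-K-O}.

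With the three hypotheses in hand, Theorem~\ref{thm: pure inf main}(1) immediately yields that $A = C(B(X_\Gamma)) \rtimes_r G_\Gamma$ is unital, simple, separable, and (strongly) purely infinite; unitality comes from compactness of $B(X_\Gamma)$, and separability from $G_\Gamma$ being countable and $B(X_\Gamma)$ metrizable (since $X_\Gamma$ is locally finite). For the additional nuclearity claim in the RACG case, I would invoke the amenability of the $W_\Gamma$-action on the Roller boundary $\CR(X_\Gamma)$ established by L\'ecureux in \cite{Lec}: since $B(X_\Gamma)$ is a closed $W_\Gamma$-invariant subset of $\CR(X_\Gamma)$, the restricted action on $B(X_\Gamma)$ is also topologically amenable, which upgrades $A$ to a nuclear $C^*$-algebra, and Tu's theorem then supplies the UCT. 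Combined with the earlier conclusions, $A$ is a unital Kirchberg algebra in the UCT class.

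The main obstacle, as flagged in Remark~\ref{rmk: aukward}, is extracting topological freeness without assuming amenability of the action itself; the whole strategy is structured around circumventing this via \cite{B-K-K-O}, which trades amenability of the action for the combination of $C^*$-simplicity of $G_\Gamma$ and amenable stabilizers on the boundary. Consequently, the real technical content hides in verifying that $G_\Gamma = G_{\Gamma_1} \times \cdots \times G_{\Gamma_m}$ is $C^*$-simple once each irreducible non-Euclidean factor is, a point that for the RAAG case rests on the embedding argument in Lemma~\ref{lem: C simple raag}.
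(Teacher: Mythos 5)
Your proposal is correct and follows essentially the same route as the paper: reduce via Theorem \ref{thm: decompostion} to the strictly non-Euclidean case, establish minimality, pure infiniteness and topological freeness exactly as in the proof of Theorem \ref{thm: pure inf on B(X)} (contractible sets plus \cite[Proposition 1.9]{B-K-K-O} with amenable stabilizers), and conclude with Theorem \ref{thm: pure inf main}. The only cosmetic difference is in the nuclearity step, where you cite L\'ecureux's amenability on $\CR(X_\Gamma)$ directly and restrict to the closed invariant subset $B(X_\Gamma)$, while the paper spells out the chain of identifications ($B=\CR=$ horofunction $=$ combinatorial boundary) needed to transfer L\'ecureux's result; both rest on the same underlying fact.
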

\begin{proof}
Theorem \ref{thm: decompostion} implies that $G_\Gamma=G_{\Gamma'}\times H^n$ where $H=D_\infty$ or $\Z$ and the complex $X_{\Gamma'}$ is strictly non-Euclidean. Now if $G_\Gamma$ has no special subgroups of $D_\infty$ or $\Z$, then $n=0$ and thus $X_\Gamma$ is strictly non-Euclidean. Then Theorem \ref{thm: pure inf on B(X)} implies that $\beta$ is minimal topologically free and purely infinite and thus $A=C(B(X_\Gamma))\rtimes_r G_\Gamma$  is unital simple separable and purely infinite by Theorem \ref{thm: pure inf main}. 

It is left to show $A$ is nuclear in the RACG case. First, for the Davis complex $X_\Gamma=\Sigma_\Gamma$, in the irreducible case,  it was proved by \cite[Theorem D]{Le} that $\CR(\Sigma_\Gamma)=B(\Sigma_\Gamma)$. In addition, $\CR(\Sigma_\Gamma)$ can be identified with the horofunction boundary of the Cayley grpah of $W_\Gamma$ with the usual $\ell_1$-metric by a unpublished work of U. Bader and D. Guralnik (see e.g. \cite[Section 1.3]{N-S}). On the other hand, the horofunction boundary of the Cayley grpah of $W_\Gamma$ with the usual $\ell_1$-metric also coincides with the minimal combinatorial boundary $\mathscr{C}_1(\Sigma_\Gamma)\setminus \Sigma_\Gamma$ introduced in \cite{C-L}. See \cite[Theorem 3.1]{C-L}. Then finally it was proved in \cite{Lec}  that the action of $W_\Gamma$ on the minimal combinatorial boundary $\mathscr{C}_1(\Sigma_\Gamma)\setminus \Sigma_\Gamma$ is amenable. Therefore, in the irreducible case, the action $\beta: W_\Gamma\curvearrowright B(\Sigma_\Gamma)$ is amenable using this chain of identifications. However, in our more general case that $X_\Gamma$ is strictly non-Euclidean, the action $\beta$ is a product of amenable actions and thus amenable. Therefore, $A$ is nuclear and thus a Kirchberg algebra. Also, $A$ satisfies the UCT by the result of Tu in \cite{Tu} . 
\end{proof}

If $G_\Gamma$ have sepcial subgroups of $D_\infty$ or $\Z$, we then have the following result on the structure of the reduced crossed products.

\begin{cor}\label{cor: full result on B(X)}
	Let $G_\Gamma=G_{\Gamma'}\times H^n$ be the decomposition in Theorem \ref{thm: decompostion}. Write $A=C(\partial X_\Gamma)\rtimes_r G_\Gamma$. Then one has
	\begin{enumerate}
		\item In the RACG case,  one has $A=(C(\partial X_{\Gamma'})\rtimes_r G_{\Gamma'})\otimes \bigotimes_{i=1}^n (C(\{\breve{0}, \breve{1}\})\rtimes_r D_\infty)$, where $C(X_{\Gamma'})\rtimes G_{\Gamma'}$ is unital simple separable purely infinite.
		\item In the RAAG case, one has $A=(C(\partial X_{\Gamma'})\rtimes_r G_{\Gamma'})\otimes C(\{\breve{0}, \breve{1}\}^n)\otimes C(\T^n)$ in which $\T$ is the unit circle and $C(X_{\Gamma'})\rtimes_r G_{\Gamma'}$ is unital simple separable purely infinite.
	\end{enumerate}
In the RACG case, $A$ is $\CO_\infty$-stable and actually, in either case, $A$ is strongly purely infinite.
\end{cor}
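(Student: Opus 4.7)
My plan is to reduce the corollary to Theorem \ref{thm: main two} via the product structure of both the Nevo-Sageev boundary and of reduced crossed products of direct-product actions. First I would apply Theorem \ref{thm: decompostion} to split $G_\Gamma = G_{\Gamma'} \times H^n$ with $X_{\Gamma'}$ strictly non-Euclidean and $H \in \{D_\infty, \Z\}$ according to the case. Since $B(\cdot)$ is compatible with products of $\cat$ cube complexes (Section 4, following \cite{N-S}), one gets $B(X_\Gamma) \cong B(X_{\Gamma'}) \times \{\breve{0},\breve{1}\}^n$ as $G_\Gamma$-spaces, with the product action. The standard identification
\[
C(Y \times Z) \rtimes_r (G \times H) \;\cong\; (C(Y) \rtimes_r G) \otimes (C(Z) \rtimes_r H),
\]
obtained via the regular representations on $\ell^2(G) \otimes \ell^2(H)$, then yields
\[
A \;\cong\; \bigl(C(B(X_{\Gamma'})) \rtimes_r G_{\Gamma'}\bigr) \otimes \bigl(C(\{\breve{0},\breve{1}\}^n) \rtimes_r H^n\bigr).
\]

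Next I would unfold each tensor factor. Since $G_{\Gamma'}$ contains no special subgroup $D_\infty$ or $\Z$ by construction, Theorem \ref{thm: main two} applies and the first factor is unital, simple, separable and purely infinite, and additionally nuclear in the RACG case. For the second factor in the RACG case, iterating the same product identification coordinatewise delivers the $n$-fold tensor product $\bigotimes_{i=1}^n (C(\{\breve{0},\breve{1}\}) \rtimes_r D_\infty)$ as claimed. In the RAAG case, the action of $\Z$ on $\{\breve{0},\breve{1}\}$ is trivial, so
\[
C(\{\breve{0},\breve{1}\}^n) \rtimes_r \Z^n \;\cong\; C(\{\breve{0},\breve{1}\}^n) \otimes C^*_r(\Z^n) \;\cong\; C(\{\breve{0},\breve{1}\}^n) \otimes C(\T^n),
\]
which yields the stated formula.

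For the final sentence, in the RACG case the first tensor factor is a Kirchberg algebra by Theorem \ref{thm: main two}, hence $\CO_\infty$-absorbing; because $\CO_\infty$-absorption trivially passes to any minimal tensor product (just rebalance $A \otimes \CO_\infty \cong A$ in the tensor expression for $A$), $A$ itself is $\CO_\infty$-stable and therefore strongly purely infinite by \cite[Theorem 8.6]{Kir-Rord}. In the RAAG case the first factor is simple and purely infinite, so it is already strongly purely infinite by the Kirchberg-R{\o}rdam characterization; the main obstacle is to promote this to strong pure infiniteness of $A$, since the first factor need not be nuclear. For this I would invoke the permanence result of Kirchberg-R{\o}rdam (\cite{Kir-Rord}) stating that tensoring a strongly purely infinite $C^*$-algebra with a unital (in particular nuclear) $C^*$-algebra preserves strong pure infiniteness, applied to the unital commutative factor $C(\{\breve{0},\breve{1}\}^n) \otimes C(\T^n)$.
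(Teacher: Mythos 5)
Your argument follows the paper's proof essentially step for step: the same decomposition of the action into $\beta_1\times\beta_2$, the same tensor-product identification of the reduced crossed product, the same appeal to Theorem \ref{thm: main two} for the first factor, and the same $\CO_\infty$-stability argument in the RACG case. The one point to fix is the last step in the RAAG case: you invoke ``a permanence result of Kirchberg--R{\o}rdam stating that tensoring a strongly purely infinite $C^*$-algebra with a unital (in particular nuclear) $C^*$-algebra preserves strong pure infiniteness.'' No such result in that generality appears in \cite{Kir-Rord}, and whether $A\otimes B$ is (strongly) purely infinite for $A$ strongly purely infinite and $B$ an arbitrary unital $C^*$-algebra is not known; it is only available for special classes of $B$. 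The instance you actually need --- tensoring with the unital \emph{commutative} algebra $C(\{\breve{0},\breve{1}\}^n)\otimes C(\T^n)$ --- is precisely what Kirchberg--Sierakowski \cite[Theorem 1.3]{Kir-S} provides (that $C(X)\otimes A$ is strongly purely infinite when $A$ is), and that is the reference the paper uses. With that citation substituted, your proof is correct and coincides with the paper's.
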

\begin{proof}
In the context of the proof of Theorem \ref{thm: pure inf on B(X)}, one can decompose $\beta: G_\Gamma\curvearrowright B(X_\Gamma)$ into the product action of $\beta_1: G_{\Gamma'}\curvearrowright B(X_{\Gamma'})$ and  $\beta_2: H^n\curvearrowright \{\breve{0}, \breve{1}\}^n$, which implies that $C(\partial X_\Gamma)\rtimes_r G_\Gamma$ is the tensor product of the reduced product  $C(\partial X_{\Gamma'})\rtimes_r G_{\Gamma'}$ of $\beta_1$ with the reduced crossed product of $\beta_2$. Note that $C(\partial X_{\Gamma'})\rtimes_r G_{\Gamma'}$ is unital simple separable purely infinite by Theorem \ref{thm: main two}. 

On the other hand, in the RACG case, the reduced crossed products of $\beta_2$ is exactly $\bigotimes_{i=1}^n (C(\{\breve{0}, \breve{1}\})\rtimes_r D_\infty)$. In the RAAG case, because the action $\beta_2$ is trivial, the corresponding crossed product is $C(\{\breve{0}, \breve{1}\}^n)\otimes C^*_r(\Z^n)=C(\{\breve{0}, \breve{1}\}^n)\otimes C(\T^n)$.

In the RACG case, because $C(\partial X_{\Gamma'})\rtimes_r G_{\Gamma'}$ is a Kirchberg algebra and thus $\CO_\infty$-stable. Therefore, $A$ is also $\CO_\infty$-stable and thus strongly purely infinite. In the RAAG case, $C(\partial X_{\Gamma'})\rtimes_r G_{\Gamma'}$ is simple and purely infinite, and thus strongly purely infinite by \cite[Theorem 9.1]{Kir-Rord}.  Then it follows from \cite[Theorem 1.3]{Kir-S} that $A$ is strongly purely infinite as well.
\end{proof}

\begin{rmk}\label{rmk: aukward}
	We remark that Theorem \ref{thm: main two}(1) has generalized the result in \cite[Example 4.8]{G-G-K-N} because the boundary considered there can also be identified with the horofunction boundary of the Cayley grpah of $W_\Gamma$ with the usual $\ell_1$-metric.  which is exactly the Nevo-Saggev boundary as we have shown in the proof of Theorem \ref{thm: main two}. On the other hand, after the minimality and topologically freeness as well as the amenability of the action $\beta$ have been established by using the method in this section, one can apply the result \cite[Theorem B]{G-G-K-N} to obtain a different proof of this result.  However, at this moment, the results in \cite{G-G-K-N} do not apply to \ref{thm: main two}(2). The main obstruction is that,  to the best knowledge of the authors, it is unknown whether the action of an irreducible non-amenable RAAG on the Nevo-Sageev boundary $B(X)$ is amenable.
\end{rmk}

\section{Visual Boundary $\partial X$}
\subsection{Actions on irreducible $\cat$ cube complexes}

In this section, we focus on actions of certain groups on the visual boundaries of some $\cat$ spaces, especially on the visual boundaries of $\cat$ cube complexes in this subsection. We will deal with actions on the visual boundaries of trees in the next subsection.

We begin with the definition of the visual boundary. Let $(X, d)$ be a $\cat$ space, we say two geodesic rays $c_1, c_2: [0, \infty)\to X$ are \textit{asymptotic} if there is a $C>0$ such that $d(c_1(t), c_2(t))<C$ for any $t\in [0, \infty)$. We remark that being asymptotic is an equivalence relation for geodesic rays. Denote by $\partial X$ the set of equivalence classes, which is called the boundary set of $X$. In addition, for any geodesic $c: [0, \infty)\to X$, we denote by $c(\infty)$ the equivalence class containing $c$. We further remark that \cite[Proposition I. 8.2]{B-H} shows that for any geodesic $\alpha: [0, \infty)\to X$ and any $x\in X$, there is a unique geodesic ray $\beta$ starting at $x$ with $\beta(\infty)=\alpha(\infty)$. Therefore, it suffices to consider all geodesic rays starting at a chosen base point  $x_0\in X$. Let $\alpha$ be a geodesic ray starting at $x_0$ and $r, \epsilon>0$.  Consider the following set 
\[U_{\alpha(\infty), r, \epsilon}=\{\beta(\infty)\in \partial X: \beta(0)=x_0\ \text{and } d(\alpha(t), \beta(t))<\epsilon\ \text{for all }t<r\}.\]
Note that such sets form a neighborhood basis for the geodesic $\alpha$ and thus all such sets induce a compact metrizable topology on $\partial X$, which is called the \textit{cone topology}. Finally, it was proved in \cite[Proposition I. 8.8]{B-H}  that the cone topology is independent of the choice of the base point.

\begin{defn}
	Let $(X, d)$ be a $\cat$ metric space.  The boundary set $\partial X$ is called the \textit{visual boundary} of $X$ when equipped with the cone topology. 
\end{defn}

If $X$ is Gromov hyperbolic, then $\partial X$ is exactly the classical Gromov boundary of $X$. Denote by $\isom(X)$ the isometry group on $X$. It goes back to Gromov \cite[Section 8.2]{Gro} that any hyperbolic (loxodromic) element $g\in \isom(X)$ performs so-called \textit{north-south dynamics} on the boundary $\partial X$ as a homeomorphism in the following sense.

\begin{defn}
	Let $X$ be a topological space and $g$ is an homeomorphism of $X$. We say $g$ has north-south dynamics with respect to two fixed points $x, y\in X$, which are called \textit{attracting} and \textit{repelling} fixed points of $g$, respectively, if for any open neighborhoods $U$ of $x$ and $V$ of $y$, there is an $m\in \N$ and such that $g^m(X\setminus V)\subset U$ and $g^{-m}(X\setminus U)\subset V$.
\end{defn}

Now let $X$ be a proper $\cat$ space, which is not necessary hyperbolic. It is actually known that any rank-one isometry $g\in \isom(X)$ has the north-south dynamics on the visual boundary $\partial X$ (see e.g. \cite{B-B} and \cite{C-S}).  In general, North-south dynamics have a very strong flavor of pure infiniteness. Indeed, we have the following proposition. We remark that similar arguments also appeared in \cite{A-D} and \cite{L-S}. 
\begin{prop}\label{prop: north-south visual}
	Let $\alpha:G\curvearrowright Y$ be a continuous minimal action of a discrete group $G$ on a infinite compact Hausdorff space $Y$. Suppose there is a $g\in G$  performing the north-south dynamics. Then $\alpha$ is 2-filling and thus is purely infinite. In addition, $\alpha$ is a $G$-boundary action.
\end{prop}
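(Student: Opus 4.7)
The plan is to establish the $2$-filling property directly from the north-south dynamics together with minimality, and then invoke the results already assembled in Section 3. Fix attracting and repelling fixed points $x,y \in Y$ of $g$ provided by the north-south hypothesis, and let $O_1, O_2 \subseteq Y$ be arbitrary non-empty open sets. The idea is to translate $O_1$ and $O_2$ so that one catches $x$ and the other catches $y$, and then exploit the contracting power of $g^m$ to cover the complement. Minimality of $\alpha$ ensures that the orbits $G\cdot x$ and $G\cdot y$ are dense in $Y$, so one can choose $h_1, h_2 \in G$ with $h_1^{-1}x \in O_1$ and $h_2^{-1}y \in O_2$; equivalently, $U := h_1 O_1$ is an open neighborhood of $x$ and $V := h_2 O_2$ is an open neighborhood of $y$.

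Applying the definition of north-south dynamics to $U$ and $V$ supplies an $m \in \N$ with $g^m(Y\setminus V)\subseteq U$. Since $g^m$ is a bijection of $Y$, this rewrites as $Y\setminus g^m V \subseteq U$, so
\[
Y \;=\; U \cup g^m V \;=\; h_1 O_1 \cup (g^m h_2)\, O_2,
\]
which is the $2$-filling property witnessed by $g_1 := h_1$ and $g_2 := g^m h_2$.

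Once $2$-filling is established, the remaining conclusions come for free from the earlier material. A $2$-filling action is precisely a strong boundary action in the sense of Definition \ref{defn: strong boundary}, and by the result of Glasner cited immediately after that definition, every strong boundary action is a $G$-boundary action. For pure infiniteness, the observation recorded just after Definition \ref{defn: n-filling} states that every $n$-filling action satisfies dynamical comparison and admits no $G$-invariant probability measures, so the implication (i)$\Rightarrow$(ii) of Theorem \ref{thm: pure infiniteness of action} yields pure infiniteness of $\alpha$.

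I do not anticipate a substantive obstacle: the only step carrying real content is the one-line combination of minimality with the contracting dynamics to manufacture $g_1$ and $g_2$, and everything else is already packaged in the infrastructure of Section 3. The single implicit subtlety is that the hypothesis $|Y|=\infty$ is used to prevent the degenerate situation in which $Y$ consists of a finite orbit of fixed points, which would render the north-south dynamics vacuous; this is automatic in the present setting since $\alpha$ is assumed minimal on an infinite compact space.
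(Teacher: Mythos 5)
Your proof is correct and follows essentially the same route as the paper's: use minimality to move the given open sets so that they become neighborhoods of the attracting and repelling fixed points, then apply the north-south contraction of $g^m$ to cover $Y$ by two translates, and quote the standard implications ($2$-filling $\Rightarrow$ strong boundary $\Rightarrow$ $G$-boundary, and $n$-filling $\Rightarrow$ dynamical comparison with $M_G(Y)=\emptyset$ $\Rightarrow$ purely infinite). The only cosmetic difference is that you pull $O_1,O_2$ back onto the fixed points via $U=h_1O_1$, $V=h_2O_2$, whereas the paper shrinks neighborhoods of the fixed points and pushes them into $O_1,O_2$; the two are interchangeable.
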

\begin{proof}
	Let  $O_1, O_2$ be non-empty open sets in $Y$. Suppose $x, y$ are attracting and repelling fixed points of $g$, respectively. First, by minimality of the action, one can find two open neighborhoods $U, V$ of $x, y$, respectively, small enough such that  there are $\gamma_1, \gamma_2\in G$ such that $\gamma_1V\subset O_1$ and $\gamma_2U\subset O_2$.
	Now our assumption on $g$ implies that there is an $m\in \N$ such that $g^m(Y\setminus V)\subset U$, which implies $\gamma_2g^m(Y\setminus V)\subset O_2$. Then one observes that $Y=(\gamma_2g^m)^{-1}O_2\cup \gamma_1^{-1}O_1$. This shows that $\alpha$ is $2$-filling and thus purely infinite. Then $\alpha$ is a strong boundary action and thus is a $G$-boundary action.
\end{proof}

We then mainly focus on the case that a countable discrete group $G$ acting on a proper $\cat$ cube complex $X$.
We first recall the following result in \cite{Ham}. If $G$ acts on a proper $\cat$ space $X$ by isometry. The \textit{limit} set $\Lambda$ of $G$ is the set of accumulation points in $\partial X$ of an orbit of the action, which is closed and $G$-invariant. The action is called \textit{elementary} if either its limit set $\Lambda$ consists of at most two points or if $G$ fixes a point in $\partial X$. 

\begin{prop}\cite[Theorem 1.1]{Ham}\label{prop: minimal on visual}
	Let $G$ acts  by isometry on a proper $\cat$ space $X$ non-elementarily with the limit set $\Lambda$.  Suppose $G$ contains a rank-one isometry. Then $\Lambda$ is perfect and the induced action $\alpha: G\curvearrowright \Lambda$ is minimal. 
\end{prop}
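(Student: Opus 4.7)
The plan is to exploit the north-south dynamics of the rank-one element on $\partial X$ (cf.\ \cite{B-B}, \cite{C-S}) in the style of Hamann's argument. Fix a rank-one $g\in G$ with attracting and repelling fixed points $\xi_+,\xi_-\in\partial X$; for any base point $x_0\in X$, $g^{\pm n}x_0\to\xi_\pm$, so $\xi_\pm\in\Lambda$. For any $h\in G$ the conjugate $hgh^{-1}$ is again rank-one with attracting/repelling fixed points $h\xi_\pm$, hence also performs north-south dynamics with respect to $h\xi_\pm$.

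\textbf{Density and minimality.} First I would show that $F:=\{h\xi_+:h\in G\}$ is dense in $\Lambda$. Given $\zeta\in\Lambda$, pick $h_n\in G$ with $h_n x_0\to\zeta$. The rank-one axis of $g$ passing near $x_0$ has endpoints $\xi_\pm$ and satisfies the contracting projection property; translating by $h_n$ gives axes with endpoints $h_n\xi_\pm$ passing within a uniform distance of $h_n x_0$, and a standard projection argument yields $h_n\xi_+\to\zeta$ (after passing to a subsequence, and replacing $g$ by $g^{-1}$ if necessary). For minimality, fix $\eta\in\Lambda$ and an open $U\subseteq\partial X$ meeting $\Lambda$ at some $\zeta\in U\cap\Lambda$; by density of $F$ I pick $h\in G$ with $h\xi_+\in U$, and further require $h\xi_-\ne\eta$. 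Such an $h$ exists by non-elementarity: if every translate of $\xi_+$ near $\zeta$ forced $h\xi_-=\eta$, then $\Lambda$ would collapse to a set of at most two points, contradicting $|\Lambda|\ge 3$. Since $hgh^{-1}$ is rank-one with attracting point $h\xi_+\in U$ and repelling point $h\xi_-\ne\eta$, north-south dynamics give $(hgh^{-1})^n\eta\to h\xi_+\in U$ for large $n$, proving $G\eta$ is dense in $\Lambda$.

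\textbf{Perfectness.} Suppose for contradiction that $\Lambda$ contains an isolated point. Minimality then forces every point of $\Lambda$ to be isolated, and closedness of $\Lambda$ in the compact $\partial X$ makes $\Lambda$ finite. By non-elementarity $|\Lambda|\ge 3$. Since $g$ permutes the finite set $\Lambda$, some power $g^k$ fixes $\Lambda$ pointwise; but $g^k$ is still rank-one and by its north-south dynamics every fixed point of $g^k$ in $\partial X$ must be either $\xi_+$ or $\xi_-$, forcing $\Lambda\subseteq\{\xi_+,\xi_-\}$, a contradiction.

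\textbf{Main obstacle.} The technical heart of the argument is the density of $F$ in $\Lambda$: lifting convergence $h_n x_0\to\zeta$ in $X\cup\partial X$ to convergence $h_n\xi_+\to\zeta$ in the visual topology on the boundary. This is exactly where the rank-one hypothesis is genuinely used, through contracting projections that pin each translated axis $h_n\cdot\mathrm{axis}(g)$ close to $h_n x_0$ and thereby control the behavior of $h_n\xi_\pm$ at infinity. Without rank-one, convergence of a base-point orbit carries essentially no information about where translates of a given boundary point go in $\partial X$, and minimality on the limit set can genuinely fail.
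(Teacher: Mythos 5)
The paper does not actually prove this proposition: it is quoted verbatim from Hamenst\"adt \cite[Theorem 1.1]{Ham} and used as a black box, so there is no in-paper argument to compare against. Your sketch reconstructs the standard (essentially Hamenst\"adt's original) strategy: north--south dynamics of rank-one elements, density of the translates of their fixed points in $\Lambda$ via the contracting property of rank-one axes, then minimality and perfectness as formal consequences. The skeleton is right, and you correctly identify the density step as the genuine technical content. One small wrinkle there: depending on which side of the projection of the basepoint the points $h_nx_0$ land, the "standard projection argument" gives $h_n\xi_+\to\zeta$ \emph{or} $h_n\xi_-\to\zeta$, so what you actually obtain is density of $G\xi_+\cup G\xi_-$ (equivalently, of the attracting fixed points of rank-one conjugates of $g^{\pm1}$) in $\Lambda$. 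That is all the rest of the argument needs, but the set $F=\{h\xi_+\}$ alone is not what the projection argument delivers.

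The one step whose stated justification does not work is in the minimality paragraph. You claim that if every $h$ with $h\xi_+\in U$ satisfied $h\xi_-=\eta$, then $\Lambda$ would have at most two points; but that hypothesis constrains only the elements $h$ sending $\xi_+$ into $U$ and says nothing about the rest of $\Lambda$, so no collapse follows. The conclusion is still reachable by the standard detour: fix one $h_0$ with $a_0:=h_0\xi_+\in U$ and set $g_0=h_0gh_0^{-1}$; if $\eta$ happens to equal the repelling point $h_0\xi_-$ of $g_0$, first use non-elementarity (no global fixed point in $\partial X$) to find $k\in G$ with $k\eta\neq h_0\xi_-$, and then $g_0^{\,n}k\eta\to a_0\in U$, which is all that minimality requires. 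With that repair, together with the perfectness argument (which is correct as written: an isolated point plus minimality forces $\Lambda$ finite, a power of $g$ fixes it pointwise, and north--south dynamics confine the fixed points of that power to $\{\xi_+,\xi_-\}$, contradicting $|\Lambda|\geq3$), your outline is a faithful proof of the cited theorem.
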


In fact, if the action is cocompact and the visual boundary is non-trivial, we may see more.
\begin{rmk}\label{rmk: non-elementary}
It was proved in \cite{B-B} that if $|\partial X|>2$ and the action of $G$ on $X$ cocompactly by isometry then the action is necessarily non-elementary and the limit set $\Lambda=\partial X$. Note that  it follows that there is no global fixed point in $\partial X$ for $G$ in this case. See more in \cite{B-B} and \cite{Ham}.
\end{rmk}
  
 We denote by $P(G)$ the set of all probability measures on $G$. Let $G$ acts on a metric space $(X, d)$ by isometry. A probability measure $\mu\in P(G)$ is said to have \textit{finite first moment} if $\sum_{g\in G}d(y, gy)\mu(g)<\infty$. A measure $\mu\in P(G)$ is said to be \textit{generating} if the support of $\mu$ generates $\Gamma$ as a semigroup, i.e., for any $g\in G$ there are several $h_1,\dots, h_n\in \supp(\mu)$ such that $g=h_1\dots h_n$. For a $\mu\in P(G)$, we denote by $\check{\mu}$ the reflected measure of $\mu$ on $G$ defined by $\check{\mu}(g)=\mu(g^{-1})$ for $g\in G$. Note that it is very easy to find a generating $\mu\in P(G)$ with finite first moment for a finitely generated group $G$.
Now we have the following result.

\begin{lem}\label{thm: pure inf on visual}
	Let $G$ be a $C^*$-simple finitely generated group and acts properly, freely, essentially and cocompactly by isometry on a proper irreducible finite dimensional $\cat$ cube complex $X$ such that
	\begin{enumerate}
	\item $|\partial X|>2$ and 
	\item  there is a vertex $x\in X^0$ such that the map $g\mapsto gx$ from $G$ to $X$ is a $(c, b)$-quasi-isometric map for some $c, b>0$.
	\end{enumerate}
	Then the reduced crossed product $C(\partial X)\rtimes_r G$ of the induced action $\alpha: G\curvearrowright \partial X$ is simple and purely infinite.
\end{lem}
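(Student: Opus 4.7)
The plan is to apply Theorem \ref{thm: pure inf main}(1): it suffices to show that the induced action $\alpha:G\curvearrowright \partial X$ is minimal, topologically free, and purely infinite. The first ingredient I would produce is a rank-one isometry in $G$. Since $X$ is irreducible, finite dimensional, and $G$ acts essentially and cocompactly, the Caprace--Sageev rank-rigidity theorem (\cite{C-S}) furnishes a dichotomy: either $X$ is isometric to the real line, or $G$ contains a rank-one isometry. The assumption $|\partial X|>2$ rules out the first alternative, so a rank-one element $g\in G$ exists.

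With $g$ in hand, minimality and pure infiniteness of $\alpha$ fall out of the tools already developed. By Remark \ref{rmk: non-elementary}, cocompactness together with $|\partial X|>2$ forces the action to be non-elementary with limit set $\Lambda=\partial X$, and Proposition \ref{prop: minimal on visual} then yields that $\alpha$ is minimal (and $\partial X$ is perfect). Since rank-one isometries perform north--south dynamics on the visual boundary (see \cite{C-S}, \cite{Ham}), Proposition \ref{prop: north-south visual} applies to $g$ and shows that $\alpha$ is $2$-filling, hence a $G$-boundary action and purely infinite in the sense of Definition \ref{defn: comparison-pure infiniteness}.

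The delicate step is topological freeness. Since $\alpha$ is already a $G$-boundary action and $G$ is $C^*$-simple, \cite[Proposition 1.9]{B-K-K-O} reduces the task to showing that $\stab_G(\xi)$ is amenable for every $\xi\in\partial X$. Because $G$ acts properly, freely, and cocompactly on the finite-dimensional $\cat$ cube complex $X$, every non-trivial element is semisimple and in fact hyperbolic; hence $\stab_G(\xi)$ consists of hyperbolic isometries sharing the fixed point $\xi$ at infinity. Using the quasi-isometric orbit map from hypothesis~(2) together with the axis/flat structure governing commuting hyperbolic isometries in cubical geometry, one argues that $\stab_G(\xi)$ is virtually abelian, and in particular amenable. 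Routing this via a $G$-equivariant comparison with the Roller boundary (where amenability of stabilizers is Theorem \ref{thm: amenable stabilizer on B(X)}) could be an alternative, but the cubical/quasi-isometry argument seems most direct. This amenable-stabilizer verification is the main technical obstacle.

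Once topological freeness is in place, all hypotheses of Theorem \ref{thm: pure inf main}(1) hold, and we conclude that $C(\partial X)\rtimes_r G$ is unital, simple, separable, and (strongly) purely infinite, as desired.
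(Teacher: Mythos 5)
The first two paragraphs of your argument coincide with the paper's proof: rank rigidity of Caprace--Sageev produces a rank-one element (the absence of a global fixed point at infinity being supplied by Remark \ref{rmk: non-elementary}), Proposition \ref{prop: minimal on visual} gives minimality and perfectness of $\partial X$, and Proposition \ref{prop: north-south visual} gives the $2$-filling property, hence pure infiniteness and the $G$-boundary property. That part is fine.

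The gap is in the topological freeness step, and it is not merely a missing detail: the claim that $\stab_G(\xi)$ is virtually abelian (or even amenable) for \emph{every} $\xi\in\partial X$ is false under the hypotheses of the lemma. Take $\Gamma$ to be the path $a$--$b$--$c$--$d$ on four vertices; this graph has no join, so $G=A_\Gamma$ acting on $X=\tilde{S_\Gamma}$ satisfies all the hypotheses (it is an instance of Theorem \ref{cor: main3}(2)). Let $\ell$ be an axis of the hyperbolic isometry $b$ and $\xi=\ell(\infty)\in\partial X$. Since $a$ commutes with $b$, the line $a\ell$ is again an axis of $aba^{-1}=b$, and any two axes of the same hyperbolic isometry are parallel, hence asymptotic; thus $a\xi=\xi$, and likewise $c\xi=\xi$. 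Therefore $\stab_G(\xi)\supseteq\la a,c\ra$, which is the special subgroup on two non-adjacent vertices, i.e.\ a free group of rank $2$. So no argument can establish amenability of all visual-boundary stabilizers, and the ``axis/flat structure'' heuristic breaks down exactly because, even in an irreducible complex, a hyperbolic element may have many isometries fixing its endpoint at infinity without the group they generate being elementary.

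What the paper actually does --- and what your sketch relegates to an aside --- is to route the stabilizer question through the Roller boundary, and this is precisely where hypothesis (2), which your proof never genuinely uses, enters. The quasi-isometric orbit map yields the growth estimate $|\{g\in G: d(x,gx)\le n\}|\le e^{Cn}$ needed to apply the Karlsson--Margulis theorem \cite[Corollary 6.2]{K-M}, which realizes $(\partial X,\mu_{\pm})$ as Furstenberg--Poisson boundaries of a random walk on $G$; Fern\'os's theorem \cite[Theorem 7.1]{Fer} then produces a measurable $G$-equivariant map $\varphi:\partial X\to\CR(X)$, along which amenability of stabilizers is transferred from Theorem \ref{thm: amenable stabilizer on B(X)}, and only then does \cite[Proposition 1.9]{B-K-K-O} apply. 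There is no obvious continuous equivariant comparison map from the visual boundary to the Roller boundary, so the measurable boundary theory is doing real work; without it (or without some other argument exhibiting a boundary point with amenable stabilizer compatible with the \cite{B-K-K-O} criterion, e.g.\ the attracting fixed point of the rank-one element), the topological freeness step remains unproven.
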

\begin{proof}
	First, Remark \ref{rmk: non-elementary} implies that  there is no global fixed point of $G$ on $\partial X$.  Then because $X$ is irreducible and the action of $G$ on $X$ by isometry is essential and proper, it follows from \cite[Theorem A]{C-S} that $G$ has a rank-one isometry $g$. Now, Proposition \ref{prop: minimal on visual} implies that $\partial X$ is perfect and the induced action of $G$ on $\partial X$ is minimal.  Note that the rank-one isometry $g$ performs the north-south dynamics on $\partial X$. Therefore, Proposition \ref{prop: north-south visual} implies that $\alpha$ is a $G$-boundary action and thus $2$-filling.
	
	Choose a generating probability measure $\nu$ on $G$ with finite first moment. Denote by $\rho$ the usual word metric on $G$ with respect to a finite generating set. The assumption (2) implies that 
	\[(1/c)\rho(1_G, g)-b\leq d(x, gx)\leq c\rho(1_G, g)+b,\]
	whence for any $n\in \N$, one has 
	$\{g\in G: d(x, gx)\leq n\}\subset B_\rho(1_G, c(b+n))$,
	where $B_\rho(1_G, c(b+n))$ is the ball in $G$ centered at $1_G$ with radius $c(b+n)$. Since $G$ is finitely generated, it has to be at most exponential growth. Therefore, there is a $C>0$ such that 
	\[|\{g\in G: d(x, gx)\leq n\}|\leq |B_\rho(1_G, c(b+n))|\leq e^{Cn}.\]
Then, since the action is additionally minimal, it follows from \cite[Corollary 6.2]{K-M}  that there are two Probability measure $\mu_+$ and $\mu_-$ on the visual boundary $\partial X$ such that  $(\partial X, \mu_+)$ and $(\partial X, \mu_-)$ are Furstenberg-Possion boundaries of $(G, \nu)$ and $(G, \check{\nu})$, which forms a \textit{boundary pair} in the sense of \cite[Definition 2.3]{B-F} by  \cite[Theorem 5.5]{Fer}. Then since the action of $G$ is non-elementary by Remark \ref{rmk: non-elementary}, it follows from \cite[Theorem 7.1]{Fer} that there is a measurable $G$-equivariant map $\varphi: \partial X\to \CR(X)$, which implies that for any $y\in \partial X$ and $g\in G$, if $gy=y$ then $g\varphi(y)=\varphi(y)$. Therefore, the stabilizer group $\stab_G(y)$ is a subgroup of $\stab_G(\varphi(y))$, which is amenable by Theorem \ref{thm: amenable stabilizer on B(X)}. It follows that $\stab_G(y)$ is amenable as well. Now since the  $\partial X$ is a $G$-boundary and $G$ is $C^*$-simple, the action $\alpha$ is topologically free by \cite[Proposition 1.9]{B-K-K-O}. Therefore the reduced crossed product $C(\partial X)\rtimes_r G$ is simple and purely infinite by Theorem \ref{thm: pure inf main}.
\end{proof}
Our main application is still for actions of $G_\Gamma$ on $X_\Gamma$, where $G_\Gamma$ is a RACG or a RAAG while $X_\Gamma$ is the corresponding Davis complex or the universal cover of the Salvetti complex. We have the following main theorem in this subsection.

\begin{thm}\label{cor: main3}
	Let $\Gamma=(V, E)$ be a finite simple graph without joins. 
	\begin{enumerate}
		\item if $|V|\geq 3$ then $C(\partial \Sigma_\Gamma)\rtimes_r W_\Gamma$ is simple and purely infinite.
		\item If $|V|\geq 2$ then $C(\partial \tilde{S_\Gamma})\rtimes_r A_\Gamma$ is simple and purely infinite.
	\end{enumerate}
\end{thm}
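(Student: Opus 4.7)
The plan is to apply Lemma~\ref{thm: pure inf on visual} to both actions $W_\Gamma \curvearrowright \Sigma_\Gamma$ (for~(1)) and $A_\Gamma \curvearrowright \tilde{S_\Gamma}$ (for~(2)), and the work consists entirely in verifying its hypotheses under the stated size and no-join assumptions. Throughout I would exploit that the $1$-skeletons of $\Sigma_\Gamma$ and $\tilde{S_\Gamma}$ are the Cayley graphs of $W_\Gamma$ and $A_\Gamma$ respectively, so both groups act vertex-transitively, properly, and cocompactly by isometries on finite-dimensional $\cat$ cube complexes.

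First I would verify that $X_\Gamma$ is irreducible and essential, and that the $G_\Gamma$-action is essential. Irreducibility of the complex follows from Proposition~\ref{prop: decomposition cub cpx} together with the no-join assumption. Essentialness of $\tilde{S_\Gamma}$ is automatic by Proposition~\ref{prop: essential racg and raag}, while the same proposition reduces essentialness of $\Sigma_\Gamma$ to the absence of an isolated vertex in $\Gamma^c$; such a vertex would be adjacent in $\Gamma$ to every other vertex of $\Gamma$, which with $|V|\geq 2$ would already produce a join, contrary to hypothesis. Essentialness of the group action is then immediate from vertex-transitivity together with essentialness of the complex.

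Next I would verify $C^*$-simplicity of $G_\Gamma$: in the RAAG case this is exactly Lemma~\ref{lem: C simple raag}, since $\Gamma$ has no join and $|V|\geq 2$ excludes $A_\Gamma=\Z$; in the RACG case the classical fact recalled just before Lemma~\ref{lem: C simple raag} (irreducible infinite RACGs with $|V|\geq 3$ are $C^*$-simple) supplies the conclusion. The quasi-isometry hypothesis~(2) of Lemma~\ref{thm: pure inf on visual} follows from the \v{S}varc--Milnor lemma applied to the proper cocompact isometric action at any base vertex, combined with Lemma~\ref{lem: quasi-isometric} to pass between the $\cat$ metric and the combinatorial metric on the $1$-skeleton.

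The step I expect to be the main obstacle is the lower bound $|\partial X_\Gamma|>2$. The natural approach is by contradiction: if $|\partial X_\Gamma|\leq 2$, then since $X_\Gamma$ is unbounded and admits a proper cocompact isometric action, $X_\Gamma$ must be quasi-isometric to $\R$, and being already essential and irreducible this forces $X_\Gamma$ to be Euclidean in the sense of Section~4. Lemma~\ref{lem: structure of cpx} would then force $W_\Gamma\cong D_\infty$ in the RACG case or $A_\Gamma\cong \Z$ in the RAAG case, and each of these possibilities is excluded by $|V|\geq 3$ together with the no-join assumption (respectively by $|V|\geq 2$). Once $|\partial X_\Gamma|>2$ is in hand, Lemma~\ref{thm: pure inf on visual} applies directly in both cases and yields simplicity and pure infiniteness of the reduced crossed product.
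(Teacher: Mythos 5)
Your proposal is correct and follows essentially the same route as the paper: the paper's proof also consists of applying Lemma~\ref{thm: pure inf on visual} after noting $C^*$-simplicity, finite generation, the Cayley-graph description of the $1$-skeleton, and the fact that $|\partial X_\Gamma|>2$. You simply supply more detail than the paper does (notably your contradiction argument for $|\partial X_\Gamma|>2$ via quasi-isometry to $\R$ and Lemma~\ref{lem: structure of cpx}, where the paper just cites this as known), and those details check out.
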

\begin{proof}
We still write $G_\Gamma$ for $W_\Gamma$ or $A_\Gamma$ and $X_\Gamma$ for the corresponding complexes for simplicity. For the cases mentioned above, $G_\Gamma$ is finitely generated and $C^*$-simple. In addition, it is known in this irreducible case, the boundary $\partial X_\Gamma$ contains more than $2$ elements. Finally, since the $1$-skeletons of $X_\Gamma$ is  the Cayley graph of $G_\Gamma$, the other conditions of Lemma \ref{thm: pure inf on visual} are easy to be verified.
\end{proof}

\subsection{Actions on Bass-Serre trees}
Another important case involving the visual boundary is that groups act on the visual boundary of trees, especially actions of the fundamental group of a graph of groups on the boundary of its Bass-Serre tree.   We refer to \cite{Ser} and \cite{B-M-P-S-T} for notations and backgrounds. However, we follow the notations in \cite{B-M-P-S-T} and still recall necessary concepts here. Given a graph $\Gamma=(V, E)$, from the viewpoint of groupoids, one may identify the vertex set $V$ with the unit space of $\Gamma$ and the edge set $E$ with ``arrows'' in the groupoids.  Then one may define the \textit{source} and \textit{range} maps of an edge, which provides a direction of each edge.  We abuse the notation by still denoting $E$ for all directed edges. This also allows  to define the ``edge-reversing'' map from $E$ to $E$ by $e\mapsto \bar{e}=e^{-1}$. It is not hard to see $\bar{e}\neq e$, $\bar{\bar{e}}=e$, $s(e)=r(\bar{e})$ and $r(e)=s(\bar{e})$. If we assign indexes for $e$, e.g., $e_i$, then we write $\bar{e}_i$ for its reverse.

\begin{defn}\label{defn: locally finite graph of groups}
A \textit{graph of groups} $\CG=(\Gamma, G)$ consists a connected graph $\Gamma=(V, E)$ and a system of groups:
\begin{enumerate}
	\item a \textit{vertex group} $G_v$ for each $v\in V$;
	\item an \textit{edge group} $G_e$ for each $e\in E$ such that $G_e=G_{\bar{e}}$; and
	\item a monomorphism $\alpha_e: G_e\to G_{r(e)}$ for each $e\in E$.
\end{enumerate}
\end{defn}
For simplicity, we denote by $1_v$ and $1_e$ the identity elements in $G_v$ and $G_e$, respectively. We also write $1$ if the context is clear. A graph $\Gamma=(V, E)$ is said to be \textit{locally finite} if $|r^{-1}(v)|<\infty$ for any $v\in V$. We also say a graph of groups $\CG=(\Gamma, G)$ is locally finite if 
\begin{enumerate}
	\item the underlying graph $\Gamma$ is locally finite; and
	\item $[G_{r(e)}: \alpha_e(G_e)]<\infty$ for any $e\in E$.
\end{enumerate}
The graph of groups $\CG=(\Gamma, G)$ is also called \textit{non-singular} if $[G_{r(e)}: \alpha_e(G_e)]>1$ whenever $r^{-1}(r(e))=\{e\}$. We remark that all graphs $\CG=(\Gamma, G)$ in this subsection are locally finite and non-singular.

\begin{defn}\cite[Definition 2.5]{B-M-P-S-T}
Let $\CG=(\Gamma, G)$ be a graph of groups. The \textit{path group}, denoted by $\pi(\CG)$ is the group generated by the set $E\sqcup \bigsqcup_{v\in V}G_v$ modulo the relations
\begin{enumerate}
	\item[(R1)] $\bar{e}e=1$ for all $e\in E$
	\item[(R2)] $e\alpha_{\bar{e}}(g)\bar{e}=\alpha_e(g)$ for all $e\in E$ and $g\in G_e=G_{\bar{e}}$.
\end{enumerate}
\end{defn}

\begin{defn}\cite[Definition 2.4]{B-M-P-S-T}\label{defn: G-words}
Let $\CG=(\Gamma, G)$ be a graph of groups. For each $e\in E$,  we fix a transversal $\Sigma_e$ for $G_{r(e)}/\alpha_e(G_e)$ with $1_{r(e)}\in \Sigma_e$.
	\begin{enumerate}
\item A $\CG$-\textit{word} (of length $n$) is a sequence of the form
	\[g_1,\ \ \ \text{or } g_1e_1g_2e_2\dots g_ne_n,\ \ \ \text{or } g_1e_1g_2e_2\dots g_ne_ng_{n+1}\]
	such that $s(e_i)=r(e_{i+1})$ for $1\leq i\leq n-1$, $g_j\in G_{r(e_j)}$ for $1\leq j\leq n$ and $g_{n+1}\in G_{s(e_n)}$. In the case $n=0$, the element $g_1\in G_v$ for some $v\in V$.
	\item A reduced $\CG$-word is a $\CG$-word in which if $n>0$ then $g_j\in \Sigma_{e_j}$ for $1\leq j\leq n$ and $g_{i+1}\neq 1_{r(e_{i+1})}$ whenever $e_i=\bar{e}_{i+1}$. Note that there is no restriction for $g_{n+1}\in G_{s(e_n)}$.
	\item A $\CG$-\textit{path} is a reduced $\CG$-word of the form $g_1=1$ or $g_1e_1g_2e_2\dots g_ne_n$.
	\end{enumerate}
 \end{defn}

\begin{defn}\cite[Definition 2.6, 2.7]{B-M-P-S-T}
Let $\CG=(\Gamma, G)$ be a graph of groups. For $v, w\in V$, define $\pi[u, w]\subset \pi(\CG)$ to be the set of images in $\pi(\CG)$ of the $\CG$-words with the range $v$ and the source $w$. In the case that $v=w$, we write $\pi_1(\CG, v)$ for $\pi[v, v]$, which is a subgroup of $\pi(\CG)$. We call $\pi_1(\CG, v)$ the \textit{fundamental group} of $\CG$ based at $v$.
\end{defn}

Note that by using relations (R1) and (R2), the image of any $\CG$-word in $\pi[v, w]$ can be represented by a reduced $\CG$-word. In particular, a typical element in the fundamental group $\pi_1(\CG, v)$ is represented by a reduced $\CG$-word with the source and range $v$.

\begin{defn}\cite[Definition 2.13]{B-M-P-S-T}
Let $\Gamma=(V, E)$ be a graph and $\CG=(\Gamma, G)$ a graph of groups with a base vertex $v\in V$. The \textit{Bass-Serre tree} $X_{\CG, v}$ of $\CG$ has vertex set 
\[X^0_{\CG, v}=\bigsqcup_{w\in V}\pi[v, w]/G_w=\{\gamma G_w: \gamma\in \pi[v, w], w\in V\}.\]
Then there is an edge between vertexes $\gamma G_w$ and $\gamma'G_{w'}$ if $\gamma^{-1}\gamma'\in G_weG_{w'}$ for some $e\in E$ with $r(e)=w$ and $s(e)=w'$.
\end{defn}

It was proved by \cite[Theorem 1.17]{Ba} that $X_{\CG, v}$ is indeed a tree. The natural action of $\pi(\CG, v)$ on $X_{\CG, v}$ is given as follows. Let $\gamma\in \pi_1(\CG, v)=\pi[v, v]$ and $\gamma'G_w\in X^0_{\CG, v}$. One defines $\gamma\cdot \gamma'G_w=\gamma\gamma'G_w$ and this action extends to an action on the edges of $X_{\CG, v}$. We also remark that the Fundamental Theorem of Bass-Serre Theory (see \cite{Ba} and \cite[Theorem 2.16]{B-M-P-S-T}) implies that the whole process above is independent of the choice of the base vertex. Let $v$ be a base vertex in $V$ and then we choose $1_vG_v\in X^0_{\CG, v}$ as our base vertex of $X_{\CG, v}$.

In general, for a tree $T$ with a base vertex $x_0$. By definition, the visual boundary $\partial T$ is exactly the set of infinite branches starting from $x_0$, equipped with cone topology generated by \textit{cylinder sets} of the form $Z(\mu)$. Here, $Z(\mu)$ is the set that consists all infinite branches with a common initial segment $\mu$, where $\mu$ is a finite path starting from $x_0$. One can easily verify that under the cone topology, $\partial T$ is a compact Hausdorff totally disconnected space.

In particular, we denoted by $v\partial X_\CG$ the visual boundary of  $X_{\CG, v}$ with respect to the base vertex $1_vG_v$. We also write $\partial X_\CG=\bigsqcup_{v\in X^0}v\partial X_\CG$ the union  of all boundaries defined from all base vertices of $V$. As one may observe, each vertex of $X_{\CG, v}$ has a unique representative of a $\CG$-path $g_1e_1\dots g_ne_n$ where $r(e_1)=v$. There is an edge between two vertices $\gamma G_v$ and $\gamma'G_{v'}$ if and only if the representative of one of these two vertices extends another with length plus one. See more in \cite[Definition 2.13]{B-M-P-S-T} and \cite[Remark 1.18]{Ba}. From this identification, one may view the visual boundary of $X_{\CG, v}$ by all infinite reduced $\CG$-words with range $v$, i.e., the infinite sequences $g_1e_1g_2e_2\dots$ such that each initial finite subsequence $g_1e_1\dots g_ne_n$ is a reduced $\CG$-word. In addition, the natural induced action of $\pi_1(\CG, v)$ on $v\partial X_\CG$ by homeomorphism can be described in a symbolical way. Let $\gamma=[g_1e_1\dots g_ne_ng_{n+1}]\in \pi_1(\CG, v)$ in which $g_1e_1\dots g_ne_ng_{n+1}$ is a reduced $\CG$-word and $r(e_1)=s(e_{n+1})=v$, and an infinite reduced words $\eta=h_1f_1h_2f_2\dots\in v\partial X_{\CG}$, then one has that  $\gamma\cdot \eta$ is exactly the infinite reduced words uniquely determined by $g_1e_1\dots g_ne_ng_{n+1}h_1f_1h_2f_2\dots$ by doing reduction by using relation (R1) and (R2) even possibly infinite times.  See more in \cite[Section 2.3.1]{B-M-P-S-T}. Now we need the following key concept.

\begin{defn}\cite[Definition 5.14]{B-M-P-S-T}\label{defn: repeatable}
We say a $G$-path $g_1e_1\dots g_ne_n$ is \textit{repeatable} if $r(e_1)=s(e_n)$ and $g_1e_1\neq 1_{r(\bar{e}_n)}\bar{e}_n$. 
\end{defn}

Let $\mu=g_1e_1\dots g_ne_n$ be a repeatable path. Then denote by $\mu^m$ the concatenation of $\mu$ by itself for $m$ times. Note that the repeatability of $\mu$ implies that $\mu^m$ is a reduced word. We also allow $m=\infty$, in which case, $\mu^\infty$ is an infinite reduced  words located in the boundary $v\partial X_{\CG}$.

\begin{prop}\label{prop: repeatable path imply 2-filling}
Let $\Gamma=(V, E)$ be a graph and $\CG=(\Gamma, G)$ a graph of groups. Suppose $v\partial X_{\CG}$ is a infinite set and there is a repeatable word $\mu=g_1e_1\dots g_ne_n$ with $|\Sigma_{\bar{e}_n}|\geq 2$ and the natural action $\beta: \pi(\CG, v)\curvearrowright v\partial X_{\CG}$ is minimal. Then $\beta$ is $2$-filling and thus a strong boundary action.
\end{prop}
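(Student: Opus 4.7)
The plan is to exhibit an element of $\pi_1(\CG, v)$ that performs north-south dynamics on $v\partial X_{\CG}$, and then invoke the argument used in the proof of Proposition \ref{prop: north-south visual} directly to deduce $2$-filling from the minimality of $\beta$. The natural candidate is $\mu$ itself, viewed as an element of $\pi_1(\CG, v)$ (repeatability forces $r(e_1) = s(e_n) = v$, so $\mu$ is indeed a loop at $v$). The condition $g_1 e_1 \neq 1\bar{e}_n$ from Definition \ref{defn: repeatable} ensures that every juxtaposition $\mu^m = g_1 e_1 \dots g_n e_n \cdots g_1 e_1 \dots g_n e_n$ is again a reduced $\CG$-word, so that the formal limit $\mu^{\infty} := g_1 e_1 g_2 e_2 \dots g_n e_n g_1 e_1 \dots \in v\partial X_{\CG}$ is well defined and will serve as the attracting fixed point. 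Symmetrically, formally inverting $\mu$ and iterating yields a point $\mu^{-\infty} \in v\partial X_{\CG}$ playing the role of the repelling fixed point; geometrically these are the two endpoints of the translation axis of $\mu$ acting on the Bass--Serre tree $X_{\CG, v}$.

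The key technical step is to verify the north-south behavior combinatorially using the symbolic description of the action recalled before Definition \ref{defn: repeatable}. Given $\eta = h_1 f_1 h_2 f_2 \dots \in v\partial X_{\CG}$, the product $\mu^m \cdot \eta$ is computed by juxtaposing and applying (R1) and (R2); since $\mu^m$ is already reduced, any cancellation must originate at the seam between $g_n e_n$ of the final copy of $\mu$ and $h_1 f_1$, and by induction it can propagate through $k$ copies of $\mu$ only when the first $k$ ``syllables'' of $\eta$ match those of $\mu^{-\infty}$. Consequently, for each $m$ the set $V_m$ of those $\eta$ that $\mu^m$ fails to send into the cylinder $Z(\mu^m)$ is a basic open neighborhood of $\mu^{-\infty}$, and the nested families $\{Z(\mu^m)\}_m$ and $\{V_m\}_m$ shrink to $\{\mu^\infty\}$ and $\{\mu^{-\infty}\}$, respectively, in the cone topology. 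The hypothesis $|\Sigma_{\bar{e}_n}| \geq 2$ enters precisely here: it guarantees that the Bass--Serre tree branches at $v$ in the direction of $\bar{e}_n$, so that $\mu^\infty \neq \mu^{-\infty}$ and each $V_m$ is a proper subset of $v\partial X_{\CG}$; without this branching, the two ``poles'' would degenerate. Combining the two estimates yields the desired north-south property: for any open $U \ni \mu^\infty$ and $V \ni \mu^{-\infty}$ one can choose $m$ with $\mu^m(v\partial X_{\CG} \setminus V) \subset U$.

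With north-south dynamics in hand, the $2$-filling property follows verbatim from the proof of Proposition \ref{prop: north-south visual}: given non-empty open $O_1, O_2 \subset v\partial X_{\CG}$, minimality of $\beta$ yields $\gamma_1, \gamma_2 \in \pi_1(\CG, v)$ and open neighborhoods $U \ni \mu^\infty$, $V \ni \mu^{-\infty}$ with $\gamma_1 V \subset O_1$ and $\gamma_2 U \subset O_2$; choosing $m$ so that $\mu^m(v\partial X_{\CG} \setminus V) \subset U$ then gives $v\partial X_{\CG} = \gamma_1^{-1} O_1 \cup (\gamma_2 \mu^m)^{-1} O_2$. Since $2$-filling is equivalent to being a strong boundary action by Definition \ref{defn: strong boundary} and the remark following Definition \ref{defn: n-filling}, this finishes the argument. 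The main obstacle I anticipate is the symbolic bookkeeping in the middle paragraph: carefully tracking how successive applications of (R1) and (R2) cascade through the long product $\mu^m \cdot \eta$, and pinning down precisely why $|\Sigma_{\bar{e}_n}| \geq 2$ (rather than merely $\geq 1$) is what keeps $V_m$ proper.
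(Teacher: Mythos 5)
Your route is genuinely different from the paper's, and it is viable. The paper never establishes north--south dynamics for $\mu$: instead it partitions $v\partial X_{\CG}=A\sqcup B$ with $B=Z(1_{r(\bar{e}_n)}\bar{e}_n)$ and $A$ the union of all the other length-one cylinders, checks by hand that $\mu^m\cdot A\subset Z(\mu^m)$ and that $\mu^m t\cdot B\subset Z(\mu^m)$ for a fixed $t\in\Sigma_{\bar{e}_n}\setminus\{1_{r(\bar{e}_n)}\}$, and then uses minimality to push $Z(\mu^m)$ into $O_1$ and $O_2$. That is where $|\Sigma_{\bar{e}_n}|\geq 2$ is actually used: $B$ contains the repelling end $\mu^{-\infty}$ (whose reduced form begins with $1\bar{e}_n$), so $\mu^m$ alone does not carry $B$ into $Z(\mu^m)$, and $t$ is the twist that moves $B$ off the repelling direction. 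Your explanation of the hypothesis is therefore off the mark: repeatability alone already makes $\mu$ a hyperbolic isometry of the locally finite Bass--Serre tree with a genuine bi-infinite axis, so $\mu^{\infty}\neq\mu^{-\infty}$ automatically. In your route the hypothesis $|\Sigma_{\bar{e}_n}|\geq 2$ is simply never needed, because minimality lets you shrink the excluded neighborhood $V$ of $\mu^{-\infty}$ rather than work with the fixed cylinder $B$; this means your argument, once repaired, proves a formally stronger statement.

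The one concrete error is in the middle paragraph. The set $V_m=\{\eta:\mu^m\eta\notin Z(\mu^m)\}$ does \emph{not} shrink to $\{\mu^{-\infty}\}$: for any $\eta=1\bar{e}_n h_2f_2\dots$ whose cancellation against $\mu^m$ stops after one step, $\mu^m\eta$ lies in $Z(\mu^{m-1})\setminus Z(\mu^m)$ for every $m$, so such $\eta$ belongs to $\bigcap_m V_m$ while being far from $\mu^{-\infty}$. The correct statement fixes the target first: given $U\supset Z(\mu^{m_0})$ and a cylinder $Z(\nu_k)\subset V$ around the length-$k$ initial segment $\nu_k$ of $\mu^{-\infty}$, any $\eta\notin Z(\nu_k)$ has its geodesic from $1_vG_v$ branching off the axis of $\mu$ at distance less than $k$ in the repelling direction, so the cancellation in $\mu^m\eta$ consumes at most $k$ letters and $\mu^m\eta\in Z(\mu^{m-\lceil k/n\rceil})\subset Z(\mu^{m_0})$ once $m\geq m_0+\lceil k/n\rceil$. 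With that correction (or by simply invoking the standard fact that a hyperbolic isometry of a locally finite tree acts with north--south dynamics on its visual boundary), your last paragraph is exactly the proof of Proposition \ref{prop: north-south visual} and the argument closes.
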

\begin{proof}
Let $\mu=g_1e_1\dots g_ne_n$ be the repeatable path. Then let $v=r(e_1)=s(e_n)$ be our base vertex and consider the corresponding fundamental group $\pi_1(\CG, v)$ and the Bass-Serre tree $X_{\CG, v}$, together with the visual boundary $v\partial X_{\CG}$.  Note  that by definition $\mu^m\in \pi_1(\CG, v)$ for any $m\in \N$.

Let $m\in \N$. For any $\CG$-word $sf$ with length $1$ such that $f\in E$ with $r(f)=v$ and $s\in \Sigma_{f}$.  If $f\neq \bar{e}_n$, then for any $\eta\in Z(sf)$, one has that the concatenation ${\mu^m}^{\smallfrown}\eta$ is still an infinite reduced word, whence $\mu^m\cdot Z(sf)\subset Z(\mu^m)$. Define \[A_1=\bigsqcup_{f\in E, r(f)=v, f\neq \bar{e}_n}\bigsqcup_{s\in \Sigma_f}Z(sf)\]
and one actually has $\mu^m\cdot A_1\subset Z(\mu^m)$. Now suppose $f=\bar{e}_n$. Then for any $s\neq 1_{r(\bar{e}_n)}$ and any $\xi\in Z(s\bar{e}_n)$, the concatenation ${\mu^m}^{\smallfrown}\xi$ is still an infinite reduced word. Define 
\[A_2=\bigsqcup_{s\in \Sigma_{\bar{e}_n}\setminus \{1_{r(\bar{e}_n)}\}}Z(s\bar{e}_n)\]
and one has $\mu^m\cdot A_2\subset Z(\mu^m)$. Define $A=A_1\sqcup A_2$. Finally, for the case $B=Z(1_{r(\bar{e}_n)}\bar{e}_n)$, since $|\Sigma_{\bar{e}_n}|\geq 2$, one can choose a $t\in \Sigma_{\bar{e}_n}\setminus \{1_{r(\bar{e}_n)}\}\subset G_{r(\bar{e}_n)}=G_{s(e_n)}$. This shows that $\mu^mt$ is still an group element in $\pi_1(\CG, v)$. Now for any $\zeta\in Z(1_{r(\bar{e}_n)}\bar{e}_n)$, which is of the form $\zeta=1_{r(\bar{e}_n)}\bar{e}_n^{\smallfrown}\rho$ and thus $\mu^mt\cdot \zeta={\mu^m}^{\smallfrown}t\bar{e}_n^{\smallfrown}\rho$, which is a infinite reduced word in $Z(\mu^m)$. This implies that $\mu^mt\cdot B\subset Z(\mu^m)$.

Now observe that $\{Z(\mu^m): m\in \N\}$ forms a neighborhood basis of $\mu^\infty$. For any non-empty open sets $O_1, O_2$ in $v\partial X_\CG$, since the action $\beta$ is minimal, there are $\gamma_1, \gamma_2\in \pi_1(\CG, v)$ and an $m\in \N$ such that $\gamma_i Z(\mu^m)\subset O_i$ for $i=1, 2$.  Now define group elements $h_1=\gamma_1\mu^m$ and $h_2=\gamma_2\mu^mt$ and observe that $h_1A\subset O_1$ and $h_2B\subset O_2$. This implies that the whole boundary $v\partial X_\CG=A\sqcup B\subset h_1^{-1}O_1\cup h_2^{-1}O_2$ and thus $\beta$ is $2$-filling.
\end{proof}

On the other hand, A characterization of minimality of the action $\pi(\CG, v)\curvearrowright v\partial X_{\CG}$ was proved in \cite{B-M-P-S-T}.

\begin{defn}\cite[Definition 5.3]{B-M-P-S-T}\label{defn:flowness}
	Let $\Gamma=(V, E)$ be a graph and $\CG=(\Gamma, G)$ a graph of groups. Let $e, f\in E$. We say $f$ \textit{can flow to} $e$ if $f$ occurs in a infinite reduced words $\xi\in Z(1_{r(e)}e)$ and $f$ is not the rangemost edge of $\xi$.  We say a boundary point $\xi\in \partial X_{\CG}$ can flow to $e$ if there is an $f$ occurs in $\xi$ can flow to $e$.
\end{defn}
See more in \cite[Lemma 5.4]{B-M-P-S-T} for an elementary and more explicit description of Definition \ref{defn:flowness}. Then we record the following theorem on minimality.

\begin{thm}\cite[Theorem 5.5]{B-M-P-S-T}\label{thm: minimal on tree}
	The action  $\beta: \pi(\CG, v)\curvearrowright v\partial X_{\CG}$ is minimal if and only if $\xi$ can flow to $e$ for any $\xi\in \partial X_{\CG}$ and $e\in E$.
\end{thm}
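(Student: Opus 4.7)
My plan is to reduce both directions of the equivalence to the observation that the action of $\gamma \in \pi_1(\CG, v)$ on an infinite reduced word $\xi$ is given by concatenation followed by reduction via relations (R1)-(R2), so that $\gamma \cdot \xi$ agrees with $\xi$ past a finite initial segment. Combined with the fact that the cylinder sets $Z(\mu)$ indexed by finite reduced paths $\mu$ from $v$ form a basis for the topology on $v\partial X_\CG$, minimality becomes a purely combinatorial statement about splicing reduced words.

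For the direction flow $\Rightarrow$ minimality, I would fix $\xi = h_1 f_1 h_2 f_2 \cdots \in v\partial X_\CG$ and a cylinder $Z(\mu)$ with $\mu = g_1 e_1 \cdots g_n e_n$ a reduced $\CG$-path with $r(e_1)=v$, and construct $\gamma \in \pi_1(\CG,v)$ with $\gamma\xi \in Z(\mu)$. The natural candidate is $\gamma = \mu\,\sigma^{-1}$ for a carefully chosen prefix $\sigma = h_1 f_1 \cdots h_k f_k h_{k+1}$ of $\xi$: then telescoping gives $\gamma\xi = \mu \cdot (h_{k+2} f_{k+2}\cdots)$. For this to lie in $\pi_1(\CG, v)$ and to actually display the prefix $\mu$ after reduction, I need $s(\sigma)=s(e_n)$, no cancellation at the junction of $\mu$ and $\sigma^{-1}$ (so $f_k \ne e_n$ or an intervening nontrivial group element), and no cancellation between the last edge $e_n$ of $\mu$ and the first surviving edge $f_{k+2}$ of the tail. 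Applying the flow hypothesis to the edge $\bar{e}_n$ (with appropriate choice of transversal representative) produces precisely a $k$ satisfying these constraints, because $\xi$ flowing to $\bar{e}_n$ encodes exactly the existence of an edge of $\xi$ that can be connected to $\bar{e}_n$ by a reduced word of the required shape.

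For the converse, I would fix $\xi \in \partial X_\CG$, say $\xi \in w\partial X_\CG$, and $e \in E$. Using the Fundamental Theorem of Bass-Serre Theory to identify the action of $\pi_1(\CG, v)$ on $v\partial X_\CG$ with that of $\pi_1(\CG,w)$ on $w\partial X_\CG$ (the minimality property transports across change of base vertex), I may assume $w = r(e)$. Minimality then yields $\gamma \in \pi_1(\CG, r(e))$ with $\gamma\xi \in Z(1_{r(e)}\,e)$. Since $\gamma\xi$ is obtained from $\gamma\cdot\xi$ by reducing a finite prefix, all but finitely many edges of $\xi$ persist inside $\gamma\xi$; pick any such edge $f$ of $\xi$ appearing after the leading $e$ in $\gamma\xi$. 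Then $f$ witnesses that $\xi$ flows to $e$ in the sense of Definition~\ref{defn:flowness}.

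The main technical obstacle will be the bookkeeping in the splicing argument: ensuring that the prefix $\sigma$ selected by the flow property conforms to the transversal convention $g_j \in \Sigma_{e_j}$ from Definition~\ref{defn: G-words}, and that all reductions via (R1)-(R2) leave the intended $\mu$-prefix intact. I expect the elementary reformulation of "flow" given in Lemma 5.4 of \cite{B-M-P-S-T} to be decisive here, replacing the abstract quantifier over elements of $Z(1_{r(e)}e)$ with a concrete compatibility condition on successive edges that can be matched directly against the syntactic reduction rules.
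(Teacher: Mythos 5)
First, note that the paper does not prove this statement itself; it is imported verbatim as \cite[Theorem 5.5]{B-M-P-S-T}, so your proposal can only be measured against the intended argument rather than a proof in the text. Your converse direction is essentially right: transporting $\xi$ to the base vertex $r(e)$, applying minimality to land in $Z(1_{r(e)}e)$, and observing that the reduction of $\gamma\cdot\xi$ only disturbs a finite prefix (so a common tail of $\xi$ survives at non-rangemost positions) is exactly the correct mechanism, modulo the small check that the transported word shares its tail edges with $\xi$.

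The forward direction, however, has a genuine gap, in two places. First, the candidate $\gamma=\mu\sigma^{-1}$ with $\sigma$ a prefix of $\xi$ cannot work in general: for $\mu\sigma^{-1}$ to lie in $\pi_1(\CG,v)$ and for the telescoping to produce a word beginning with $\mu$, the tail of $\xi$ after $\sigma$ must start at the vertex $s(e_n)$ of $\Gamma$ with an edge that does not cancel against $e_n$. Nothing forces $\xi$ ever to traverse such an edge --- $\xi$ may avoid the vertex $s(e_n)$ of $\Gamma$ entirely. What the flow hypothesis actually supplies is a \emph{connector}: a finite reduced path $\tau$ starting at $s(e_n)$ with a prescribed first edge and ending with an edge $f_{k+1}$ of $\xi$, so the correct element is $\gamma=\mu\tau\,\sigma^{-1}$ with $\sigma=h_1f_1\cdots h_{k+1}f_{k+1}$; your prose acknowledges the connecting word but your formula omits it. Second, and more seriously, applying the flow hypothesis to $\bar{e}_n$ is precisely the wrong choice: the resulting connector begins with $1_{s(e_n)}\bar{e}_n$, and relation (R2) then cancels the terminal $e_n$ of $\mu$ against this $\bar{e}_n$, destroying the $\mu$-prefix you are trying to create. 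Your parenthetical ``appropriate choice of transversal representative'' does not repair this when $|\Sigma_{\bar{e}_n}|=1$, since there is then no nontrivial representative to substitute. The actual fix is a case distinction: flow to some edge $e'$ with $r(e')=s(e_n)$ and $e'\neq\bar{e}_n$ if one exists; otherwise $r^{-1}(s(e_n))=\{\bar{e}_n\}$ and the standing non-singularity hypothesis guarantees $|\Sigma_{\bar{e}_n}|\geq 2$, so the leading $1$ can be replaced by a nontrivial element of $\Sigma_{\bar{e}_n}$ without affecting reducedness. This is exactly where non-singularity enters the theorem, and your proposal never engages with it; as written, the splicing step fails whenever $\alpha_{\bar{e}_n}$ is surjective.
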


Combining these result, we have the following result immediately.

\begin{thm}\label{thm: boundary action}
	Let $\Gamma=(V, E)$ be a graph and $\CG=(\Gamma, G)$ a graph of groups. Suppose 
	\begin{enumerate}
	\item $v\partial X_{\CG}$ is infinite;
	\item $\xi$ can flow to $e$ for any $\xi\in \partial X_{\CG}$ and $e\in E$; and
	\item there is a repeatable path $\mu=g_1e_1\dots g_ne_n$ with $|\Sigma_{\bar{e}_n}|\geq 2$.
	\end{enumerate}
Then the natural action $\beta: \pi(\CG, v)\curvearrowright v\partial X_{\CG}$ is a strong boundary action. In particular, $\beta$ is a $\pi_1(\CG, v)$-boundary action. If, in addition, each $G_e$  is amenable and $\pi_1(\CG, v)$ is $C^*$-simple, then the action $\beta$ is topologically free and thus the crossed product $C(v\partial X_{\CG})\rtimes_r \pi_1(\CG, v)$ is a unital Kirchberg algebra satisfying the UCT.
\end{thm}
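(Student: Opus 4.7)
The plan is to assemble several ingredients already developed in Sections~3 and~5. First I would verify that $\beta$ is a strong boundary action: hypothesis~(2) together with Theorem~\ref{thm: minimal on tree} gives minimality, and then Proposition~\ref{prop: repeatable path imply 2-filling}, fed minimality together with hypotheses~(1) and~(3), yields that $\beta$ is $2$-filling, which is exactly the definition of a strong boundary action (Definition~\ref{defn: strong boundary}). Since strong boundary actions are strongly proximal and minimal, this also delivers the $\pi_1(\CG,v)$-boundary conclusion (cf.\ the remark of Glasner recalled just after Definition~\ref{defn: strong boundary}).

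For topological freeness under the additional hypotheses, I would invoke \cite[Proposition~1.9]{B-K-K-O}: as $\pi_1(\CG,v)$ is $C^*$-simple and $\beta$ is a boundary action, it suffices to show that every stabilizer $\stab_{\pi_1(\CG,v)}(\xi)$ with $\xi\in v\partial X_\CG$ is amenable. The argument here is purely tree-theoretic. An end $\xi$ of the Bass--Serre tree $X_{\CG,v}$ is represented by a geodesic ray $(x_n)_{n\geq 0}$, and $g\in \stab(\xi)$ if and only if $g$ pointwise fixes some tail $(x_n)_{n\geq N}$. Writing $K_N$ for that pointwise fixator, one has $K_0\subseteq K_1\subseteq\cdots$ with $\stab(\xi)=\bigcup_N K_N$; each $K_N$ is contained in the stabilizer of the edge $[x_N,x_{N+1}]$, which is a conjugate of some $\alpha_e(G_e)$ and hence amenable by hypothesis. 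Since directed unions of amenable groups are amenable, $\stab(\xi)$ is amenable.

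With minimality, topological freeness, and pure infiniteness in the sense of Definition~\ref{defn: comparison-pure infiniteness} (the last following from $2$-filling, together with the implication $(iii){\Rightarrow}(ii)$ of Theorem~\ref{thm: pure infiniteness of action} in the minimal case), Theorem~\ref{thm: pure inf main}(1) delivers that $A := C(v\partial X_\CG)\rtimes_r \pi_1(\CG,v)$ is unital, simple, separable, and (strongly) purely infinite. For nuclearity, I would use that the induced action on the visual boundary of a tree inherits topological amenability from amenability of the edge stabilizers of the tree action (a classical consequence of the Adams/Anantharaman-Delaroche framework for boundary actions of tree-acting groups); combined with the amenable-stabilizer computation above this shows $\beta$ is topologically amenable, whence $A$ is nuclear and thus a unital Kirchberg algebra. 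The UCT then follows from \cite{Tu}.

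The main obstacle I anticipate is articulating the topological amenability step cleanly, since amenability of all point stabilizers is strictly weaker than topological amenability of a group action in general, and one must genuinely use the tree structure. I would expect to produce the required sequence of almost equivariant Borel probability measures by pushing forward convex combinations of Dirac masses along the geodesic segments running from a fixed basepoint toward each boundary point, in the style of the standard construction for boundary amenability of groups acting on trees with amenable edge stabilizers.
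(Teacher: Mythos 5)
Your overall architecture matches the paper's: minimality from hypothesis (2) via Theorem \ref{thm: minimal on tree}, then Proposition \ref{prop: repeatable path imply 2-filling} for the $2$-filling/strong boundary conclusion, topological amenability of $\beta$ from amenability of the edge (hence vertex) groups, and Theorem \ref{thm: pure inf main} plus \cite{Tu} at the end. Two remarks on where you diverge. For topological amenability you propose to build the almost-invariant measures by hand; the paper simply cites \cite[Proposition 5.2.1]{B-O} (amenable vertex stabilizers for an action on a countable tree imply the boundary action is amenable), using that each $G_v$ is amenable because $\alpha_e(G_e)\leq G_{r(e)}$ has finite index under the standing local finiteness assumption of this subsection. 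For topological freeness the paper takes a different route from yours: it observes that the reduced crossed product of a boundary action of a $C^*$-simple group is simple, and nuclear by amenability, and then invokes Archbold--Spielberg \cite{A-S} to deduce topological freeness; your route via \cite[Proposition 1.9]{B-K-K-O} together with amenability of point stabilizers is also legitimate and is exactly the route the paper uses in its Nevo--Sageev boundary results, so it is an acceptable substitute here.

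However, your justification of the amenable-stabilizer step contains a genuine error. It is not true that $g\in\stab(\xi)$ if and only if $g$ pointwise fixes a tail of a geodesic ray representing $\xi$: a hyperbolic element whose axis points toward $\xi$ stabilizes the end without fixing any vertex (e.g.\ the generator $a$ in $\Z\ast\Z$ acting on its Bass--Serre tree stabilizes the end $aaa\cdots$). Hence $\stab(\xi)$ is strictly larger than $\bigcup_N K_N$ in general, and your directed-union argument only shows that the subgroup of \emph{elliptic} elements of $\stab(\xi)$ is amenable. The standard repair is to use the Busemann (translation-length) homomorphism $b:\stab(\xi)\to\Z$, whose kernel is precisely the set of elliptic elements fixing a tail, i.e.\ $\bigcup_N K_N$; then $\stab(\xi)$ is an extension of a subgroup of $\Z$ by an amenable group and is therefore amenable. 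With that correction your proof goes through; alternatively you can bypass point stabilizers entirely by following the paper's Archbold--Spielberg argument.
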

\begin{proof}
By assumptions, observe that $\beta$ is a strong boundary action by Proposition \ref{prop: repeatable path imply 2-filling}  and thus a $\pi_1(\CG, v)$-boundary action. Now if all $G_e$ are amenable groups, then so are all $G_v$ because $[G_{r(e)}, \alpha_e(G_e)]$ is finite for all $e\in E$. Therefore, the action $\beta$ is amenable by \cite[Proposition 5.2.1]{B-O}. Now in the case that  $\pi_1(\CG, v)$ is $C^*$-simple, the crossed product $C(v\partial X_{\CG})\rtimes_r \pi_1(\CG, v)$ is simple and nuclear. Therefore, $\beta$ is topologically free by \cite{A-S}. Then Theorem \ref{thm: pure inf main} implies that $C(v\partial X_{\CG})\rtimes_r \pi_1(\CG, v)$ is purely infinite and thus a unital Kirchberg algebra satisfying the UCT by the classical result of Tu in \cite{Tu}.
\end{proof}

\begin{eg}\label{eg: new1}
Consider the classical case that $\Gamma=(V, E)$ such that $V=\{u, v\}$ and $E=\{e\}$ with $e=(u, v)$. Now define $n_f=|\Sigma_f|=[G_{r(f)}: \alpha_f(G_{f})]$ for $f=e$ or $\bar{e}$. In the \textit{non-degenerated} case, i.e., $(n_e-1)(n_{\bar{e}}-1)\geq 2$, observe that the graph of groups $\CG$ satisfies Theorem \ref{thm: boundary action}. 

Indeed, first, since $n_e, n_{\bar{e}}\geq 2$, one can choose $g, h\neq 1$ such that the path $geh\bar{e}$ is repeatable. For the minimality of the action, since there are only two edges, i.e., $e, \bar{e}$ in $E$, simply observe the infinite reduced words$1e1e\dots$ and $1eh\bar{e}\dots$, where $h\neq 1$ in $Z(1e)$, witness that all $\xi\in \partial X_{\CG}$ flow to $e$ because the only possible edges appeared in $\xi$ are $e$ and $\bar{e}$. The same argument shows all $\xi\in \partial X_{\CG}$ flow to $\bar{e}$ as well. Then it follows from Theorem \ref{thm: minimal on tree} that the action is minimal. On the other hand,  when $n_e$ or $n_{\bar{e}}\geq 3$, the boundary $v\partial X_{\CG}$ is infinite (see e.g., \cite[Example 2.14(E1)]{B-M-P-S-T}). Therefore, the action in Theorem \ref{thm: boundary action} for non-degenerated free product with amalgamation $\pi_1(\CG, v)$ is a strong boundary action. To observe more, if $G_e$ is amenable, then so are $G_v$ and $G_u$ because $n_e, n_{\bar{e}}$ are assumed to be finite in our locally finite setting (Definition \ref{defn: locally finite graph of groups}) and the action $\beta$ is amenable by  \cite[Proposition 5.2.1]{B-O}. Now if $\pi_1(\CG, v)$ is $C^*$-simple, (e.g. when $G_e$ is trivial by \cite[Corollary 12]{Harpe}) then $\beta$ is topologically free. In this case $\pi_1(\CG, v)$, the reduced crossed product $C(v\partial X_{\CG})\rtimes_r \pi_1(\CG, v)$ is a unital Kirchberg algebra satisfying the UCT.
\end{eg}

From now on, we focus on the generalized Baumslag-Solitar groups (GBS groups for simplicity), which can be regarded as  a fundamental group $\pi_1(\CG, v)$ for a graph of groups $\CG=(\Gamma, G)$ in which vertex groups $G_v$ and edge groups $G_e$ are isomorphic to $\Z$. In this case, we also call the graph of groups $\CG=(\Gamma, G)$ a GBS graph of groups.

Let $\CG=(\Gamma, G)$ be a locally finite non-singular GBS graph of groups in which $\Gamma=(V, E)$. Then in Definition \ref{defn: G-words}, one can choose the transversal $\Sigma_e=\{0, 1, \dots, |k_e|-1\}$ for some $k_e\in \Z$ and actually $|k_e|=[G_{r(e)}: \alpha_e(G_e)]$. For each $\CG$-word $\mu=g_1e_1\dots g_ne_ng_{n+1}$, one may assign a rational number $q(\gamma)=\prod_{i=1}^n(k_{\bar{e}_i}/k_{e_i})$ and  verify that the restriction of $q$ on $\pi_1(\CG, v)$ to $\Q^{\times}$ is a group homomorphism.  The graph of groups $\CG$ is called \textit{unimodular} if $|q(\gamma)|=1$ for any $\CG$-word $\gamma$ with $s(\gamma)=r(\gamma)$.

It was also provided in \cite[Theorem 7.5]{B-M-P-S-T} a characterization of when the natural action $\beta: \pi(\CG, v)\curvearrowright v\partial X_{\CG}$ is topologically free. However, if we restrict to finite graphs, one actually has a very nice characterization.

\begin{prop}\cite[Corollary 7.11]{B-M-P-S-T}\label{prop: GBS topological freeness}
Let $\CG=(\Gamma, G)$ be a GBS graph of groups in which $\Gamma=(V, E)$ is a finite graph. Then the natural action $\beta: \pi(\CG, v)\curvearrowright v\partial X_{\CG}$ is topologically free if and only if $\CG$ is not unimodular.
\end{prop}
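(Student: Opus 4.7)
The plan is to split the analysis of the fixed-point set $\mathrm{Fix}(\gamma)\subset v\partial X_{\CG}$ for each nontrivial $\gamma\in\pi_1(\CG,v)$ according to the type of isometry induced on the Bass--Serre tree $X_{\CG,v}$. If $\gamma$ is hyperbolic it translates along an axis whose two endpoints are the only boundary points it fixes, so $\mathrm{Fix}(\gamma)$ is automatically nowhere dense. Hence the substance is the elliptic case, in which, after conjugation inside $\pi_1(\CG,v)$, I may assume $\gamma=t_w^{k}$ with $k\neq 0$ for a generator $t_w$ of the vertex group $G_w\cong\Z$.

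For an infinite reduced word $\xi=1_{w}e_1 h_1 e_2 h_2\cdots$ starting at $w$, repeatedly applying relation~(R2) together with the defining relation $\alpha_e(1_e)=k_e\cdot 1_{r(e)}$ rewrites $\gamma\cdot\xi$ as $e_1 t_{s(e_1)}^{k\cdot k_{\bar{e}_1}/k_{e_1}} h_1 e_2 \cdots$, which forces both the divisibility $k_{e_1}\mid k$ and the requirement that the new exponent continue to fix the tail. Iterating, $\gamma\xi=\xi$ is equivalent to the infinite tower of integrality conditions $k\cdot q(e_1\cdots e_n)\in\Z$ for every $n$; abelianness of each $G_{s(e_n)}$ lets the letters $h_i$ slide through harmlessly. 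Thus the elliptic fixed set is entirely controlled by the arithmetic of the modular cocycle $q$ along the prefixes of $\xi$.

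For the direction unimodular $\Rightarrow$ not topologically free, I exploit $|q|\equiv 1$ on closed loops to define a well-defined modular function $\phi\colon V\to\Q_{>0}$ with $\phi(s(e))/\phi(r(e))=|k_{\bar e}|/|k_e|$ for every edge $e$. Because $V$ is finite, the denominators of the rationals $\phi(w')/\phi(w)$ are bounded by some integer $N$, and every value $q(e_1\cdots e_n)$ along a path issuing from $w$ equals $\pm\phi(s(e_n))/\phi(w)$, so its denominator divides $N$. Choosing any nonzero $k$ divisible by $N$ and by all coefficients $k_e$ with $r(e)=w$ produces a nontrivial element $t_w^{k}$ whose fixed set contains \emph{every} boundary point issuing from $w$; conjugating by a path from $v$ to $w$ gives a nontrivial element of $\pi_1(\CG,v)$ fixing an entire open set of $v\partial X_{\CG}$.

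For the direction non-unimodular $\Rightarrow$ topologically free, fix a loop $\lambda$ with $|q(\lambda)|\neq 1$; replacing $\lambda$ by $\bar\lambda$ if necessary I may arrange $|q(\lambda)|<1$, so that the denominator of $q(\lambda^{m})$ grows without bound in $m$. For any nontrivial $\gamma=t_w^{k}$ and any cylinder $Z(\nu)\subset v\partial X_{\CG}$, I construct an infinite reduced word inside $Z(\nu)$ by first appending to $\nu$ a reduced path reaching the base vertex of $\lambda$ and then iterating $\lambda$; eventually the denominator of $k\cdot q(\text{prefix})$ exceeds $|k|$, violating the integrality requirement, so this point is not fixed by $\gamma$. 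Hence $\mathrm{Fix}(\gamma)$ has empty interior, and topological freeness follows. The main obstacle is arranging the appended tail to be a genuine reduced continuation of $\nu$ at every stage: the non-singular and locally finite hypotheses on $\CG$, together with the connectivity of $\Gamma$, guarantee that at each step a valid outgoing edge is available so the extension process never gets stuck.
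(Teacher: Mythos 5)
This proposition is quoted verbatim from \cite[Corollary 7.11]{B-M-P-S-T}; the paper gives no proof of its own, so there is nothing internal to compare against. Your route --- the elliptic/hyperbolic dichotomy, conjugating an elliptic element into a vertex group $G_w\cong\Z$, translating ``$t_w^k$ fixes the end $\xi=g_1e_1g_2e_2\cdots$'' into the tower of divisibility conditions $k_{e_n}\mid k\cdot q(e_1\cdots e_{n-1})$, and then playing unimodularity against the unbounded denominators of $q(\lambda^m)$ --- is exactly the standard argument for that corollary, and the skeleton is right.

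Three places need repair. First, in the unimodular direction your choice of $k$ (divisible by $N$ and by the $k_e$ with $r(e)=w$) only secures the divisibility condition at the first letter; at stage $n$ you need $k_{e_n}\mid k\,q(e_1\cdots e_{n-1})$ for an arbitrary edge $e_n$ of $\Gamma$. A unimodular graph with an index-$2$ edge at $w$ and an index-$3$ edge elsewhere defeats $k=2$. The fix is harmless because $E$ is finite: take $k$ divisible by $N\cdot\operatorname{lcm}_{e\in E}|k_e|$, so that $k\,q(\text{prefix})$ is always an integer multiple of that lcm. Second, ``hyperbolic $\Rightarrow\operatorname{Fix}(\gamma)$ nowhere dense'' is not automatic: a two-point set fails to be nowhere dense precisely when one of the points is isolated. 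You must show that in the non-unimodular case $v\partial X_{\CG}$ is perfect; this holds because non-unimodularity forces the Bass--Serre tree not to be a line, while non-singularity gives every vertex degree at least $2$ and finiteness of $\Gamma$ then rules out isolated ends --- but this is an argument, not a triviality, and it is exactly where the non-singularity hypothesis enters. Third, the ``extension never gets stuck'' step is the technical heart of the forward direction and is asserted rather than proved: you need that from the terminal vertex of an arbitrary reduced $\nu$ there is a reduced continuation reaching a vertex over the base of $\lambda$, and that $\lambda$ can be arranged to be repeatable (in the sense of Definition \ref{defn: repeatable}) so that the lifts of $\lambda^m$ concatenate reducedly for all $m$. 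That every vertex type occurs in every half-tree of the Bass--Serre tree uses connectedness and finiteness of $\Gamma$ together with non-singularity and should be spelled out. None of these is fatal, but as written the proof has genuine holes at exactly the points where the standing hypotheses do their work.
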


Combining Theorem \ref{thm: boundary action} and Proposition \ref{prop: GBS topological freeness}, one has the following pure infiniteness result, which also yields a new method to find $C^*$-simple GBS groups.

\begin{thm}\label{thm: main one}
Let $\CG=(\Gamma, G)$ be a  GBS graph of groups in which $\Gamma=(V, E)$ is a finite graph. 
Suppose 
\begin{enumerate}
	\item $v\partial X_{\CG}$ is infinite;
	\item $\xi$ can flow to $e$ for any $\xi\in \partial X_{\CG}$ and $e\in E$;
	\item there is a repeatable path $\mu=g_1e_1\dots g_ne_n$ with $|\Sigma_{\bar{e}_n}|\geq 2$; and
	\item $\CG$ is not unimodular.
\end{enumerate}
Then the natural action $\beta: \pi_1(\CG, v)\curvearrowright v\partial X_{\CG}$ is an topological amenable topologically free strong boundary action and the crossed product $C(v\partial X_{\CG})\rtimes_r \pi_1(\CG, v)$ is a unital Kirchberg algebra satisfying the UCT. Furthermore, $\pi_1(\CG, v)$ is $C^*$-simple.
\end{thm}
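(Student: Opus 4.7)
The plan is to combine Theorem \ref{thm: boundary action} with Proposition \ref{prop: GBS topological freeness}, being careful to \emph{derive} the $C^*$-simplicity rather than assume it (as the second half of Theorem \ref{thm: boundary action} does). Since $\CG$ is a GBS graph of groups, every vertex group $G_v$ and every edge group $G_e$ is isomorphic to $\Z$, in particular amenable. Assumptions (1)--(3) then match the hypotheses of the first conclusion of Theorem \ref{thm: boundary action}, so I would immediately obtain that $\beta\colon \pi_1(\CG,v)\curvearrowright v\partial X_\CG$ is a strong boundary action; in particular it is minimal and is a $\pi_1(\CG,v)$-boundary in the sense of Furstenberg.

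For topological freeness I would not invoke the $C^*$-simplicity route of Theorem \ref{thm: boundary action}, since that would be circular for the \emph{Furthermore} clause. Instead, I would quote Proposition \ref{prop: GBS topological freeness} directly: in the finite-graph GBS setting, non-unimodularity (assumption (4)) is exactly equivalent to topological freeness of $\beta$. Amenability of $\beta$ comes for free: $\pi_1(\CG,v)$ acts on the Bass-Serre tree $X_{\CG,v}$ with vertex stabilizers conjugate to the amenable groups $G_v\cong\Z$, and it is standard that such a tree action induces an amenable action on the tree boundary (compare \cite[Proposition 5.2.1]{B-O}).

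With $\beta$ now known to be minimal, topologically free, amenable, and purely infinite (since a strong boundary action is $2$-filling by Proposition \ref{prop: repeatable path imply 2-filling}, and $2$-filling implies purely infinite), Theorem \ref{thm: pure inf main} gives that $C(v\partial X_\CG)\rtimes_r\pi_1(\CG,v)$ is unital, simple, separable, purely infinite, and nuclear. The UCT then follows from Tu's theorem in \cite{Tu}, yielding the unital Kirchberg algebra conclusion.

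For the final assertion of $C^*$-simplicity of $\pi_1(\CG,v)$, I would argue as follows: the stabilizers of the action on the tree (and hence on its boundary) are contained in conjugates of the amenable groups $G_v\cong\Z$, so $\beta$ is a topologically free $\pi_1(\CG,v)$-boundary action with amenable point stabilizers. Applying \cite[Proposition 1.9]{B-K-K-O} in the converse direction (existence of such a boundary action characterizes $C^*$-simplicity) delivers simplicity of $C^*_r(\pi_1(\CG,v))$. The one conceptual point to watch is precisely this avoidance of circularity; Proposition \ref{prop: GBS topological freeness} is exactly what lets the argument close up without assuming what we are trying to prove.
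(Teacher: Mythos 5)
Your proof is correct and follows essentially the same route as the paper: strong boundary action from assumptions (1)--(3) via Theorem \ref{thm: boundary action}/Proposition \ref{prop: repeatable path imply 2-filling}, topological freeness from non-unimodularity via Proposition \ref{prop: GBS topological freeness}, amenability from the $\Z$ vertex groups and \cite[Proposition 5.2.1]{B-O}, and then Theorem \ref{thm: pure inf main} plus Tu's theorem for the unital Kirchberg/UCT conclusion. The only difference is a citation detail: for the final $C^*$-simplicity claim the paper invokes \cite[Theorem 1.5]{K-K} (existence of a topologically free boundary action implies $C^*$-simplicity), which is the cleaner reference for the implication you describe as reading \cite[Proposition 1.9]{B-K-K-O} ``in the converse direction.''
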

\begin{proof}
	Amenability of the action $\beta$ follows from the fact that each vertex group $G_v\simeq \Z$ and \cite[Proposition 5.2.1]{B-O}. Therefore the reduced crossed product is nuclear and thus satisfies the UCT by \cite{Tu}. it follows from Theorems \ref{thm: minimal on tree} and \ref{thm: pure inf main} as well as Proposition \ref{prop: GBS topological freeness} that $C(v\partial X_{\CG})\rtimes_r \pi_1(\CG, v)$ is a unital Kirchberg algebra satisfying the UCT.	For $C^*$-simplicity part, apply \cite[Theorem 1.5]{K-K}.
\end{proof}

One may also want to compare our Theorems \ref{thm: main one} and \ref{thm: boundary action} with results obtained in \cite{B-I-O}.  In particular, we also have the following examples satisfying Theorem \ref{thm: main one}. 

\begin{eg}\label{eg: new2}
First, we claim that Baumslag-Solitar groups (BS groups) $BS(k, l)$  where $(|k|-1)(|l|-1)\geq 2$, satisfies Theorem \ref{thm: main one}. In fact, all $B(k, l)$ can be written as $\pi_1(\CG, v)$, in which $\CG=(\Gamma=(V, E), G)$ such that $V=\{v\}$ and $E=\{e\}$ with $s(e)=r(e)=v$ and $|k|=|\Sigma_e|=[G_{v}: \alpha_e(G_{e})]$ as well as $|l|=|\Sigma_{\bar{e}}|=[G_{v}: \alpha_{\bar{e}}(G_{e})]$.  First, $1e$ is a desirable repeatable element. Then using the same argument in Example \ref{eg: new1}, If $|k| ,|l|\geq 2$ then any $\xi$ flows to any $e$ or $\bar{e}$ and thus the action $\beta$ is minimal. In addition, in this case $v\partial X_{\CG}$ is infinite (see e.g., \cite[Example 2.14(E2)]{B-M-P-S-T}). Finally, $\CG$ is unimodular if and only if $|k|\neq |l|$.
\end{eg}

\begin{eg}($n$-circle)\label{eg: new3}
One may generalize the construction in Example \ref{eg: new2} by considering the GBS graph $\Gamma=(V, E)$ in which $V=\{v_1, \dots, v_n\}$ and $E=\{e_1,\dots, e_n\}$ such that $e_i=(v_{i}, v_{i+1})$ for $1\leq i\leq n-1$ and $e_n=(v_n, v_1)$. Actually, $\Gamma$ is exactly a circle with $n$ vertices and $n$ edges. From this viewpoint, the graph for Example \ref{eg: new2} is exactly a $1$-circle. We now assume each $|k_{e_i}|$ and $|k_{\bar{e}_i}|\geq 2$ and it is not hard to see that in this case the boundary is infinite.

First, a path of the form $g_1e_1\dots g_ne_n$ is repeatable. Then, to verify minimality, similar to the argument in Example \ref{eg: new1}, since only $e_i, \bar{e}_i$ for $i=1, \dots, n$ may occur in any infinite path, for $e_1$, the branch \[1e_11e_2\dots1e_ng\bar{e}_n1\bar{e}_{n-1}\dots1\bar{e}_1\dots,\] where $g\neq 1_{v_1}$, witnesses that any infinite path $\xi\in \partial X_\CG$ flows to $e_1$. By symmetry and the fact that all $|k_{e_i}|$ and $|k_{\bar{e}_i}|\geq 2$, we have $\xi$ flows to $e_i$ for any $\xi\in \partial X_\CG$ and any $e_i$ and $\bar{e}_i$ for $i=1, \dots, n$. This establish the minimality of the action. Now since there is only one circle in the graph $\Gamma$, the $\CG$ is not unimodular if and only if $q=\prod^n_{i=1} k_{e_i}/k_{\bar{e}_i}\neq \pm1$. Therefore, with a proper assignment of all $k_{e_i}$ and $k_{\bar{e}_i}$ for $1\leq i\leq n$ such that 
\begin{enumerate}
\item all $|k_{e_i}|$ and $|k_{\bar{e}_i}|\geq 2$ and
\item $q=\prod^n_{i=1} k_{e_i}/k_{\bar{e}_i}\neq \pm1$, 
\end{enumerate}
the group $\pi_1(\CG, v)$ satisfy Theorem \ref{thm: main one}.
\end{eg}

\begin{eg}(wedge sums of $n$-circles)\label{eg: new4}
	To be even more general, we consider the graph $\Gamma=(V, E)$, which is a wedge sum of two $n$-circles, which means
	\begin{enumerate}
		\item $V=V_1\cup V_2$ such that there is a $v\in V_1\cap V_2$ and $(V_1\setminus\{v\})\cap (V_2\setminus\{v\})=\emptyset$;
		\item $E=E_1\sqcup E_2$ and
		\item the subgraph $\Gamma_i=(V_i, E_i)$ is a $n$-circle for $i=1, 2$.
	\end{enumerate}
Similar work would show that $\pi_1(\CG, v)$ still satisfies Theorem \ref{thm: main one} for proper choices of $k_{e}$ and $k_{\bar{e}}$ for $e\in E$. We also assume all $|k_{e}|$ and $|k_{\bar{e}}|\geq 2$ for $e\in E$. Then, first, each $n$-circle yields a repeatable path and the boundary $v\partial X_\CG$ is infinite.

Now we write $E_1=\{e_1,\dots, e_n\}$ and $E_2=\{f_1,\dots, f_n\}$.  Considering the directions, without loss of any generality, one may assume
\begin{enumerate}
	\item $r(e_1)=r(f_1)=v$;
	\item $s(e_i)=r(e_{i+1})$ and $s(f_i)=r(f_{i+1})$ for $1\leq i\leq n-1$;
	\item $s(e_{n})=r(e_1)$ and $s(f_{n})=r(f_1)$.
\end{enumerate}  
	  Then since edges may appear in any infinite path are just all $e_k, \bar{e}_k$ and $f_l, \bar{f}_l$,  for any $e_i$, the following infinite reduced word \[1e_i1e_{i+1}\dots1e_n1f_1\dots1f_n1e_11e_2\dots1e_{i-1}g\bar{e}_{i-1}1\bar{e}_{i-2}\dots1\bar{e_1}1\bar{f}_n\dots1\bar{f}_11\bar{e}_n\dots1\bar{e}_{i}\dots,\]
	  where $g\neq 1$, witnesses that any infinite branch $\xi\in \partial X_\CG$ flows to $e_i$.  Then the same argument shows that any infinite branch $\xi$ flows to any $e$ or $\bar{e}\in E$. Therefore the action of $\pi_1(\CG, v)$ on $v\partial X_\CG$ is minimal. Finally, for $n$-circles $\Gamma_1$ and $\Gamma_2$,  define $q_1=\prod^n_{i=1} k_{e_i}/k_{\bar{e}_i}$ and $q_2=\prod^n_{i=1} k_{f_i}/k_{\bar{f}_i}$. Then $\CG$ is not unimodular if and only if $q_1$ or $q_2\neq \pm1$ or $q^{\pm 1}_1\cdot q^{\pm1 }_2\neq \pm1$. Combining this, for a wedge sum of $n$-circles $\Gamma=(V, E)$, if we have
	\begin{enumerate}
		\item $|k_e|, |k_{\bar{e}}|\geq 2$ for any $e\in E$ and 
		\item $q_1$ or $q_2$ or $q^{\pm 1}_1\cdot q^{\pm 1}_2\neq \pm 1$;
	\end{enumerate}
then $\pi_1(\CG, v)$ satisfies Theorem \ref{thm: main one}. To deal with wedge sums of $n$-circles for $m$-times, i.e., the underlying graph $\Gamma=(V, E)$ contains $m$ subgraph $\Gamma_i=(V_i, E_i)$, for $1\leq i\leq m$, which are all $n$-circles and there is exactly one common vertex $v$ in all $V_i$ as our base vertex. for each such $\Gamma_i$, we can define corresponding $q_i\in \Q$. Using the same argument, we have if
\begin{enumerate}
	\item $|k_e|, |k_{\bar{e}}|\geq 2$ for all $e\in E$ and 
	\item there is a finite set $F\subset \{1, \dots, m\}$ such that $\prod_{i\in F}q^{\pm 1}_i\neq \pm1$,
\end{enumerate}  
then $\pi_1(\CG, v)$ satisfies Theorem \ref{thm: main one}.
\end{eg}

\begin{rmk}\label{rmk: final}
	We finally remark that GBS group $\pi_1(\CG, v)$ in Example \ref{eg: new3} and \ref{eg: new4} are not isomorphic to a BS group $BS(k, l)$ in Example \ref{eg: new2} whenever $n\geq 2$ or $m\geq 2$.  Even the isomorphism problem of GBS groups have not been fully solved, in our specific cases, there is a way to tell this. We refer readers to \cite{C-F} First, in general, if two non-elementary (see \cite[Section 2.2]{C-F} for its definition) GBS groups $G$ and $H$ are isomorphic, then their corresponding Bass-Serre trees are in the same \textit{deformation space} in the sense of \cite[Definition 2.1]{C-F}, which implies that there is a path from the underlying graph $\Gamma_G$ to $\Gamma_H$ by using moves of three types defined in \cite[Definition 2.2]{C-F}. Moreover, the first Betti numbers of $\Gamma_G$ and $\Gamma_H$ are same.
	
	In our case, first, we consider wedge sums of $n$-circles. Let $\CG_1=(\Gamma_1, G )$ such that $\Gamma_1=(V_1, E_1)$ is a wedge sum of $m$ summands in Example \ref{eg: new4}. Also let $\CG_2=(\Gamma_2, G)$ such that $\Gamma_2$ be the graph of a non-degenerated BS group in Example \ref{eg: new2}. First, the corresponding Bass-Serre trees are non-elemenatry because $\pi_1(\CG_1, v_1)$ are $\pi_1(\CG_2, v_2)$ are not amenable. Then observe their first Betti numbers satisfying $b(\Gamma_1)=m\neq 1=b(\Gamma_2)$, which implies that $\pi_1(\CG_1, v_1)$ is not a non-degenerated BS group.
	
	In the case of $n$-circle, where $n\geq 2$, let $\CG_3=(\Gamma_3, G)$ in which $\Gamma_3$ is the graph of a $n$-circle in Example \ref{eg: new3}. Also the Bass-Serre threes of $\Gamma_3$  and $\Gamma_2$ are non-elementary and reduced in the sense of \cite[Definition 2.1]{C-F}. Then if $\pi_1(\CG_3, v_3)$ is a non-degenerated BS group , then, as we mentioned above, there is a path  of \textit{moves} of the three types defined in \cite[Definition 2.2]{C-F}  from the $\Gamma_2$, which is a $1$-circle to the graph $\Gamma_3$, which is a $n$-circle such that any intermediate graph is still reduced by \cite[Theorem 2.5]{C-F}. However, the only possible first move is a $\mathscr{A}$-move, which happens exactly when the $1$-circle $\Gamma_2$ is a \textit{strict vitural ascending loop}. Otherwise, there is no transformation of  $\Gamma_2$ by using these types of moves. After the first possible $\mathscr{A}$-move, the next moves have to be the cominations of \textit{slide}-moves and induction moves, which however, either keep the graph structure, which is a ``lollipop'' or yield a non-reduced graph. This is a contradiction and thus establish that no groups $\pi_1(\CG_2, v_2)$ of $n$-circles in Example \ref{eg: new3} are BS groups in Example \ref{eg: new2} whenever $n\geq 2$.  
\end{rmk}

We now denote by $\CC$ the class of all groups of graph of groups satisfying Theorems \ref{thm: boundary action} and \ref{thm: main one}. As a summary, we have the following result.

\begin{thm}\label{eg: main4}
Still write $\CG=(\Gamma, G)$, the class $\CC$ includes the following groups.
\begin{enumerate}
\item $C^*$-simple $\pi_1(\CG, v)$ in which  $\CG$ satisfies assumptions (1)-(3) of Theroem \ref{thm: boundary action} and each $G_e$ is amenable. This includes Example \ref{eg: new1} In particular, this includes $G*F$ such that $(|G|-1)(|F|-1)\geq 2$.
\item $C^*$-simple GBS groups $\pi_1(\CG, v)$ appeared in Theorem \ref{thm: main one}.  This includes non-degenerated $BS(k, l)$ where $(|k|-1)(|l|-1)\geq 2$ in Example \ref{eg: new2} and certain GBS groups of $n$-circles in Example \ref{eg: new3} as well as some GBS groups of wedge sum of $n$-circles for $m$ times in Example \ref{eg: new4}. In addition, if $n\geq 2$ or $m\geq 2$, these are not non-degenerated BS groups.
\end{enumerate} 
\end{thm}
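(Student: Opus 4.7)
The statement is essentially a catalogue theorem: it asserts that certain explicit families of fundamental groups of graphs of groups lie in the class $\CC$, where $\CC$ is defined as the class of groups satisfying the hypotheses of Theorems \ref{thm: boundary action} or \ref{thm: main one}. So the plan is not to develop new machinery, but to verify the hypotheses for each family and invoke the relevant earlier result.

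For part (1), membership of $C^*$-simple $\pi_1(\CG,v)$ with amenable edge groups and $\CG$ satisfying (1)--(3) of Theorem \ref{thm: boundary action} is tautological by the definition of $\CC$. The substantive claim is that any free product $G*F$ with $(|G|-1)(|F|-1)\geq 2$ fits the framework. To see this, I would realize $G*F=\pi_1(\CG,v)$ where $\CG=(\Gamma,G)$ has $V=\{u,v\}$, one geometric edge $e=(u,v)$, trivial edge group, $G_v=G$ and $G_u=F$, so that $n_e=|G|$ and $n_{\bar e}=|F|$. Then Example \ref{eg: new1} (whose internal check I would reproduce briefly: choose $g\in G\setminus\{1\}$, $h\in F\setminus\{1\}$ and form the repeatable path $ge h\bar e$; argue infinite boundary because one of $n_e,n_{\bar e}$ is $\geq 3$; check flow-to-$e$ minimality since the only edges are $e,\bar e$) shows hypotheses (1)--(3) of Theorem \ref{thm: boundary action}. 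Amenability of $G_e=\{1\}$ is trivial, and $C^*$-simplicity of $G*F$ under the non-degeneracy condition is classical (e.g.\ \cite{Harpe}).

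For part (2), the GBS examples are already verified in the paper: Example \ref{eg: new2} covers $BS(k,l)$, Example \ref{eg: new3} covers $n$-circles with a suitable assignment of the integers $k_e, k_{\bar e}$, and Example \ref{eg: new4} covers wedge sums of $n$-circles for $m$ factors. For each, I would simply summarize the three checks that were done in those examples, namely (i) infiniteness of $v\partial X_\CG$ (which in each case reduces to having some $n_e\geq 2$ and at least one $\geq 3$, or to the existence of sufficiently many branches), (ii) existence of a repeatable path (for $BS(k,l)$ the single edge $1e$ suffices; for the $n$-circle take the loop $1 e_1 1 e_2 \cdots 1 e_n$; for wedge sums, any loop of one summand), (iii) the flow-to-$e$ condition via explicit infinite reduced words exhibited in those examples, and (iv) non-unimodularity of $\CG$ via the modular quotient $q$. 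Having verified (1)--(4) of Theorem \ref{thm: main one}, that theorem directly yields $C^*$-simplicity and membership in $\CC$.

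The final clause of (2) --- that for $n\geq 2$ or $m\geq 2$ the groups produced are not themselves of the form of a non-degenerated Baumslag--Solitar group --- is the one part that is not a mere verification, and I would treat it as the main obstacle. The plan here is the argument already sketched in Remark \ref{rmk: final}: appeal to the classification of non-elementary GBS groups via deformation spaces of their Bass--Serre trees in the sense of \cite{C-F}. Concretely, I would observe first that the Bass--Serre trees attached to the graphs of groups in Examples \ref{eg: new3} and \ref{eg: new4} are non-elementary (because the groups are non-amenable) and reduced. An isomorphism with a $BS(k,l)$ would force a path of elementary moves (slide, induction, $\mathscr{A}$-move) between the underlying reduced labelled graphs, preserving the first Betti number. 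For the wedge of $m\geq 2$ circles the Betti number is $m\neq 1$, giving an immediate contradiction. For a single $n$-circle with $n\geq 2$ the Betti numbers agree, so I would have to rule out such a path move by move: the only move that can begin from the $1$-circle $\Gamma_2$ is an $\mathscr{A}$-move, which can only be applied if $\Gamma_2$ is a strict virtual ascending loop; and subsequent slide and induction moves either preserve the ``lollipop'' shape or destroy reducedness. This blocks the $n$-circle from appearing in the same deformation space, completing the proof.
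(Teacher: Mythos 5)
Your proposal is correct and follows essentially the same route as the paper: the paper offers no separate proof of this theorem but presents it as a summary of Examples \ref{eg: new1}--\ref{eg: new4} together with Theorems \ref{thm: boundary action} and \ref{thm: main one}, and the non-isomorphism clause is handled exactly as in Remark \ref{rmk: final} via the Clay--Forester deformation-space argument (Betti number for wedge sums, the move-by-move obstruction for a single $n$-circle). Your verification of each hypothesis matches the paper's checks, so there is nothing further to add.
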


We finally remark that all groups in $\CC$ owns a $2$-paradoxical towers in the sense of  \cite{G-G-K-N}, from which, we have the following application.
\begin{thm}\label{thm: final}
Let $\pi_1(\CG, v)$ be the fundamental group appeared in $\CC$, which in particular, contains all examples recorded in Theorem \ref{eg: main4}. Let $H$ be another countable discrete group. Suppose $\pi_1(\CG, v)\times H\curvearrowright X$ is a purely infinite topological amenable minimal topologically free action on a compact metric space $X$ . Then its reduced crossed product is a UCT unital Kirchberg algebra and thus Classifiable by its Elliott invariant.
\end{thm}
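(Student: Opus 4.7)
The plan is to verify that all the hypotheses of the Kirchberg–Phillips classification theorem hold for the reduced crossed product by combining the assumed properties of the action with standard consequences of topological amenability and with Theorem \ref{thm: pure inf main}. Writing $G = \pi_1(\CG, v) \times H$, by hypothesis the action $\alpha \colon G \curvearrowright X$ is topologically amenable, minimal, topologically free, and purely infinite, while $X$ is compact and metrizable; these are exactly the ingredients I need.

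First, I would observe that topological amenability of $\alpha$ implies that $A := C(X) \rtimes_r G$ is nuclear (cf.\ \cite{B-O}), and by Tu's theorem \cite{Tu} it satisfies the UCT. Since $X$ is compact, $A$ is unital, and since $X$ is metrizable while $G$ is countable, $A$ is separable. Next, I would apply Theorem \ref{thm: pure inf main}(1): $G$ is countable discrete and infinite (since $\pi_1(\CG, v) \in \CC$ is non-amenable, hence infinite), and the action $\alpha$ is purely infinite, minimal, and topologically free, so the theorem delivers that $A$ is simple and strongly purely infinite.

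Combining these two steps, $A$ is a unital, separable, simple, nuclear, purely infinite $C^*$-algebra satisfying the UCT, that is, a unital Kirchberg algebra satisfying the UCT. The classifiability of $A$ by its Elliott invariant is then the Kirchberg–Phillips classification theorem (see \cite{Phillips}). The proof meets no real obstacle: the substantive content is already encapsulated in the construction of the class $\CC$ (so that $\pi_1(\CG, v)$, and hence any product $\pi_1(\CG, v) \times H$, provides a rich supply of acting groups) and in Theorem \ref{thm: pure inf main}; the present statement merely records how these combine with general classification machinery to yield a broad family of classifiable unital Kirchberg algebras.
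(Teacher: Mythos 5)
Your proof is correct as a verification of the statement as literally written, but it takes a genuinely different route from the paper. The paper's proof is a two-line appeal to the paradoxical-towers machinery of \cite{G-G-K-N}: it notes that every group in $\CC$ admits $2$-paradoxical towers (in the sense of \cite[Definition A]{G-G-K-N}, via the strong boundary actions constructed in Theorems \ref{thm: boundary action} and \ref{thm: main one}) and then invokes \cite[Theorem B]{G-G-K-N}, which applies to products $\pi_1(\CG,v)\times H$ and derives pure infiniteness of the crossed product from the group-theoretic towers alone. Your argument instead takes the hypothesis that the action itself is purely infinite at face value and feeds it directly into Theorem \ref{thm: pure inf main}(1), together with nuclearity from amenability and the UCT from Tu's theorem. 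The trade-off is worth noting: your route is more elementary and self-contained within the paper, but it makes the hypothesis $\pi_1(\CG,v)\in\CC$ essentially vacuous (you use it only to conclude the acting group is infinite), and indeed your argument proves the same conclusion for an arbitrary countable infinite group. The paper's route is what justifies the stronger claim made in the introduction --- that \emph{any} minimal topologically free amenable action of such a product group yields a Kirchberg algebra, with pure infiniteness of the action not assumed but obtained for free from the paradoxical towers of $\pi_1(\CG,v)$. So your proof establishes the theorem as stated, while the paper's proof establishes the version in which ``purely infinite'' is a conclusion rather than a hypothesis; if the listed pure infiniteness of the action is a redundancy the authors intended to drop, your argument would no longer suffice.
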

\begin{proof}
	Theorem \ref{thm: main one} actually implies that $\pi_1(\CG, v)$ admits $n$-paradoxical towers in the sense of \cite[Definition A]{G-G-K-N}. Then simply apply \cite[Theorem B]{G-G-K-N}.
\end{proof}

\section{Acknowledgement}
The authors should like to thank Johanna Mangahas, Jianchao Wu, Yulan Qing and Abdul Zalloum for helpful discussion and comments. We also would like to thank Tron Omland for useful comments.

\end{document}